\titleformat{\section}{\LARGE\center\bfseries\scshape}{\thesection.}{.7em}{}
\titlespacing*{\section}{0pt}{3.5ex plus 0ex minus 0ex}{1.5ex plus 0ex}
\titleformat{\subsection}{\Large\center\bfseries\scshape}{\thesubsection.}{.7em}{}
\titlespacing*{\subsection}{0pt}{3.5ex plus 0ex minus 0ex}{1.5ex plus 0ex}
\g@addto@macro\normalsize{%
  \setlength\abovedisplayskip{10pt}
  \setlength\belowdisplayskip{10pt}	
  \setlength\abovedisplayshortskip{10pt}
  \setlength\belowdisplayshortskip{10pt}}
\setlist{nolistsep}
\newtheoremstyle{plain}{1mm}{2mm}{\slshape}{}{\color{Blue}\bfseries}{.}{.5em}{}
\newtheoremstyle{definition}{1mm}{2mm}{}{}{\color{Blue}\bfseries}{.}{.5em}{}
\theoremstyle{plain}
\newtheorem*{Thm}{Theorem}	
\newtheorem{Theorem}{Theorem}[section]
\newtheorem{Lemma}[Theorem]{Lemma}
\newtheorem{Proposition}[Theorem]{Proposition}
\newtheorem{Corollary}[Theorem]{Corollary}
\theoremstyle{definition}
\newtheorem{Definition}[Theorem]{Definition}
\newtheorem{Example}[Theorem]{Example}
\newtheorem{Question}[Theorem]{Question}
\theoremstyle{plain} 
\newcounter{MainTheoremCounter}
\newtheorem{Maintheorem}[MainTheoremCounter]{Theorem}
\newtheorem{Maincorollary}[MainTheoremCounter]{Corollary}
\theoremstyle{plain}
\newtheorem*{namedthm}{\namedthmname}
\newcounter{namedthm}
\definecolor{Scarlet}{rgb}{0.65, 0.10, 0.0}
\definecolor{Blue}{rgb}{0.0, 0.05, 0.39}
\definecolor{Green}{rgb}{0.3, 0.6 ,0.2}
\newcommand{\N}{\mathbb{N}}
\newcommand{\Z}{\mathbb{Z}}
\newcommand{\R}{\mathbb{R}}
\newcommand{\C}{\mathbb{C}}
\newcommand{\Q}{\mathbb{Q}}
\newcommand{\Hilb}{\mathscr{H}}
\newcommand{\define}[1]{\emph{#1}}
\newcommand{\la}{\langle}
\newcommand{\ra}{\rangle}
\renewcommand{\epsilon}{\varepsilon}
\renewcommand{\leq}{\leqslant}
\renewcommand{\geq}{\geqslant}
\renewcommand{\setminus}{{\backslash}}
\renewcommand{\Re}{{\rm Re}}
\renewcommand{\colon}{\nobreak\mskip2mu\mathpunct{}\nonscript\mkern-\thinmuskip{:}\mskip6muplus1mu\relax}
\newcommand{\xbm}{(X,\mathcal{B},\mu)}
\newcommand{\xbmt}{(X,\mathcal{B},\mu,T)}
\newcommand{\mob}{\boldsymbol{\mu}}
\newcommand{\lio}{\boldsymbol{\lambda}}
\newcommand{\tot}{\boldsymbol{\varphi}}
\newcommand{\vep}{\varepsilon}
\newcommand{\sB}{\mathscr{B}}
\newcommand{\cf}{\mathcal{F}}
\newcommand{\1}{\mathbbm{1}}
\newcommand{\cm}{\mathcal{M}}
\newcommand{\cn}{\mathcal{N}}
\newcommand{\ca}{\mathcal{A}}
\renewcommand{\P}{\mathbb{P}}
\newcommand{\T}{\mathbb{T}}
\newcommand{\rfctn}{H}
\newcommand{\s}{F}
\renewcommand{\t}{G}
\newcommand{\G}{\mathbf{G}}
\newcommand{\Oh}{{\rm O}}
\newcommand{\oh}{{\rm o}}
\newcommand{\Hardy}{\mathcal{H}}
\newcommand{\Dzero}{\mathcal{D}^{(1)}}
\newcommand{\Done}[1]{\mathcal{D}^{(#1)}}%_{\text{\normalfont mul}}}
\newcommand{\DtwoA}{\mathcal{E}_{\text{\normalfont c.pt.}}}
\newcommand{\DtwoB}{\mathcal{E}_{\text{\normalfont pol}}}
\newcommand{\Dthree}[1]{\mathcal{E}_{\text{\normalfont c.pt.}}^{(#1)}}
\newcommand{\Dfour}{\mathcal{E}_{\partial}}
\newcommand{\DtwoC}{\mathcal{E}_{\text{\normalfont Jor}}}
\begin{document}
\allowdisplaybreaks
% ================================================================

% =================================================TITLE&ABSTRACT
\title{A generalization of K{\'a}tai's orthogonality criterion with applications
}
\author{V.\ Bergelson\thanks{The first author gratefully acknowledges the support of the NSF under grant DMS-1500575.} \and J.\ Ku\l aga-Przymus\thanks{Research supported by Narodowe Centrum Nauki UMO-2014/15/B/ST1/03736 and the European Research Council (ERC) under the European Union's Horizon 2020 research and innovation programme (grant agreement No 647133 (ICHAOS)).} \and M.\ Lema\'nczyk\thanks{Research supported by Narodowe Centrum Nauki UMO-2014/15/B/ST1/03736 and the EU grant ``AOS'', FP7-PEOPLE-2012-IRSES, No 318910.} \and F.\ K.\ Richter}

\date{\small \today}

\maketitle
\begin{abstract}
\noindent
%We utilize facts, techniques and ideology coming from multiplicative number theory to obtain refinements and enhancements in the theory of uniform distribution involving level sets of multiplicative functions.

We study properties of arithmetic sets coming from multiplicative number theory and obtain applications in the theory of uniform distribution and ergodic theory.
Our main theorem is a generalization of K{\'a}tai's orthogonality criterion. Here is a special case of this theorem:
\begin{Thm}
Let $a\colon\N\to\C$ be a bounded sequence satisfying
\begin{equation*}
\sum_{n\leq x} a(pn)\overline{a(qn)} = \oh(x),~\text{for all distinct primes $p$ and $q$.}
\end{equation*}
Then for any multiplicative function $f$ and any $z\in\C$ the indicator function of the level set $E=\{n\in\N:f(n)=z\}$ satisfies
\begin{equation*}
\sum_{n\leq x} \1_E(n)a(n)=\oh(x).
\end{equation*}
\end{Thm}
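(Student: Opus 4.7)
The classical K\'atai orthogonality criterion states that a bounded sequence $a$ satisfying the hypothesis of the theorem is orthogonal in density to every bounded multiplicative function. My first step would be to leverage this classical result. Observe that if $E$ has density zero, the conclusion follows trivially from $|\sum_{n\leq x}\1_E(n)\, a(n)| \leq \|a\|_\infty\, |E\cap[1,x]| = \oh(x)$, so I would henceforth assume $E$ has positive upper density.

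Since $\1_E$ itself is not multiplicative, I would try one of two complementary strategies. The first is to verify a K\'atai-type orthogonality condition for the twisted sequence $b(n):=\1_E(n)\, a(n)$, so that applying K\'atai's criterion to $b$ with the constant multiplicative function $1$ would yield $\sum_{n\leq x} b(n)=\oh(x)$. This reduces the task to bounding, for each pair of distinct primes $p\neq q$, the correlation
\[
\sum_{n\leq x}\1_E(pn)\1_E(qn)\, a(pn)\,\overline{a(qn)}.
\]
Here the multiplicativity of $f$ is essential: for $n$ coprime to $pq$, the constraints $pn, qn\in E$ translate into $f(p)f(n)=z$ and $f(q)f(n)=z$; when $z\neq 0$ and $f(p)\neq f(q)$ the system is inconsistent and the corresponding set is empty, while in the compatible case these equations confine $n$ to a single level set of $f$. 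The second strategy, useful as a complement, is to approximate $\1_E$ by a finite linear combination of bounded multiplicative functions via Fourier analysis on the group of values of $f$; this works cleanly when $f$ is finite-valued, and in general one reduces to that setting via pretentiousness theory.

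The main obstacle I anticipate is the scenario in which $\1_E(pn)\1_E(qn)$ is supported on a set of positive density, namely the shifted level set $\{f=z/f(p)\}$ when $f(p)=f(q)$, since here the K\'atai hypothesis on $a$ alone is not enough. To handle this I would invoke pretentiousness results of Hal\'asz, Delange and Wirsing: if $\{f=z\}$ has positive density with $z\neq 0$, then $f$ is essentially character-valued on primes, which allows one to approximate $\1_E$ up to small error by a finite linear combination of bounded multiplicative functions (via Fourier analysis on a finite group of roots of unity), and to apply K\'atai's classical criterion to each summand. The case $z=0$ is structurally different: then $E$ is a union of sets of multiples of fixed prime powers on which $f$ vanishes, and an inclusion--exclusion sieve together with K\'atai's criterion along arithmetic progressions completes the argument. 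Controlling the approximation errors uniformly in the relevant parameters (and recombining the various regimes) will be the main technical subtlety.
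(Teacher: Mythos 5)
Your plan correctly isolates the main enemy---the pairs $p\neq q$ with $f(p)=f(q)$, where the correlation of $b=\1_E\cdot a$ lives on a positive-density level set and the hypothesis on $a$ says nothing---but the route you propose around it has a genuine gap at its central step, and it is not the route the paper takes. The paper never approximates $\1_E$ by multiplicative functions. Instead it (i) invokes Ruzsa's theorem (\cref{thm:ST-Ruzsa-3.10}, \cref{cor:density->concentrated function}): if $d(E(f,z))>0$ with $z\neq0$ then $f$ may be replaced by a \emph{concentrated} function, so in particular there is a value $w$ and a set of primes $P$ with $\sum_{p\in P}1/p=\infty$ and $f(p)=w$ for all $p\in P$; and (ii) proves a new orthogonality criterion (\cref{prop:pKOc}) in which multiplicativity of the weight is replaced by the single identity $\1_E(np)=\1_{E_1}(n)$ for $p\in P$, $\gcd(n,p)=1$, where $E_1=\{n:f(n)w=z\}$ is one fixed set independent of $p$. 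Running the Tur\'an--Kubilius/Cauchy--Schwarz bilinear argument only over the concentration primes $p\in P$ then decouples $\1_{E_1}$ from the correlations of $a$, and the hypothesis on $a$ finishes. This ``restrict the averaging to the concentration primes'' idea is exactly what neutralizes the obstacle you identified, and it is absent from your proposal.

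Concerning your own route: the claim that positive density of $\{f=z\}$ makes $f$ ``essentially character-valued on primes'' is not something Hal\'asz--Delange--Wirsing pretentiousness gives you in usable form; the statement you actually need is Ruzsa's, namely that the group $\G$ generated by the concentration points is finite and $\sum_{f(p)\notin\G}1/p<\infty$. Even granting that, the Fourier expansion $\1_{\{f=z\}}=|\G|^{-1}\sum_{\chi}\overline{\chi(z)}\,\chi(f(\cdot))$ is only valid when $f$ is genuinely $\G$-valued; in general one must first condition on the ``exceptional part'' of $n$ (the primes with $f(p)\notin\G$ and the higher prime powers), which is the content of the paper's $S_P/T_P$ decomposition in \cref{lem:dilation-invariant-algebras-contained-in-a(f)} and is precisely the nontrivial work your sketch omits. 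Two smaller points. First, even in your ``inconsistent'' case $f(p)\neq f(q)$, the terms with $\gcd(n,pq)>1$ contribute $\Oh(x/p+x/q)$ to $\sum_{n\leq x}b(pn)\overline{b(qn)}$, which is not $\oh(x)$ for fixed $p,q$, so the classical criterion cannot be applied to $b$ as a black box even there. Second, the case $z=0$ needs no sieve and no ``K\'atai along arithmetic progressions'' (which the hypothesis on $a$ does not provide): $\1_{\{f\neq0\}}$ is itself a bounded multiplicative function, so the classical criterion applied to it and to the constant function $1$ yields $\sum_{n\leq x}\1_{\{f=0\}}(n)a(n)=\oh(x)$ by subtraction.
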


With the help of this theorem one can show that if $E=\{n_1<n_2<\ldots\}$ is a level set of a multiplicative function having positive upper density, then for a large class of sufficiently smooth functions $h:(0,\infty)\to\R$ the sequence $(h(n_j))_{j\in\N}$ is uniformly distributed $\bmod~1$. This class of functions $h(t)$ includes: all polynomials $p(t)=a_kt^k+\ldots+a_1t+a_0$ such that at least one of the coefficients $a_1,a_2,\ldots,a_k$ is irrational, $t^c$ for any $c>0$ with $c\notin \N$, $\log^r(t)$ for any $r>2$, $\log(\Gamma(t))$, $t\log(t)$, and $\frac{t}{\log t}$. The uniform distribution results, in turn, allow us to obtain new examples of ergodic sequences, i.e. sequences along which the ergodic theorem holds.
\end{abstract}
\small
\tableofcontents
\thispagestyle{empty}
\normalsize
% ================================================================

% =========================================================SECTION
\section{Introduction}
\label{sec:intro}
% ================================================================

An arithmetic function $f\colon\N=\{1,2,\ldots,\}\to\C$ is called \define{multiplicative} if $f(1)=1$ and $f(m n)=f(m)\cdot f(n)$ for all relatively prime $m,n\in\N$ (and is called {\em completely multiplicative} if $f(m n)=f(m)\cdot f(n)$ for all $m,n\in\N$).
We start the discussion by formulating the following classical result of Daboussi.

\begin{Theorem}[cf.\ {\cite[Theorem 1]{DD82}}]
\label{thm:DD82-thm.1.}
Let $f\colon\N\rightarrow\C$ be a multiplicative function with $|f(n)|\leq 1$ for all $n\in\N$. Then for all irrational $\theta$,
$$
\sum_{n\leq x} f(n)e(\theta n)=\oh(x),
$$
where $e(x):=e^{2\pi i x}$ for all $x\in\R$.
\end{Theorem}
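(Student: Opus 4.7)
The plan is to deduce the theorem from K\'atai's orthogonality criterion, a classical result whose generalization is announced as the main theorem of this paper. That criterion states: if $a\colon\N\to\C$ is bounded and satisfies
\[
\sum_{n \leq x} a(pn)\overline{a(qn)} = \oh(x)
\]
for every pair of distinct primes $p \neq q$, then $\sum_{n \leq x} f(n) a(n) = \oh(x)$ for every multiplicative $f$ with $|f(n)| \leq 1$. With this criterion in hand, the theorem reduces to a one-line verification.

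First I would set $a(n) := e(\theta n)$, which is bounded by $1$, and check the bilinear hypothesis. For distinct primes $p,q$,
\[
\sum_{n \leq x} a(pn)\overline{a(qn)} \;=\; \sum_{n \leq x} e\bigl((p-q)\theta n\bigr).
\]
Since $\theta$ is irrational and $p-q$ is a nonzero integer, $(p-q)\theta \notin \Z$, hence $e((p-q)\theta) \neq 1$; the partial sum of this geometric series is therefore bounded in modulus by $2/|1-e((p-q)\theta)|$, a constant depending only on $\theta,p,q$ and independent of $x$. In particular it is $\oh(x)$. Invoking K\'atai's criterion with $a(n)=e(\theta n)$ then yields the theorem.

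The only nontrivial content is packaged inside K\'atai's criterion itself, and this is where I expect the real work to be. Its standard proof rests on the Tur\'an--Kubilius inequality: for a typical $n \leq x$, the number of prime divisors of $n$ lying in a fixed window $[P,Q]$ concentrates around its expectation $\sum_{P \leq p \leq Q} 1/p$. Using this concentration one may rewrite $\sum_{n \leq x} f(n)\,a(n)$, up to a small error, as an average over pairs $p,q \in [P,Q]$ of bilinear expressions $f(p)\overline{f(q)} \sum_m a(pm)\overline{a(qm)}$; the bilinear hypothesis makes each such average negligible, and the Tur\'an--Kubilius error tends to $0$ as $P \to \infty$. If a self-contained proof were wanted, assembling this Tur\'an--Kubilius machinery would be the main technical burden; here the rest of the argument collapses to the elementary geometric-sum bound above, and in particular no Diophantine hypothesis on $\theta$ beyond irrationality is needed.
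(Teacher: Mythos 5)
Your deduction is correct and matches the route the paper itself indicates: the paper treats Theorem~\ref{thm:DD82-thm.1.} as a consequence of K\'atai's criterion, and when it derives the generalization (\cref{cor:Katai-for-sets}) from \cref{thm:Katai-for-sets} it says precisely ``by setting $a(n)=e(n\theta)$,'' leaving the geometric-sum verification of \eqref{KC} implicit, which you have spelled out correctly. Your sketch of the K\'atai criterion's own proof via Tur\'an--Kubilius also agrees with the machinery the paper develops (cf.\ \cref{lem:lB6} and \cref{prop:pKOc}).
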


A nice (and shorter) proof of \cref{thm:DD82-thm.1.}, which also yields more general results (for instance $e(\theta n)$ replaced with $e(\theta n^2)$), was later discovered by K{\'a}tai \cite{Katai86}. The following theorem is the main technical result that K{\'a}tai uses to improve Daboussi's result and, in addition, to derive new results in the theory of equidistribution (in particular, it is proved in \cite{Katai86} that for any \define{additive function}\footnote{An arithmetic function $a\colon\N\to\R$ is called \define{additive} if $a(nm)=a(n)+a(m)$ for all $m,n$ with $\gcd(n,m)=1$.\label{ftnt-1}} $a\colon\N\to\R$ and any polynomial $p(t)=a_kt^k+\ldots+a_1t+a_0$ such that at least one of the coefficients $a_1,a_2,\ldots,a_k$ is irrational the sequence $a(n)+p(n)$ is \define{uniformly distributed mod $1$}\footnote{A real-valued sequence $(x_n)_{n\in\N}$ is called \define{uniformly distributed mod $1$} if for all continuous functions $f\colon[0,1)\to\C$ one has
$$\lim_{N\rightarrow\infty}\frac{1}{N}\sum_{n=1}^N f(\{x_n\})=\int_0^1 f(x)\, dx,$$
where for $y\in\R$ the expression $\{y\}$ denotes the fractional part of $y$.\label{ftnt-2}}.).

\begin{Theorem}[K{\'a}tai's orthogonality criterion, see {\cite{Katai86,BSZ13}}]\label{thm:org-Katai}
Let $a\colon\N\to\C$ be a bounded sequence satisfying
\begin{equation}
\label{KC}
\sum_{n\leq x} a(pn)\overline{a(qn)} = \oh(x),~\text{for all distinct primes $p$ and $q$.}
\end{equation}
Then for every multiplicative function $f\colon\N\rightarrow\C$ that is bounded in modulus by $1$, one has
\begin{equation}
\label{eq:kc-conclusion}
\sum_{n\leq x} f(n)a(n)=\oh(x).
\end{equation}
\end{Theorem}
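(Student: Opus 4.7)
The plan is to derive \eqref{eq:kc-conclusion} by combining a Tur\'an--Kubilius style smearing argument with the orthogonality hypothesis \eqref{KC} through Cauchy--Schwarz. Fix a finite set $P$ of primes (to be chosen later), set $A := \sum_{p \in P} 1/p$ and $\omega_P(m) := \#\{p \in P : p \mid m\}$. The Tur\'an--Kubilius inequality gives $\sum_{m \leq x}(\omega_P(m) - A)^2 = \Oh(xA)$, and writing $S(x) := \sum_{n \leq x} f(n)a(n)$ and $\Sigma := \sum_{m \leq x}\omega_P(m)f(m)a(m)$, Cauchy--Schwarz applied to $AS(x) - \Sigma = \sum_m (A - \omega_P(m))f(m)a(m)$ yields $|AS(x) - \Sigma| = \Oh(x\sqrt{A})$. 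Therefore $|S(x)| \leq |\Sigma|/A + \Oh(x/\sqrt{A})$, so it suffices to prove that $|\Sigma|/A = \oh(x)$ along a choice of $P$'s with $A \to \infty$.

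Next I would unfold $\Sigma$ via the change of variables $m = pn$: $\Sigma = \sum_{p \in P}\sum_{n \leq x/p} f(pn)a(pn)$. Multiplicativity of $f$ gives $f(pn) = f(p)f(n)$ whenever $p \nmid n$, and the total contribution of the $p \mid n$ terms is $\Oh(\sum_{p}x/p^2) = \Oh(x)$, which is negligible after division by $A$. Hence, up to errors of size $\Oh(x)$, $\Sigma$ coincides with $\tilde{\Sigma} := \sum_{n} f(n)c_n$ where $c_n := \sum_{p \in P,\, pn \leq x} f(p)a(pn)$. Applying Cauchy--Schwarz in $n$ gives $|\tilde{\Sigma}|^2 \leq x \sum_n |c_n|^2$, and expanding the square of $c_n$ while swapping the order of summation produces the key identity
\begin{equation*}
\sum_n |c_n|^2 = \sum_{p, q \in P} f(p)\overline{f(q)} \sum_{n \leq x/\max(p, q)} a(pn)\overline{a(qn)}.
\end{equation*}
The diagonal $p = q$ contributes at most $\sum_{p \in P} \Oh(x/p) = \Oh(xA)$, while for $p \neq q$ the hypothesis \eqref{KC} forces the inner sum to be $\oh(x)$ as $x \to \infty$ with $P$ fixed.

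Summing the off-diagonal over the $|P|^2$ pairs, one obtains $\sum_n |c_n|^2 = \Oh(xA) + |P|^2 \cdot \varepsilon(x) \cdot x$ for some $\varepsilon(x) \to 0$ depending on $P$; hence $|\tilde{\Sigma}|/x = \Oh(\sqrt{A}) + \Oh(|P|\sqrt{\varepsilon(x)})$. Dividing by $A$ and specializing $P$ to the set of all primes at most $L$ (so that $A(L) \sim \log\log L$ as $L \to \infty$), one sees that for each fixed $L$, $\limsup_{x \to \infty} |S(x)|/x = \Oh(1/\sqrt{A(L)})$, since the $|P|\sqrt{\varepsilon(x)}/A$ term vanishes as $x \to \infty$ with $L$ fixed; letting $L \to \infty$ gives the desired conclusion. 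The main obstacle I expect is the delicate management of the two limits: the hypothesis supplies off-diagonal decay only as $x \to \infty$ for each fixed $P$, whereas the Tur\'an--Kubilius error requires $A \to \infty$, so $x$ must be sent to infinity \emph{before} $L$. A secondary technicality is that the inner sums over $n$ are constrained to $n \leq x/\max(p, q)$ rather than $n \leq x$, so one must align this with \eqref{KC} by writing $\sum_{n \leq x/\max(p,q)}$ as $\sum_{n \leq y}$ with $y = x/\max(p,q) \to \infty$ uniformly for $p,q \in P$ fixed.
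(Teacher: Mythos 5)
Your argument is correct and is essentially the paper's own method: the Tur\'an--Kubilius smearing, the two applications of Cauchy--Schwarz, and the bilinear expansion against hypothesis \eqref{KC} are exactly the proof of \cref{prop:pKOc} (specialized to $\Hilb=\C$, $\s=\t_1=\t_2=f$, $\rfctn=1$), the only cosmetic difference being that you fix a finite prime set $P$ and take iterated limits ($x\to\infty$ first, then $L\to\infty$), whereas the paper fuses the two limits via a slowly growing cutoff $y=y(x)$. Your handling of the limit order and of the truncation $n\leq x/\max(p,q)$ is exactly right; the only negligible omission is the $\Oh(|P|^2)$ term in Tur\'an--Kubilius (cf.\ \cref{lem:lB6}), which is harmless since $P$ is fixed when $x\to\infty$.
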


Given a multiplicative function $f\colon\N\to\C$ and a point $z\in\C$ let $E(f,z)$ denote the set of solutions to the equation $f(n)=z$, i.e.,
\begin{equation*}
%\label{e_0}\tag{C0}
E(f,z):=\{n\in\N: f(n)=z\}.
\end{equation*}
We will refer to $E(f,z)$ as a \define{level set} of $f$.
While $E(f,z)$ is defined by means of the multiplicative structure of $\N$, it possesses many interesting properties from the viewpoint of additive integer arithmetic.

Our main result is a generalization of K{\'a}tai's orthogonality criterion in which the multiplicative function $f$ is replaced by the indicator function of a level set of $f$.
Actually, our result holds for sets that are more general than sets of the form $E(f,z)$.

%For the case \eqref{e_1} no restrictions on the multiplicative functions need to be imposed; however, for \eqref{e_2} we need to impose certain restrictions. We have the following definition.

\begin{Definition}
\label{def:c1c2}
\
\begin{description}
%\item[{\big(Definition of $\Dzero$\big).}]
%We define $\Dzero$ to be the collection of all sets of the from $E(f,z):=\{n\in\N:f(n)=z\}$, where $f$ is an arbitrary multiplicative function and $z$ is an arbitrary complex number;
\item[{\big(Definition of $\Done{r}$\big).}]
For $r\in\N$ let $\Done{r}$ denote the collection of all sets of the from
$$E(f_1,\ldots,f_r,z_1,\ldots,z_r):=\{n\in\N:f_1(n)=z_1,\ldots,~f_r(n)=z_r\},$$
where $f_1,\ldots,f_r$ are arbitrary multiplicative functions and $z_1,\ldots,z_r$ are arbitrary complex numbers. It is clear that $\Done{1}\subset\Done{2}\subset\ldots$; we set $\Done{\infty}:=\bigcup_{r=1}^{\infty}\Done{r}$.
%A function $f\colon\N\to\C^r$ is called \define{multiplicative} if each coordinate function $f_i\colon\N\to\C$ of $f=(f_1,\ldots,f_r)$ is multiplicative. We denote by $\Done$ the collection of all sets of the from $E(f,z):=\{n\in\N:f(n)=z\},$ where $r\in\N$ is arbitrary, $f:\N\to\C^r$ is multiplicative and $z=(z_1,\ldots,z_r)\in\C^r$;
\item[{\big(Definition of $\DtwoA$\big).}]
A point $z\in\C$ is called a \define{concentration point} for $f\colon\N\to\C$ if $\sum_{p~\text{prime}\atop f(p)=z}\frac{1}{p}=\infty$ (cf.\ {\cite[Definition 3.9]{Ruzsa77}}).
We define $\DtwoA$ to be the collection of all sets of the from
$E(f,K):=\{n\in\N:f(n)\in K\}$, where $K$ is an arbitrary subset of $\C$ and $f\colon\N\to\C$ is a multiplicative function possessing at least one concentration point.
\item[{\big(Definition of $\DtwoB$\big).}]
A set $K\subset \C$ is an \emph{elementary set in polar coordinates} if it can be expressed as a finite union of sets of the form $\{re^{2\pi i\varphi}: r\in I_1,\varphi\in I_2 \}$, where $I_1$ and $I_2$ are (open, closed or half-open) intervals in $\R$.
Let $\DtwoB$ denote the collection of all sets of the form
$E(f,K):=\{n\in\N:f(n)\in K\}$, where $K$ is an elementary set in polar coordinates and $f$ is a multiplicative function bounded in modulus by $1$ and satisfying $\lim_{N\to\infty}\frac{1}{N}\sum_{n=1}^N|f(n)|\neq 0$ (note that this limit always exists by Wirsing's mean value theorem, see \cref{thm:wirsing} below).
\end{description}
\end{Definition}

%While $\Dzero$ is a special case of $\Done{r}$, we give it a separate name since some of our results apply only to the $r=1$ case.

The classes $\Done{\infty}$, $\DtwoA$ and $\DtwoB$ contain numerous classical sets originating in multiplicative number theory.
The following (admittedly long) list is comprised of representative examples of sets from these classes which will frequently appear in the next sections of the paper. A more detailed explanation why the sets in \ref{item:eg:multiplicative-fiber-0} - \ref{item:eg:multiplicative-fiber-7} below are indeed elements of $\Done{\infty}$, $\DtwoA$ or $\DtwoB$ is provided at the end of Subsection \ref{sec:a(f)} (see \cref{example:mf-d3d4}).

\begin{Example}\label{example:mf}
\
\begin{enumerate}
[label=\text{Ex.\ref{example:mf}.\arabic*:}, ref=\text{Ex.\ref{example:mf}.\arabic*}, leftmargin=*]
\item
\label{item:eg:multiplicative-fiber-0}
The set $Q$ of \define{squarefree numbers} belongs to $\Dzero$.
\item
\label{item:eg:multiplicative-fiber-2}
Let $\Omega(n)$ denote the number of prime factors of $n$ (counted with multiplicities) and $\omega(n)$ denote the number of distinct prime divisors of $n$ (without multiplicities). For any $b_1,b_2,r_1,r_2\in\N$, the sets
\begin{eqnarray*}
S_{\Omega,b_1,r_1}&:=&\{n\in\N: \Omega(n)\equiv r_1\bmod b_1\}
%\quad\text{and}\quad
\\
S_{\omega,b_2,r_2}&:=&\{n\in\N: \omega(n)\equiv r_2\bmod b_2\}
\end{eqnarray*}
belong to $\Dzero$ and the sets
$$
S_{\omega,b_1,r_1}\cap S_{\Omega,b_2,r_2}=\{n\in\N: \omega(n)\equiv r_1\bmod b_1,~\Omega(n)\equiv r_2\bmod b_2\}
$$
%\end{eqnarray*}
belong to $\Done{2}$.
\item
\label{item:eg:multiplicative-fiber-4}
For any irrational $\alpha>0$ and any set $J\subset [0,1)$, the sets
\begin{eqnarray*}
S_{\Omega,\alpha,J}&:=&\{n\in\N: \Omega(n)\alpha\bmod 1 \in J\}
%\quad\text{and}\quad
\\
S_{\omega,\alpha,J}&:=&\{n\in\N: \omega(n)\alpha\bmod 1\in J\}
\end{eqnarray*}
belong to $\DtwoA$ (cf.\ \cite{FH15arXiv}).
\item\label{item:eg:multiplicative-fiber-1}
For any $x\in(0,1)$, the set $\Phi_x:=\{n\in\N: \tot(n)<xn\}$ belongs to $\DtwoB$, where $\tot(n)$ is \define{Euler's totient function} (cf.\ \cite{Schoenberg28}).
\item\label{item:eg:multiplicative-fiber-5}
The set of \define{abundant numbers} $\mathscr{A}:=\{n\in\N:\boldsymbol{\sigma}(n)>2n\}$ and the set of \define{deficient numbers} $\mathscr{D}:=\{n\in\N:\boldsymbol{\sigma}(n)<2n\}$ belong to $\DtwoB$; here $\boldsymbol{\sigma}(n):=\sum_{d\mid n} d$ denotes the \define{sum of divisors function} (cf.\ \cite{Davenport33}). 
\item\label{item:eg:multiplicative-fiber-6}
Let $\boldsymbol{\tau}(n):=\sum_{d\mid n} 1$ be the \define{number of divisors function}.
For $b,r\in\N$ with $\gcd(r,b)=1$, the set
\begin{eqnarray*}
%S_{\boldsymbol{\sigma},b,r}&:=&\{n\in\N: \boldsymbol{\sigma}(n)\equiv r\bmod b\},
%\\
S_{\boldsymbol{\tau},b,r}&:=&\{n\in\N: \boldsymbol{\tau}(n)\equiv r\bmod b\}
%\\
%S_{\tot,b,r}&:=&\{n\in\N: \tot(n)\equiv r\bmod b\},
\end{eqnarray*}
belongs to $\Done{t}$, where $t$ equals the number of generators of the group $(\Z/b\Z)^*$. More generally, $\{n\in\N: f(n)\equiv r\bmod b\}\in\Done{t}$ for any multiplicative function $f\colon\N\to\N$ (cf. \ref{item:eg:multiplicative-fiber-6-d3d4} in Subsection \ref{sec:a(f)}).
\item\label{item:eg:multiplicative-fiber-7}
If $E$ belongs to either $\Done{\infty}$, $\DtwoA$ or $\DtwoB$, then for any \define{multiplicative set}\footnote{A set $M\subset\N$ is called multiplicative if $1\in M$ and for all $m,n\in\N$ with $\gcd(m,n)=1$ one has $m\cdot n\in M$ if and only if $m\in M$ and $n\in M$. Equivalently, a set $M$ is multiplicative if and only if its indicator function $\1_M$ is a multiplicative function.\label{footnote-p3}} $M$ the set $E\cap M$ again belongs to $\Done{\infty}$, $\DtwoA$ or $\DtwoB$ respectively. Clearly, any subsemigroup of $(\N,\cdot)$ containing $1$ is a multiplicative set. Other examples include the set of $k$-free numbers.
\end{enumerate}
\end{Example}

\begin{Maintheorem}[A generalization of K{\'a}tai's orthogonality criterion]
\label{thm:Katai-for-sets}
Let $a\colon\N\to\C$ be a bounded sequence satisfying
\begin{equation*}
%\label{KC-2}
\sum_{n\leq x} a(pn)\overline{a(qn)} = \oh(x),~\text{for all distinct primes $p$ and $q$.}
\end{equation*}
If $E\subset\N$ belongs to one of the classes $\Done{\infty}$, $\DtwoA$ or $\DtwoB$ then
\begin{equation}
\label{eq:kc-conclusion-sets}
\sum_{n\leq x} \1_{E}(n)a(n)=\oh(x).
\end{equation}
\end{Maintheorem}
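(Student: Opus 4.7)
The plan is to reduce all three cases to the original K\'atai orthogonality criterion \cref{thm:org-Katai}, which already disposes of $\sum_{n\le x}a(n)g(n)=\oh(x)$ whenever $g$ is a bounded multiplicative function. The unifying idea is that, in each class, the indicator $\1_E$ can be approximated in an ``$L^1$-on-average'' sense by a finite linear combination of bounded multiplicative functions of the shape $f(n)^i\overline{f(n)}^j$ (or products of such across several $f_i$); \cref{thm:org-Katai} then kills each summand, and the approximation error is controlled via existence of a limit distribution for $f$. I would treat the three classes in order of increasing technical difficulty.

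The $\DtwoB$ case is the model. Since $|f|\le 1$, every $f(n)$ lies in the closed unit disk $\overline{\mathbb{D}}$, and Wirsing's theorem (\cref{thm:wirsing}), combined with the hypothesis $\frac{1}{N}\sum_{n\le N}|f(n)|\not\to 0$, ensures that the empirical distributions $\mu_N:=\frac{1}{N}\sum_{n\le N}\delta_{f(n)}$ converge weak-$\ast$ to a Borel probability measure $\mu$ on $\overline{\mathbb{D}}$. Since $K$ is elementary in polar coordinates, $\partial K$ is a finite union of arcs of circles and radial segments; a small perturbation of the radii and angles defining $K$ (using that $\mu$ has at most countably many positive-measure circles and rays) allows me to assume $\mu(\partial K)=0$. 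I would then sandwich $\1_K$ between continuous functions $g_\epsilon^\pm$ with $\int(g_\epsilon^+-g_\epsilon^-)\,d\mu<\epsilon$, approximate each $g_\epsilon^\pm$ uniformly by a polynomial $P(w,\bar w)$ via Stone-Weierstrass, and observe that each monomial $n\mapsto f(n)^i\overline{f(n)}^j$ is itself multiplicative and bounded by $1$, so \cref{thm:org-Katai} gives $\sum_{n\le x}a(n)f(n)^i\overline{f(n)}^j=\oh(x)$. Summing over the finitely many monomials and letting $\epsilon\to 0$ closes the case.

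For $E\in\Done{\infty}$, write $E=E(f_1,\dots,f_r,z_1,\dots,z_r)$ and consider the joint map $n\mapsto(f_1(n),\dots,f_r(n))\in\C^r$. When every $f_i$ is bounded, the same Stone-Weierstrass argument goes through with polynomials in the $f_i,\bar f_i$, every monomial $\prod_i f_i^{a_i}\bar f_i^{b_i}$ being a bounded multiplicative function of $n$. The edge case $z_i=0$ is immediate, since $\{n:f_i(n)\ne 0\}$ is a multiplicative set and $\1_{\{f_i=0\}}$ is a difference of two multiplicative indicators, handled directly by \cref{thm:org-Katai}. The case of an unbounded $f_i$ is more delicate; however, the containment $E(f_i,z_i)\subset\{n:\prod_{p^k\|n}|f_i(p^k)|=|z_i|\}$ suggests multiplicatively truncating at the prime-power level by replacing $f_i$ with $g_i(n):=f_i(n)$ when all prime powers $p^k\|n$ satisfy $|f_i(p^k)|\le M$ and $g_i(n):=0$ otherwise. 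This $g_i$ is multiplicative, and a Tur\'an-Kubilius-type estimate should show the symmetric difference between $E(f_i,z_i)$ and its truncated analogue has density $\oh(1)$ as $M\to\infty$, thereby reducing the unbounded case to the bounded one up to an error we can absorb.

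The $\DtwoA$ case is where I expect the principal obstacle. Here $f$ is unbounded, $K$ is an arbitrary subset of $\C$, and the only available structure is the concentration condition $\sum_{p:f(p)=z_0}1/p=\infty$. Because $K$ is arbitrary, polynomial approximation on a compact target is not directly available; because $f$ is unbounded, no compact target is present to begin with. My plan is to exploit the concentration point through a multiplicative twist: setting $P_0:=\{p:f(p)=z_0\}$ and, when $z_0\ne 0$, defining $g(n):=z_0^{-\omega_{P_0}(n)}f(n)$, where $\omega_{P_0}(n):=\#\{p\in P_0:p\mid n\}$. This $g$ is multiplicative (since $n\mapsto z_0^{\omega_{P_0}(n)}$ is completely multiplicative as the exponential of a completely additive function) and satisfies $g(p)=1$ for every $p\in P_0$. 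By a Ruzsa-type concentration argument \cite{Ruzsa77}, the divergence $\sum_{p\in P_0}1/p=\infty$ should force $g$ to have a well-behaved distributional limit on a compact set, reducing the analysis of level sets of $f$ to level sets of the tractable $g$ and bringing us back into the $\DtwoB$ framework. Pulling the approximation back through the twist without breaking orthogonality with $a$, and treating the case $z_0=0$ separately (where the twist degenerates), are the delicate points I expect will consume the bulk of the work.
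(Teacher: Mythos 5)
There are genuine gaps in all three cases, and the most serious one is structural: your plan is to reduce everything to \cref{thm:org-Katai} by approximating $\1_E$ in an averaged $L^1$ sense by polynomials in $f,\bar f$. For $E\in\Done{\infty}$ this cannot work, because $E$ is a level set, i.e.\ the preimage of a single point $\vec z$, and any continuous majorant $g^+$ of $\1_{\{\vec z\}}$ satisfies $g^+(\vec z)\geq 1$; hence for every sandwich $g^-\leq \1_{\{\vec z\}}\leq g^+$ the averaged error $\limsup_N\frac1N\sum_{n\leq N}(g^+-g^-)(\vec f(n))$ is at least $\overline d(E)$, which is exactly the quantity you cannot afford to lose (the theorem is only interesting when $\overline d(E)>0$). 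The same objection defeats your $\DtwoB$ argument whenever the distribution of $f$ has an atom meeting $\partial K$ (e.g.\ $f=\1_Q$, $K=\{1\}$, $E=Q$ the squarefree numbers; the paper routes such sets into $\DtwoA$ instead). Separately, your opening claim for $\DtwoB$ --- that \cref{thm:wirsing} plus $\|f\|_1\neq0$ forces the empirical measures $\mu_N=\frac1N\sum_{n\leq N}\delta_{f(n)}$ to converge weak-$*$ --- is false: Wirsing covers only real-valued members of $\cm$, and for $f(n)=n^{it}$ one has $|f|\equiv1$ yet $\frac1N\sum_{n\leq N}n^{it}$ does not converge, so $\mu_N$ does not converge. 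Controlling the boundary contribution in the presence of this logarithmic drift is the real content of the $\DtwoB$ case and requires Elliott's recentred limit theorem (\cref{thm:Elliott-thm8.9}) together with a uniform-in-translates estimate; it is not a consequence of Wirsing plus Stone--Weierstrass.

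The missing idea is that the paper does not reduce to \cref{thm:org-Katai} at all. It proves a more flexible criterion (\cref{prop:pKOc}) in which multiplicativity of the weight $F=\1_E$ is replaced by a one-sided sandwich $G_1(n)H(p)\leq F(np)\leq G_2(n)H(p)$ valid for all $p$ in a set of primes with $\sum 1/p=\infty$, the Tur\'an--Kubilius/Cauchy--Schwarz argument going through with an extra error $\Oh(x\|G_1-G_2\|_1)$. Concentration points furnish this sandwich with $G_1=G_2$ exactly: if $f(p)=z_0$ for a log-divergent set of primes, then $np\in E(f,K)\iff f(n)\in z_0^{-1}K$ for $\gcd(n,p)=1$, so $\1_E(np)=\1_{E'}(n)$ with $E'=E(f,z_0^{-1}K)$ --- no distributional information about $f$ and no regularity of $K$ is needed. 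This disposes of $\Done{\infty}$ (after Ruzsa's reduction to concentrated functions) and of $\DtwoA$ in one stroke; your ``twist'' for $\DtwoA$ is groping toward this mechanism, but the proposed detour back through a $\DtwoB$-type distributional argument is a dead end since $K$ there is completely arbitrary. Only for the residual $\DtwoB$ sets (those with $f$ having no concentration point) does one need $G_1\neq G_2$, and there the sandwich is produced from the $f$-nullity of $\partial K$ via \cref{prop:finding-K_1-and-K_2}, which is where the Elliott machinery and the drift actually enter.
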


Note that one can quickly derive \cref{thm:org-Katai} from \cref{thm:Katai-for-sets}.
Indeed, any multiplicative function $f\colon\N\to\C$ that is bounded in modulus by $1$ can be uniformly approximated by finite linear combinations of functions of the form $\1_{E(f,K)}$, where $K$ is an elementary set in polar coordinates (and hence $E(f,K)\in\DtwoB$).

In \cref{sec:KOC} we also state and prove a generalization of \cref{thm:Katai-for-sets} in which the restrictions on $f$ and $K$ in the definition of $\DtwoA$ and $\DtwoB$ are slightly relaxed (see \cref{thm:Katai-for-sets-2}).
However, the restrictions on $f$ and $K$ in $\DtwoA$ and $\DtwoB$ cannot be dropped entirely, as there are multiplicative functions $f$ and sets $K\subset \C$ such that \eqref{eq:kc-conclusion-sets} does not hold for $E=E(f,K)$\footnote{Indeed, if there are no restrictions on $f$ or $K$ then any set $B\subset \N$ can be written in the from $E(f,K)$. Let $(\xi_{n})_{n\in\N}$ be a rationally independent family of irrational numbers in $[0,1)$, let $(p_n)_{n\in\N}$ be an enumeration of the prime numbers and define $f(p_1^{c_1}\cdot\ldots\cdot p_k^{c_k})=e(c_1\xi_{1}+\ldots+c_k\xi_k)$. Clearly, $f(n)\neq f(m)$ for all $n\neq m$ and therefore, if we set $K:=\{f(n):n\in B\}$, we get $E(f,K)=B$.}.

From \cref{thm:Katai-for-sets}, by setting $a(n)=e(n\theta)$, we immediately obtain the following generalization of \cref{thm:DD82-thm.1.}.  

\begin{Maincorollary}\label{cor:Katai-for-sets}
Suppose $E\subset\N$ belongs to one of the classes $\Done{\infty}$, $\DtwoA$ or $\DtwoB$. Then for any irrational $\theta$ we have
$$
\sum_{n\leq x} \1_E(n)e(\theta n)=\oh(x).
$$
\end{Maincorollary}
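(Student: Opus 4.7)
The plan is to obtain this corollary as an immediate consequence of \cref{thm:Katai-for-sets} by choosing the bounded sequence $a\colon\N\to\C$ to be $a(n):=e(\theta n)$. Since $|a(n)|=1$ for all $n$, the boundedness hypothesis of \cref{thm:Katai-for-sets} is trivially satisfied, so everything reduces to verifying the orthogonality condition $\sum_{n\leq x} a(pn)\overline{a(qn)}=\oh(x)$ for any two distinct primes $p$ and $q$.

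For this verification I would compute directly:
\[
a(pn)\overline{a(qn)} \;=\; e(\theta p n)\,e(-\theta q n) \;=\; e\bigl(\theta(p-q)n\bigr).
\]
Since $\theta$ is irrational and $p\neq q$, the number $\theta(p-q)$ is irrational, and in particular $e(\theta(p-q))\neq 1$. Hence by the standard geometric sum estimate,
\[
\left|\sum_{n\leq x} e\bigl(\theta(p-q)n\bigr)\right| \;\leq\; \frac{2}{\bigl|1-e(\theta(p-q))\bigr|} \;=\; \Oh(1),
\]
which is certainly $\oh(x)$. Therefore \eqref{KC} holds for the sequence $a(n)=e(\theta n)$.

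Applying \cref{thm:Katai-for-sets} to this $a$ and to the set $E$, which by assumption lies in $\Done{\infty}\cup\DtwoA\cup\DtwoB$, yields exactly
\[
\sum_{n\leq x}\1_E(n)\,e(\theta n)\;=\;\sum_{n\leq x}\1_E(n)\,a(n)\;=\;\oh(x),
\]
which is the desired conclusion. There is no genuine obstacle here: all of the substantive work is encapsulated in \cref{thm:Katai-for-sets}, and the only point to check is the elementary geometric sum bound above, which relies solely on the irrationality of $\theta(p-q)$.
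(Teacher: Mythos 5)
Your proposal is correct and is exactly the paper's argument: the paper obtains this corollary by setting $a(n)=e(\theta n)$ in \cref{thm:Katai-for-sets}, with the orthogonality condition \eqref{KC} reducing to the bounded geometric sum $\sum_{n\leq x}e(\theta(p-q)n)=\Oh(1)$ since $\theta(p-q)$ is irrational. Your verification of that condition is the only detail the paper leaves implicit, and you have carried it out correctly.
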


From \cref{cor:Katai-for-sets} we obtain an application to ergodic theory. We need first the following definition.

\begin{Definition}
\label{def:tot-erg-sequence}
A sequence $(n_j)_{j\in\N}$ in $\N$ is called \define{totally ergodic} if for any totally ergodic\footnote{A measure preserving system $\xbmt$ is called \define{totally ergodic} if for every $m\in\N$ the map $T^m\colon X\to X$ is ergodic.} measure preserving system $\xbmt$ and any $f\in L^2$ we have
$$
\lim_{N\to\infty}\frac{1}{N}\sum_{j=1}^N T^{n_j}f = \int_X f\, d\mu,
$$
where $Tf(x):=f(Tx)$ and the convergence takes place in $L^2\xbm$.
\end{Definition}

Using the spectral theorem, it is straightforward to show that a sequence $(n_j)_{j\in\N}$ is totally ergodic if and only if $(n_j\alpha)$ is uniformly distributed mod $1$ for all irrational $\alpha$. Thus \cref{cor:Katai-for-sets} yields the following result.

\begin{Maincorollary}\label{cor:M-sets-are-tot.erg.sequences}
Let $E=\{n_1<n_2<\ldots\}$ be a set that belongs to one of the classes $\Done{\infty}$, $\DtwoA$ or $\DtwoB$ and suppose $d(E)$ exists\footnote{For any $E\in\Done{r}$ it was shown by Ruzsa that the \define{natural density} $d(E):=\lim_{N\to\infty}\frac{|E\cap\{1,\ldots,N\}|}{N}$ exists (cf.\ {\cite[Corollary 1.6 and the subsequent remark]{Ruzsa77}}). The density of sets $E=E(f,K)$ belonging to $\DtwoA$ or $\DtwoB$ may not exist, but it exists for a rather wide family of sets $E(f,K)$, where the multiplicative function $f$ and the set $K$ are sufficiently regular. In particular, all sets appearing in \cref{example:mf} have positive natural density.} and is positive. Then $(n_j)_{j\in\N}$ is a totally ergodic sequence.
\end{Maincorollary}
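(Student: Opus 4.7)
The plan is to apply the spectral-theorem criterion stated just before the corollary and reduce total ergodicity of $(n_j)_{j\in\N}$ to showing that $(n_j\alpha)_{j\in\N}$ is uniformly distributed mod $1$ for every irrational $\alpha$. Concretely, for a totally ergodic $T$ the spectral measure $\sigma_f$ of any mean-zero $f\in L^2$ has no atoms at roots of unity, so writing
\begin{equation*}
\Big\|\frac{1}{J}\sum_{j=1}^J U_T^{n_j} f\Big\|^2=\int_{\T}\Big|\frac{1}{J}\sum_{j=1}^J z^{n_j}\Big|^2\,d\sigma_f(z)
\end{equation*}
and applying dominated convergence reduces matters to the irrational case, since the integrand is uniformly bounded and the rationals together with the point $z=1$ are $\sigma_f$-null for mean-zero $f$.

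Next I would invoke Weyl's equidistribution criterion: the sequence $(n_j\alpha)_{j\in\N}$ is u.d.\ mod $1$ provided
\begin{equation*}
\frac{1}{J}\sum_{j=1}^J e(k\alpha n_j)\longrightarrow 0 \qquad \text{for every } k\in\Z\setminus\{0\}.
\end{equation*}
Since $k\alpha$ is irrational whenever $\alpha$ is and $k\ne 0$, this reduces to the single statement that $\frac{1}{J}\sum_{j=1}^J e(\beta n_j)\to 0$ for every irrational $\beta$.

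For the remaining exponential sum I would rewrite
\begin{equation*}
\sum_{j=1}^J e(\beta n_j) \;=\; \sum_{n\leq n_J}\1_E(n)\,e(\beta n),
\end{equation*}
which by \cref{cor:Katai-for-sets} is $\oh(n_J)$. The positivity of $d(E)$ enters through the counting identity $J=|E\cap[1,n_J]|$, which yields $n_J/J\to 1/d(E)$; dividing the $\oh(n_J)$ bound by $J$ then produces the desired $\oh(1)$. The argument presents no serious obstacle: the substantive content is packaged into \cref{cor:Katai-for-sets}, while the remainder is merely classical equidistribution, the spectral theorem, and the asymptotic $n_J\sim J/d(E)$ supplied by the hypothesis $d(E)>0$.
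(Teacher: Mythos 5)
Your proposal is correct and follows essentially the same route as the paper: reduce total ergodicity via the spectral theorem to the statement that $\frac1J\sum_{j\leq J}e(\beta n_j)\to 0$ for all irrational $\beta$ (the paper packages this as \cref{thm_erg-seq-char}\ref{itm:tot-erg-seq-char}), then convert to $\frac1N\sum_{n\leq N}\1_E(n)e(\beta n)$ using $d(E)>0$ and apply \cref{cor:Katai-for-sets}. The only difference is that you spell out the spectral/dominated-convergence step that the paper leaves as a standard fact.
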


\cref{thm:Katai-for-sets} also leads to new uniform distribution results involving functions from Hardy fields.
Let $G$ denote the set of all \define{germs}\footnote{A \define{germ
at $\infty$} is an equivalence class of functions
under the equivalence relationship
$(f\sim g) \Leftrightarrow \big(\exists t_0>0
~\text{such that}~f(t)=g(t)~\text{for all}~t\in (t_0,\infty)\big)$.} at $\infty$ of real valued functions defined
on some half-line $(t_0,\infty)\subset\R$.
Note that $G$ forms a ring under
pointwise addition and multiplication, which we denote by
$(G,+,\cdot)$.
Any subfield of the ring $(G,+,\cdot)$ that is closed under
differentiation is called a
\define{Hardy field}. 
By abuse of language, we say that a function $h\colon (0,\infty)\to\R$
belongs to some Hardy field $\Hardy$ (and write $f\in\Hardy$)
if its germ at $\infty$ belongs to $\Hardy$.
See \cite{Boshernitzan81,Boshernitzan82,Boshernitzan94}
and some references therein for
more information on Hardy fields.

Here are some classical examples of functions from Hardy fields.
%Well known examples of functions coming from Hardy fields include:
\begin{itemize}
%[label=\text{Ex.\arabic*:}, ref=\text{Ex.\arabic*}, leftmargin=*]
\item
the class of \define{logarithmico-exponential functions} introduced by Hardy in \cite{Hardy12,Hardy10}, which consists of all functions that can be
obtained from polynomials with real coefficients, $\log(t)$ and $\exp(t)$
using the standard arithmetical operations $+$,$-$,$\cdot$,$/$ and
the operation of composition (e.g. $\frac{p(t)}{q(t)}$ for all $p,q\in\R[t]$, $t^c$ for all $c\in\R$, $\frac{\log t}{t}$, $t\log t$, etc.).
\item
the Gamma function $\Gamma(t)$, the Riemann zeta function $\zeta(t)$, and the logarithmic integral function $\text{Li}(t)$.
\end{itemize}

Given two functions $f,g\colon (0,\infty)\to\R$ we write $f(t)\prec g(t)$
if $\frac{g(t)}{f(t)}\to\infty$ as $t\to\infty$. We will say that a function $f(t)$ has \define{polynomial growth} if there exists $k\in\N$ such that $f(t)\prec t^k$.

The next theorem, which is proved in \cref{subsec:ud}, follows from \cref{thm:Katai-for-sets} using elementary computations and results of Boshernitzan \cite{Boshernitzan94}.

\begin{Maintheorem}
\label{thm:uniform-distribution}
Let $E=\{n_1<n_2<\ldots\}$ be a set that belongs to either $\Done{\infty}$, $\DtwoA$ or $\DtwoB$.
Suppose $h\colon (0,\infty)\to\R$ belongs to a Hardy field, has polynomial growth and satisfies $|h(t)- r(t)|\succ \log^2(t)$ for all polynomials $r\in\Q[t]$.
If $d(E)$ exists and is positive then the sequence $\big(h(n_j)\big)_{j\in\N}$ is uniformly distributed mod $1$. \end{Maintheorem}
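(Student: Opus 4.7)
The plan is to combine Main Theorem A with Boshernitzan's uniform distribution criterion for Hardy field sequences \cite{Boshernitzan94}. By Weyl's criterion, and using that $d(E) > 0$ yields $|E \cap [1,x]| \sim d(E)\,x$, showing $\bigl(h(n_j)\bigr)_{j\in\N}$ is u.d.\ mod $1$ reduces to proving that for every nonzero integer $k$,
\[
\frac{1}{x} \sum_{n \leq x} \1_E(n)\, e(k h(n)) \xrightarrow{x \to \infty} 0.
\]
I would apply Main Theorem A to the bounded sequence $a(n) := e(k h(n))$; this further reduces the problem to verifying the K\'atai orthogonality condition: for all distinct primes $p$ and $q$,
\[
\sum_{n \leq x} e\bigl(k(h(pn) - h(qn))\bigr) = \oh(x).
\]

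Set $g(t) := k\bigl(h(pt) - h(qt)\bigr)$. Since $h$ lies in some Hardy field $\Hardy$ and has polynomial growth, $g$ lies in a suitable extension of $\Hardy$ (closed under the affine rescalings $t \mapsto h(ct)$) and has polynomial growth as well. By Boshernitzan's theorem, the sequence $(g(n))_{n\in\N}$ is u.d.\ mod $1$ --- so in particular the above Weyl sum is $\oh(x)$ --- provided
\[
|g(t) - r(t)| \succ \log(t) \quad \text{for every } r \in \Q[t].
\]
The theorem therefore follows once the key lemma below is established: \emph{if $|h(t) - s(t)| \succ \log^2(t)$ for every $s \in \Q[t]$, then $|g(t) - r(t)| \succ \log(t)$ for every $r \in \Q[t]$.}

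To prove this lemma I proceed by contradiction. Assume some $r(t) = \sum_{j=0}^d r_j t^j \in \Q[t]$ satisfies $g(t) - r(t) = \Oh(\log t)$. Because $p$ and $q$ are distinct primes, $p^j - q^j \neq 0$ for every $j \geq 1$, so the polynomial
\[
\tilde{r}(t) := \sum_{j=1}^d \frac{r_j}{k(p^j - q^j)}\, t^j \in \Q[t]
\]
is well defined and satisfies the identity $k\bigl(\tilde{r}(pt) - \tilde{r}(qt)\bigr) = r(t) - r_0$. Setting $u(t) := h(t) - \tilde{r}(t)$ then gives $u(pt) - u(qt) = \Oh(\log t)$. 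After the substitution $s := \log t$, defining $V(s) := u(e^{s + \log q})$ and $\delta := \log(p/q) > 0$ transforms this into $V(s+\delta) - V(s) = \Oh(s)$. Telescoping over $\lfloor s/\delta \rfloor$ steps yields $V(s) = \Oh(s^2)$, hence $|u(t)| = \Oh(\log^2 t)$, contradicting $|h(t) - \tilde{r}(t)| \succ \log^2(t)$.

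The main obstacle is the key lemma, which quantifies the principle that the operation $h \mapsto h(pt) - h(qt)$ behaves like a differentiation in the logarithmic scale and therefore costs exactly one power of $\log$ in the admissible growth rate --- explaining why the hypothesis involves $\log^2$ rather than $\log$. With this lemma in hand, the remainder of the proof amounts to bookkeeping: confirming that $g$ has polynomial growth and lies in a Hardy field, and then invoking Main Theorem A together with Boshernitzan's criterion.
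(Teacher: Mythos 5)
Your proof is correct and follows essentially the same path as the paper: Weyl's criterion plus Theorem A applied to $a(n)=e(kh(n))$ reduces the problem to verifying that $h(pn)-h(qn)$ satisfies Boshernitzan's equidistribution criterion, and this is established by the same algebraic cancellation of the polynomial part (using $p^j-q^j\neq 0$) together with a telescoping estimate driven by the $\log^2$ hypothesis. The only cosmetic difference is that the paper isolates the telescoping step as a standalone lemma for a single function $g$ with $|g(t)|\succ\log^2 t$, telescoping directly along $c^nt_0$, whereas you fold the polynomial reduction and the telescoping into one ``key lemma'' phrased in the logarithmic variable $s=\log t$ --- the underlying computation is identical.
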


In the following corollary we give a sample of particularly interesting cases to which \cref{thm:uniform-distribution} applies.

\begin{Maincorollary}
\label{cor:uniform-distribution}
Let $E=\{n_1<n_2<\ldots\}$ be a set that belongs to one of the classes $\Done{\infty}$, $\DtwoA$ or $\DtwoB$ and suppose $d(E)$ exists and is positive. Then
\begin{itemize}
\item
the sequence $\big(p(n_j)\big)_{j\in\N}$ is uniformly distributed mod $1$
for any polynomial $p(t)=a_kt^k+\ldots+a_1t+a_0$ such that at least one of the coefficients $a_1,a_2,\ldots,a_k$ is irrational;
\item
the sequence $(n_j^c )_{j\in\N}$ is uniformly distributed mod $1$ for any positive real number $c$ that is not an integer.
\item
the sequence $(\log^r n_j)_{j\in\N}$ is uniformly distributed mod $1$ for any $r>2$.
\end{itemize}
\end{Maincorollary}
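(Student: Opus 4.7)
The plan is to deduce each of the three cases from \cref{thm:uniform-distribution}, applied with $h(t)$ taken to be, respectively, $p(t)$, $t^c$ and $\log^r(t)$. For each such $h$ I must verify the three hypotheses of that theorem: (i) $h$ lies in some Hardy field; (ii) $h$ has polynomial growth; and (iii) $|h(t)-q(t)|\succ\log^2(t)$ for every $q\in\Q[t]$. Condition (i) is immediate, since polynomials, $t^c$ and $\log^r(t)$ all belong to the field of logarithmico-exponential functions. Condition (ii) is trivial for polynomials and for $t^c$, and is even sub-polynomial for $\log^r(t)$. So the real content of the argument is verifying (iii) in each case.

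For the polynomial case $h(t)=p(t)=a_k t^k+\ldots+a_1 t+a_0$ with some $a_j$, $1\leq j\leq k$, irrational, I would argue that for every $q(t)=\sum_i b_i t^i\in\Q[t]$ the difference $p(t)-q(t)$ cannot reduce to a constant: otherwise $a_i=b_i\in\Q$ for all $i\geq 1$, contradicting the hypothesis. Hence $p-q$ is asymptotic to $ct^d$ for some nonzero $c\in\R$ and some integer $d\geq 1$, so $|p(t)-q(t)|$ grows at least linearly in $t$, which certainly dominates $\log^2(t)$.

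For $h(t)=t^c$ with $c>0$ and $c\notin\N$ and any $q\in\Q[t]$, the difference $h(t)-q(t)$ is asymptotic either to $t^c$ (when $c$ exceeds $\deg q$, including the case $q\equiv 0$) or to the leading term of $q$ (when $\deg q$ exceeds $c$); equality $c=\deg q$ is ruled out by $c\notin\N$. In either case $|h(t)-q(t)|$ grows as a fixed positive power of $t$, which dominates $\log^2(t)$. For $h(t)=\log^r(t)$ with $r>2$: if $q\in\Q[t]$ is non-constant, then $|q|$ grows polynomially and dominates $\log^r(t)$, so $|h-q|\succ t\succ\log^2(t)$; if $q$ is a constant (possibly zero), then $|h(t)-q|\sim\log^r(t)\succ\log^2(t)$ since $r>2$. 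Once (iii) is confirmed in all three cases, \cref{thm:uniform-distribution} immediately yields the uniform distribution of $(h(n_j))_{j\in\N}$.

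I do not expect any serious obstacles; the only mildly delicate point is the polynomial case, where one must use precisely the hypothesis that an irrational coefficient occurs among $a_1,\ldots,a_k$ and not merely among $a_0,\ldots,a_k$. Indeed, if $a_0$ were the sole irrational coefficient, the choice $q(t):=a_1 t+\ldots+a_k t^k\in\Q[t]$ would give $|p-q|=|a_0|=\Oh(1)$, violating (iii). This observation confirms that the restriction in the statement is sharp and is the only feature of the hypothesis that truly enters into the verification of (iii).
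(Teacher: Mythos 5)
Your proposal is correct and follows exactly the route the paper intends: the corollary is presented as a direct application of \cref{thm:uniform-distribution}, and your case-by-case verification of the hypothesis $|h(t)-r(t)|\succ\log^2(t)$ (including the observation that the exclusion of $a_0$ from the irrationality hypothesis is exactly what is needed) is the whole content. The only cosmetic slip is the claim $|h-q|\succ t$ in the $\log^r$ case when $\deg q=1$ (there one only has $|h-q|\asymp t$), but this does not affect the conclusion $|h-q|\succ\log^2(t)$.
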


\cref{thm:uniform-distribution} also yields applications to ergodic theory.

\begin{Definition}[cf.\ \cref{def:tot-erg-sequence} above]
\label{def:ergodic-sequence}
A sequence $(n_j)_{j\in\N}$ of integers is called an \define{ergodic sequence} if for any ergodic probability measure preserving system $\xbmt$ and any $f\in L^2$ we have
$$
\lim_{N\to\infty}\frac{1}{N}\sum_{j=1}^N T^{n_j}f = \int_X f\, d\mu,
$$
where convergence takes place in $L^2\xbm$.
\end{Definition}

Using the spectral theorem and standard techniques in ergodic theory one can derive from \cref{thm:uniform-distribution} the following corollary.

\begin{Maincorollary}
\label{thm:ergodic-sequence}
Let $E=\{n_1<n_2<\ldots\}$ be a set that belongs to one of the classes $\Done{\infty}$, $\DtwoA$ or $\DtwoB$. Suppose $h\colon (0,\infty)\to\R$ belongs to a Hardy field, has polynomial growth and satisfies either $\log^2 t\prec h(t)\prec t$ or $t^k\prec h(t)\prec t^{k+1}$ for some $k\in\N$.
If $d(E)$ exists and is positive then the sequence $\big(\lfloor h(n_j)\rfloor\big)_{j\in\N}$ is an ergodic sequence.
\end{Maincorollary}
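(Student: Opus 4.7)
The plan is to combine \cref{thm:uniform-distribution} with the spectral theorem. First I would use the spectral theorem to reduce the ergodic sequence condition to a Weyl-type decay: writing $f=\int f\,d\mu+f_0$ with $f_0$ orthogonal to the constants, the spectral measure $\sigma_{f_0}$ places no mass at $1\in\T$ by ergodicity, and dominated convergence against $\sigma_{f_0}$ shows that it is enough to prove
\begin{equation*}
\frac{1}{N}\sum_{j=1}^N e\!\left(\beta\lfloor h(n_j)\rfloor\right)\longrightarrow 0\qquad\text{for every}~\beta\in(0,1).
\end{equation*}

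The next step is a preparatory observation: \cref{thm:uniform-distribution} applies not only to $h$ but to $\alpha h$ for every $\alpha\in\R\setminus\{0\}$. The Hardy-field and polynomial-growth hypotheses are stable under scaling, and the growth lower bound $|\alpha h(t)-r(t)|\succ\log^2 t$ for all $r\in\Q[t]$ follows by casework on $\deg r$: depending on whether $\alpha h$ dominates $r$ or vice versa, one uses either $|h(t)|\succ\log^2 t$ (in fact $\succ t^k$ in the second regime) or the polynomial lower bound $|r(t)|\succ t$ (respectively $\succ t^{k+1}$) coming from the rational nonzero leading coefficient, the dichotomy being dictated by the regime $\log^2 t\prec h(t)\prec t$ or $t^k\prec h(t)\prec t^{k+1}$. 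Consequently, $(\alpha h(n_j))_{j\in\N}$ is uniformly distributed $\bmod~1$ for every nonzero real $\alpha$.

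To handle irrational $\beta$ I would apply Weyl's criterion to the pair $(h(n_j),\beta h(n_j))$: for any $(k,\ell)\in\Z^2\setminus\{(0,0)\}$ we have $k+\ell\beta\neq 0$, and the preparatory observation applied to $(k+\ell\beta)h$ yields $\frac{1}{N}\sum_j e\!\left((k+\ell\beta)h(n_j)\right)\to 0$, i.e.\ $(\{h(n_j)\},\{\beta h(n_j)\})$ is uniformly distributed in $\T^2$. Test this joint distribution against
\begin{equation*}
F(u,v):=e(-\beta\tilde u)\,e(\tilde v),\qquad \tilde u,\tilde v\in[0,1),
\end{equation*}
which satisfies $F(\{h(n_j)\},\{\beta h(n_j)\})=e\!\left(\beta\lfloor h(n_j)\rfloor\right)$ and $\int_{\T^2}F=0$; its discontinuity locus $\{u=0\}$ is Lebesgue null in $\T^2$, so $F$ is Riemann integrable and the standard extension of uniform distribution from continuous to Riemann integrable test functions gives the desired decay. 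For rational $\beta=p/q$ with $q\geq 2$, the function $G(x):=e\!\left((p/q)\lfloor x\rfloor\right)$ is $q$-periodic, takes the value $e(kp/q)$ on $[k,k+1)$ $\bmod q$, and averages to zero over one period; applying the preparatory observation to $h/q$ shows that $(h(n_j)\bmod q)$ is uniformly distributed on $[0,q)$, and Riemann integrability of $G$ finishes this case too.

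The main technical obstacle I expect is the preparatory observation in the irrational-$\alpha$ case: one cannot simply absorb $\alpha$ into the rational coefficients of $r$, so the lower bound $|\alpha h(t)-r(t)|\succ\log^2 t$ has to be extracted from growth comparisons in each of the two Hardy-type regimes on $h$. Once this is in place, the rest is a standard spectral reduction coupled with Weyl's criterion and the Riemann extension of equidistribution.
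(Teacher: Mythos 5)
Your proposal is correct and follows essentially the same route as the paper's proof of \cref{thm:ergodic-sequence-2}: a spectral reduction to Weyl sums for $\lfloor h(n_j)\rfloor$, the key observation that $ch$ still satisfies the hypotheses of \cref{thm:uniform-distribution} for every nonzero real $c$ (your casework here fills in a step the paper only asserts), joint equidistribution of the pair $\big(h(n_j),\beta h(n_j)\big)$ via Weyl's criterion, and evaluation against a Riemann integrable test function whose discontinuity set is null. The only cosmetic difference is that the paper treats rational and irrational $\alpha$ uniformly by working with the closed subgroup $H=\overline{\{(\alpha t,t)\bmod 1\}}\subset\T^2$ and its Haar measure, whereas you split into the two cases and handle rational $\beta=p/q$ by a one-dimensional argument mod $q$.
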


\paragraph{Structure of the paper:}
\

In \cref{sec:prelim} we review basic results and facts regarding multiplicative and additive functions, which are needed in the subsequent sections.

In \cref{sec:Katai-all} we establish some generalizations of the K{\'a}tai orthogonality criterion and, in particular, give a proof of \cref{thm:Katai-for-sets}.

Sections \ref{subsec:ud} and \ref{sec:single-rec} contain numerous applications of our main results to the theory of uniform distribution and to ergodic theory. \cref{thm:uniform-distribution} is proved in \cref{subsec:ud} and \cref{thm:ergodic-sequence} is proved in \cref{sec:single-rec}.

%\paragraph{Acknowledgements:} We thank Viktor Losert for providing several helpful comments and additional references regarding \cref{thm:besicovitch-approximation-theorem} in Subsection \ref{subsec:AP}.

% =========================================================SECTION
\section{Preliminaries}\label{sec:prelim}
% ================================================================

In this section we present a brief overview of classical results and facts from multiplicative number theory that will be used in subsequent sections.
%We mainly follow the books \cite{Elliott79,GSdraft}.
%present some basic results and ideas which will be used in the subsequent sections.
%In certain instances the classical results are presented in a slightly modified form and in those cases proofs are provided.

% ======================================================SUBSECTION
\subsection{Multiplicative functions}
\label{subsec:multiplicative-functions}
% ================================================================

Define
\[
\cm:=\Big\{f\colon\N\to\C:\text{$f$ is multiplicative and}~\sup_{n\in\N}|f(n)|\leq1\Big\}.
\]
The following sample amply demonstrates the diversity of multiplicative functions belonging to $\cm$; these functions will frequently appear in the later sections.

\begin{Example}
\label{Ex:multiplicative-functions}
\
\begin{enumerate}
[label=\text{Ex.\ref{Ex:multiplicative-functions}.\arabic*:}, ref=\text{Ex.\ref{Ex:multiplicative-functions}.\arabic*}, leftmargin=*]
\item\label{item:eg:multiplicative-functions-1}
The \define{Liouville function}
$\lio$ is defined as $\lio(n):=(-1)^{\Omega(n)}$ and is completely multiplicative (for the definition of $\Omega(n)$ see \cref{example:mf}).
%Given a natural number $n$, we use $\omega(n)$ to denote the number of distinct prime factors of $n$ and we use $\Omega(n)$ to denote the number of prime factors of $n$, counted with multiplicities.
\item\label{item:eg:multiplicative-functions-1.5}
The \define{M{\"o}bius function} $\mob$ is defined as $\mob(n):=\lio(n)$ if $n$ is squarefree and $\mob(n):=0$ otherwise. Note that $\mob$ is multiplicative but not completely multiplicative.
\item\label{item:eg:multiplicative-functions-2}
Let $\tot$ denote Euler's totient function. Clearly, $\frac{\tot(n)}{n}\in\cm$.
\item\label{item:eg:multiplicative-functions-3}
An arithmetic function $\chi$ is called a \define{Dirichlet character} if there exists a number $d\in\N$, called a \define{modulus of $\chi$}, such that
\begin{enumerate}
[label=\text{(\arabic*)\quad}, ref=\text{(\arabic*)}, leftmargin=*]
	\item 	$\chi(n+d)=\chi(n)$ for all $n\in\N$;
	\item 	$\chi(n)=0$ whenever $\gcd(d,n)>1$, and $\chi(n)$ is a $\tot(d)$-th root of unity whenever $\gcd(d,n)=1$;
	\item	$\chi(nm)=\chi(n)\chi(m)$ for all $n,m\in\N$.
\end{enumerate}
Any Dirichlet character is periodic and completely multiplicative. Also $\chi\colon \N\to\C$ is a Dirichlet character of modulus $k$ if and only if there exists a group character $\widetilde{\chi}$ of the multiplicative group $(\Z/k\Z)^*$ such that $\chi(n)=\widetilde{\chi}(n~{\rm mod}~k)$ for all $n\in\N$.
\item\label{item:eg:multiplicative-functions-4}
An \define{Archimedean character} is a function of the form $n\mapsto n^{it}=e^{it\log n}$ with $t\in\R$. Any Archimedean character
is completely multiplicative and takes values in the unit circle.
\item\label{item:eg:multiplicative-functions-5}
Throughout this paper we identify the torus $\T:=\R/\Z$ with the unit interval $[0,1)\bmod 1$ or, when convenient, with the unit circle in the complex plane. Given $\xi\in\T$, let us define the multiplicative functions $\boldsymbol{\kappa}_\xi$, $\lio_{\xi}$ and $\mob_{\xi}$ as
$$
\boldsymbol{\kappa}_\xi(n):=e( \xi\omega(n)),\qquad
\lio_{\xi}(n):=e( \xi\Omega(n))
$$
and
$$
%\quad\text{and}\quad
\mob_{\xi}(n):=
\begin{cases}
e(\xi\Omega(n)),&\text{if $n$ is squarefree}
\\
0,&\text{otherwise.}
\end{cases}
$$
%$\lio_{\xi}(n):=e( \xi\Omega(n))$ and$\mob_{\xi}(n):=e(\xi\Omega(n))$ if $n$ is squarefree and $\mob_{\xi}(n):=0$ if $n$ is not squarefree.
It is clear that $\boldsymbol{\kappa}_\xi,\lio_{\xi},\mob_{\xi}\in\cm$.
\end{enumerate}
\end{Example}

For $f\in\cm$ let $M(f)$ denote the
\define{mean value} of $f$ whenever it exists, i.e.,
\begin{equation}
\label{eqn:mean-value}
M(f):=\lim_{N\to\infty}\frac1N\sum_{n=1}^N f(n).
\end{equation}
Note that the mean of a multiplicative function does not always exist (take, for example, Archimedean characters, cf.\ \cite[Section 4.3]{GSdraft}).

In the 1960s the study of mean values of multiplicative functions was catalyzed by the works of D{'e}lange, Wirsing and Hal{\'a}sz \cite{Delange61,Halasz68,Wirsing61}.
For real-valued functions in $\cm$ Wirsing showed that the mean value always exists:
%In particular, useful theorems dealing with the existence of means for functions in $\cm$ were established by Wirsing and Hal{\'a}sz.

\begin{Theorem}[Wirsing; see \cite{Wirsing61} and {\cite[Theorem 6.4]{Elliott79}}]
\label{thm:wirsing}
For any real-valued $g\in\cm$ the mean value $M(g)$ exists.
\end{Theorem}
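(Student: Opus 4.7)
The plan is to derive and then analyze an asymptotic integral equation for the normalized summatory function $u(x) := \frac{1}{x}\sum_{n \le x} g(n)$. The starting point is the convolution identity $\log n = \sum_{d \mid n} \Lambda(d)$; after multiplying by $g(n)$, summing over $n \le x$, and unravelling the coprimality conditions via the multiplicativity of $g$, one obtains (modulo an easily controlled error)
\[
\sum_{n \le x} g(n) \log n \;=\; \sum_{p^k \le x} g(p^k) \Lambda(p^k) \sum_{\substack{m \le x/p^k \\ \gcd(m,p) = 1}} g(m) \;+\; O(x).
\]
Partial summation on the left and the substitution $x = e^y$ convert this into a renewal-type self-consistent equation for $u(e^y)$, whose right-hand side is essentially an average of $u$ at smaller arguments weighted by the multiplicative data $g(p^k) \log p / p^k$.

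The analysis then splits according to whether the series $P(g) := \sum_p (1 - g(p))/p$ converges; note that $P(g)$ is well defined and nonnegative since $g$ is real-valued with $|g(p)| \le 1$. In the convergent case, the Dirichlet series $F(s) = \sum_n g(n) n^{-s}$ factors as $\zeta(s) H(s)$ with $H(s)$ extending continuously to a nonzero value at $s = 1$, and a Hardy--Littlewood--Karamata Tauberian argument yields
\[
M(g) \;=\; H(1) \;=\; \prod_p \Bigl(1 - \frac{1}{p}\Bigr)\biggl(1 + \sum_{k \ge 1} \frac{g(p^k)}{p^k}\biggr).
\]
In the divergent case, the target is to show that the self-consistent equation for $u$ forces $u(e^y) \to 0$ as $y \to \infty$, i.e.\ $M(g) = 0$.

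The main obstacle is the divergent case $P(g) = \infty$. Here the hypothesis that $g$ is \emph{real-valued} is indispensable: it rules out the sole genuine obstruction to the mean existing, namely $g$ pretending to be an Archimedean character $n \mapsto n^{it}$ with $t \ne 0$, because $g(p) \in [-1,1]$ cannot correlate persistently with the oscillating phase $\cos(t \log p)$. Making this heuristic rigorous requires combining the integral equation above with a Tur\'an--Kubilius type variance estimate to obtain $|G(x)| \ll x \exp\bigl(-c \sum_{p \le x}(1 - g(p))/p\bigr) = o(x)$. A convenient auxiliary step, exploiting that $g^2 \in \cm$ is nonnegative and multiplicative, is to establish the result first for nonnegative $g \in \cm$, where the integral equation becomes monotone and Hal\'asz--Wirsing monotonicity arguments apply directly; the signed case is then reduced to the nonnegative one by a sign-decomposition of $G(x)$ on prime-power intervals together with the bounds coming from the $g^2$ case. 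Combining the two regimes yields the existence of $M(g)$ for every real-valued $g \in \cm$.
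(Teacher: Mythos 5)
The paper gives no proof of this statement: \cref{thm:wirsing} is quoted as a classical result with pointers to Wirsing's paper and to Elliott's book, so there is no internal argument to compare yours against. Judged on its own terms, your outline reproduces the correct architecture of the classical proof --- the renewal equation coming from $\log n=\sum_{d\mid n}\Lambda(d)$, the dichotomy on $P(g)=\sum_p(1-g(p))/p$, a Delange-type evaluation in the convergent case, and a reduction of the signed case to the nonnegative case --- but the two steps that constitute essentially all of the difficulty of the theorem are asserted rather than carried out, so this is a roadmap, not a proof.

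Concretely: (1) in the divergent case you claim that the integral equation ``combined with a Tur\'an--Kubilius type variance estimate'' yields $|G(x)|\ll x\exp\bigl(-c\sum_{p\le x}(1-g(p))/p\bigr)$. That inequality (Hall's theorem, i.e.\ the real case of quantitative Hal\'asz) does not follow from Tur\'an--Kubilius plus the renewal identity in any routine way; establishing it requires either Hal\'asz's Parseval/contour analysis of $F(s)/s$ together with the fact that for real $g$ the minimizing Archimedean twist may be taken to be $t=0$, or Wirsing's own elaborate iteration. Naming the target bound is not deriving it. (2) The passage from nonnegative to signed real $g$ --- ``a sign-decomposition of $G(x)$ on prime-power intervals together with the bounds coming from the $g^2$ case'' --- is precisely the step that resisted proof for decades (it is the Erd\H{o}s--Wintner conjecture, settled by Wirsing in 1967), and one sentence does not carry it; the $g^2$ trick controls $\sum_{n\le x}g(n)^2$ but relating this to $\sum_{n\le x}g(n)$ is the whole battle. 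A smaller point: in the convergent case the Hardy--Littlewood--Karamata theorem requires one-signed coefficients and does not apply to the signed series $F(s)$; the correct elementary route is Delange's convolution $g=1\ast h$ with $\sum_n|h(n)|/n<\infty$ (which, since $g(p)\le 1$ is real, does follow from the convergence of $P(g)$), and this also handles the degenerate case $g(2^k)=-1$ for all $k$, where the Euler product vanishes and the mean is $0$.
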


The next theorem is due to Hal{\'a}sz \cite{Halasz68} and provides easy to check (necessary and sufficient) conditions for $M(g)$ to exist. We use $\P$ to denote the set of prime numbers.

\begin{Theorem}[Hal{\'a}sz; see {\cite[Theorem 6.3]{Elliott79}}]
\label{thm:FH-thm.2.9}
Let $g\in\cm$. Then the mean value $M(g)$
exists if and only if one of the following mutually exclusive conditions is satisfied:
\begin{enumerate}
[label=\text{(\roman{enumi})}, ref=\text{(\roman{enumi})}, leftmargin=*]
\item\label{item:E79-thm6.3-i}
there is at least one positive integer $k$ so that $g(2^k)\neq-1$ and, additionally, the series $\sum_{p\in\P}\frac1p(1-g(p))$ converges;
\item
\label{item:E79-thm6.3-iii}
there is a real number $t$ such that $\sum_{p\in\P}\frac1p(1-\Re(g(p)p^{it}))$ converges and, moreover, for each positive integer $k$ we have $g(2^k)=-2^{itk}$;
\item
\label{item:E79-thm6.3-iv}
$\sum_{p\in\P}\frac1p(1-\Re(g(p)p^{it}))=\infty$ for each $t\in\R$.
\end{enumerate}
When condition \ref{item:E79-thm6.3-i} is satisfied then $M(g)$ is non-zero and can be computed explicitly using the formula
\begin{equation}
\label{eq:mean-value-0}
M(g)=\prod_{p\in\P}\left(1-\frac1p\right)\left(1+\sum_{m=1}^\infty p^{-m}g(p^m)\right).
\end{equation}
In the case when $g$ satisfies either \ref{item:E79-thm6.3-iii} or \ref{item:E79-thm6.3-iv} then the mean value $M(g)$ equals zero.
\end{Theorem}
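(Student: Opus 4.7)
The plan is to follow the classical route via the Hal\'asz inequality: for every $g \in \cm$, every $x \ge 3$ and $T \ge 1$,
$$\left|\frac{1}{x}\sum_{n \le x} g(n)\right| \ll \bigl(1+M(x,T)\bigr)e^{-M(x,T)} + \frac{1}{\sqrt T},$$
where $M(x,T):=\min_{|t|\le T}\sum_{p \in \P,\, p\le x}\frac{1-\Re(g(p)p^{-it})}{p}$. Establishing this inequality, by way of a Parseval-type estimate for the Dirichlet series $\sum_n g(n)n^{-s}$ on the line $\Re(s)=1+1/\log x$ combined with a Tur\'an-Kubilius variance bound, is the main technical step and the hardest part of the argument.

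Once Hal\'asz's inequality is in hand, sufficiency splits into three cases of increasing delicacy. Case \ref{item:E79-thm6.3-iv} is immediate: one has $M(x,T)\to\infty$ as $x\to\infty$ for every fixed $T$, so choosing $T=T(x)\to\infty$ slowly enough gives $M(g)=0$. In case \ref{item:E79-thm6.3-iii} I would apply the same inequality to the twisted function $h(n):=g(n)n^{-it}$; the hypothesis $g(2^k)=-2^{itk}$ translates to $h(2^k)=-1$ for all $k \ge 1$, which is exactly the \emph{Wirsing exceptional} configuration forcing $M(h)=0$, and a partial summation (integration against $u^{it}$) then propagates this to $M(g)=0$. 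Case \ref{item:E79-thm6.3-i} is the one producing a nonzero limit: convergence of $\sum_p \frac{1-g(p)}{p}$ controls the fluctuations of $g$ while the side condition on some $g(2^k)$ rules out Wirsing's exceptional vanishing, so existence of $M(g)$ follows, and by expressing the Dirichlet series $F(s)=\sum_n g(n)n^{-s}$ as an Euler product and comparing $(s-1)F(s)$ with $(s-1)\zeta(s)$ as $s\to 1^+$ one extracts the explicit formula \eqref{eq:mean-value-0}.

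For necessity, suppose $M(g)$ exists. If \ref{item:E79-thm6.3-iv} holds we are already done. Otherwise fix $t\in\R$ with the pretentious sum $\sum_p \frac{1-\Re(g(p)p^{it})}{p}$ finite, set $h(n):=g(n)n^{-it}$, and observe that $M(h)$ also exists by partial summation. If some $h(2^k)\neq-1$, then the Hal\'asz-Wirsing conclusion applied to $h$ yields $M(h)\neq 0$, and the existence of $M(g)=\lim_N \frac{1}{N}\sum_{n\le N}h(n)n^{it}$ forces $t=0$ (otherwise the oscillating factor $n^{it}$ prevents convergence of the Ces\`aro means), placing $g$ in \ref{item:E79-thm6.3-i}; otherwise $h(2^k)=-1$ for every $k\ge1$, which unwinds to $g(2^k)=-2^{itk}$ and places $g$ in \ref{item:E79-thm6.3-iii}. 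The mutual exclusivity of the three cases is immediate from inspection of the constraints on the values $g(2^k)$; the main conceptual obstacle throughout is isolating the role of the prime $2$, which is the sole reason for the asymmetric appearance of $g(2^k)$ in \ref{item:E79-thm6.3-i} and \ref{item:E79-thm6.3-iii}.
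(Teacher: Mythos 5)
First, a remark on scope: the paper does not prove this statement at all --- it is Hal\'asz's mean value theorem, quoted from Elliott's book (Theorem 6.3 there) and used as a black box throughout. So there is no in-paper proof to compare with, and I can only assess your outline on its own terms.

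Your outline follows the standard route, but it has genuine gaps beyond the (acknowledged) omission of the proof of Hal\'asz's inequality. The central one is that Hal\'asz's inequality only ever produces \emph{upper bounds} for $\frac{1}{x}\bigl|\sum_{n\le x}g(n)\bigr|$, so by itself it can only settle case \ref{item:E79-thm6.3-iv}. For case \ref{item:E79-thm6.3-i} (existence of a \emph{nonzero} limit together with the formula \eqref{eq:mean-value-0}) and for case \ref{item:E79-thm6.3-iii} (vanishing forced by the local factor at $2$) one needs the Delange--Wirsing asymptotic expressing $\frac{1}{x}\sum_{n\le x}h(n)$ in terms of the truncated Euler product over $p\le x$, valid when $\sum_p p^{-1}(1-\Re h(p))$ converges. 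Your appeals to ``the Wirsing exceptional configuration forcing $M(h)=0$'' and to ``comparing $(s-1)F(s)$ with $(s-1)\zeta(s)$'' conceal exactly this step: the Dirichlet-series comparison only yields the Abel (or logarithmic) mean, and upgrading that to the Ces\`aro mean requires a further Tauberian or elementary (Wirsing-type) argument that you do not supply.

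The necessity direction is also not sound as written. You assert that $M(h)$ exists ``by partial summation'' from the existence of $M(g)$; partial summation in that direction fails when $t\neq 0$ and $M(g)\neq 0$ (then $\frac{1}{x}\sum_{n\le x}g(n)n^{-it}$ oscillates), so that case must be excluded first. More seriously, the claim that ``some $h(2^k)\neq -1$ plus convergence of the real part of the pretentious sum yields $M(h)\neq 0$'' is false: if $\sum_p p^{-1}(1-\Re h(p))$ converges but $\sum_p p^{-1}\Im h(p)$ diverges, the truncated Euler products have modulus bounded away from $0$ but rotating argument, and $M(h)$ does not exist at all. Note that condition \ref{item:E79-thm6.3-i} demands convergence of the full complex series $\sum_p p^{-1}(1-g(p))$, whereas your hypotheses only ever control its real part; explaining how the existence of $M(g)$ upgrades the one to the other is precisely the delicate point of the necessity direction, and it is missing from your argument.
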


Throughout the paper, given a bounded arithmetic function $f\colon\N\to\C$ we use $\|f\|_1$ to denote the seminorm
$$
\|f\|_1:=\limsup_{N\to\infty}\frac{1}{N}\sum_{n=1}^N |f(n)|.
$$ 

\begin{Corollary}[see {\cite[Lemma 2.9]{BKLR17arXiv}}]\label{lem:lH2-C}
Suppose $f\in\cm$. Then $\|f\|_1=0$ if and only if
$\sum_{p\in\P}\frac1p\big(1-|f(p)|\big)=\infty$.
\end{Corollary}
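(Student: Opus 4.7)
The plan is to reduce the statement to Halász's theorem (\cref{thm:FH-thm.2.9}) applied to the multiplicative function $|f|$. First I would observe that since $f$ is multiplicative with $|f|\leq 1$, the function $|f|\colon\N\to[0,1]$ is itself a real-valued element of $\cm$. Consequently, by Wirsing's theorem (\cref{thm:wirsing}), the mean value $M(|f|)$ exists, so the $\limsup$ in the definition of $\|f\|_1$ is actually a limit and $\|f\|_1=M(|f|)$. It therefore suffices to prove that $M(|f|)=0$ if and only if $\sum_{p\in\P}\frac{1}{p}\bigl(1-|f(p)|\bigr)=\infty$.

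Next I would apply Halász's theorem to the function $g:=|f|$ and check which of the three mutually exclusive cases \ref{item:E79-thm6.3-i}, \ref{item:E79-thm6.3-iii}, \ref{item:E79-thm6.3-iv} can occur. Since $|f|$ is non-negative, the condition in \ref{item:E79-thm6.3-iii} requires $|f|(2^k)=-2^{itk}$, whose right-hand side has modulus $1$ but whose left-hand side is non-negative; this is impossible, so case \ref{item:E79-thm6.3-iii} is automatically ruled out. In case \ref{item:E79-thm6.3-i}, the hypothesis on $|f|(2^k)$ is automatic (as $|f|(2^k)\geq 0\neq -1$), so the content reduces to the convergence of $\sum_{p\in\P}\frac{1}{p}\bigl(1-|f(p)|\bigr)$, and in this case the theorem gives $M(|f|)\neq 0$. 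In case \ref{item:E79-thm6.3-iv}, the divergence of $\sum_{p\in\P}\frac{1}{p}\bigl(1-\Re(|f|(p)p^{it})\bigr)$ holds for \emph{every} $t\in\R$; specialising to $t=0$ yields divergence of $\sum_{p\in\P}\frac{1}{p}\bigl(1-|f(p)|\bigr)$, and the theorem gives $M(|f|)=0$.

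Combining these observations finishes the proof. If $\sum_{p\in\P}\frac{1}{p}\bigl(1-|f(p)|\bigr)<\infty$, then case \ref{item:E79-thm6.3-i} holds and $\|f\|_1=M(|f|)\neq 0$. Conversely, if this series diverges, then case \ref{item:E79-thm6.3-i} fails; since case \ref{item:E79-thm6.3-iii} is a priori impossible and since $M(|f|)$ exists (by Wirsing), the only remaining option is case \ref{item:E79-thm6.3-iv}, which forces $\|f\|_1=M(|f|)=0$.

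The only subtle point, and the one deserving care, is the exclusion of case \ref{item:E79-thm6.3-iii}: it is tempting to try to derive the corollary directly from the $t=0$ instance of the Halász dichotomy, but Halász's theorem a priori only asserts divergence for \emph{some} $t$; the fact that Wirsing guarantees existence of $M(|f|)$ combined with the positivity of $|f|$ (to kill case \ref{item:E79-thm6.3-iii}) is what lets us pin down that $t=0$ is in fact forced whenever the mean vanishes.
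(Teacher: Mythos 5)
Your argument is correct and is exactly the intended derivation: the paper states this result as a corollary of Hal{\'a}sz's theorem (\cref{thm:FH-thm.2.9}), deferring details to the cited reference, and your route --- pass to the multiplicative function $|f|$, use Wirsing (\cref{thm:wirsing}) to know $M(|f|)$ exists so that $\|f\|_1=M(|f|)$, then read off the Hal{\'a}sz trichotomy --- is precisely that derivation. Two small remarks. First, your one-line exclusion of case \ref{item:E79-thm6.3-iii} is slightly loose: a non-negative real \emph{can} have modulus $1$ (namely the value $1$), so the contradiction really comes from the requirement that $|f|(2^k)=-2^{itk}$ hold for \emph{every} $k$; the $k=1$ condition forces $2^{it}=-1$, whence $-2^{2it}=-(2^{it})^2=-1<0$ cannot equal $|f|(4)\geq 0$. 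Second, excluding case \ref{item:E79-thm6.3-iii} is in fact unnecessary: Hal{\'a}sz gives $M(g)=0$ in both cases \ref{item:E79-thm6.3-iii} and \ref{item:E79-thm6.3-iv}, so once case \ref{item:E79-thm6.3-i} fails and $M(|f|)$ is known to exist, you may conclude $M(|f|)=0$ without identifying which of the two remaining cases occurs.
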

%\begin{proof}
%First, observe that $\|f\|_1=0$ if and only if the mean value of the multiplicative function $|f|$ is zero, i.e., $M(|f|)=0$. In view of \cref{thm:FH-thm.2.9}, the mean value of $|f|$ is zero if and only if $|f|$ satisfies either condition \ref{item:E79-thm6.3-iii} or condition \ref{item:E79-thm6.3-iv} of the theorem.
%Since
%$$
%1-|f(p)|\cos\left(t{\log(p)}\right)\geq \min\left(1,1-\cos\left(t{\log(p)}\right)\right),\qquad\forall p\in\P,
%$$
%and $\sum_{p\in\P}\frac1p\big(1-\Re(p^{it})\big)=\infty$ for all $t\neq 0$ (see\ \cite[Remark after Lemma 2.2]{FKLdraft}), it follows that $\sum_{p\in\P}\frac1p(1-\Re(|f(p)| p^{it}))=\infty$ for all $t\neq 0$.
%Therefore $|f|$ cannot satisfy condition \ref{item:E79-thm6.3-iii} of \cref{thm:FH-thm.2.9}. Hence $M(|f|)=0$ if and only if $|f|$ satisfies condition \ref{item:E79-thm6.3-iv} of \cref{thm:FH-thm.2.9}. Finally, observe that $|f|$ satisfies condition \ref{item:E79-thm6.3-iv} if and only if $\sum_{p\in\P}\frac1p\big(1-|f(p)|\big)=\infty$.
%\end{proof}

\begin{Example}
Consider the multiplicative function $\tfrac{\tot(n)}{n}$ of \ref{item:eg:multiplicative-functions-2} on page \pageref{item:eg:multiplicative-functions-2}. By \cref{thm:wirsing} we have that $M\left(\frac{\tot(n)}{n}\right)$ exists.
\cref{lem:lH2-C} implies that $M\left(\frac{\tot(n)}{n}\right)$ is non-zero. Indeed, $\sum_{p\in\P}\frac{1}{p}\left(1-\frac{\tot(p)}{p}\right)=\sum_{p\in\P}\frac{1}{p^2}<\infty$ and therefore, by \cref{lem:lH2-C}, $\left\|\frac{\tot(n)}{n}\right\|_1>0$. Hence the mean value of $\frac{\tot(n)}{n}$ is positive.
\end{Example}

% =========================================================SECTION
\subsection{Additive functions with values in $\T$}\label{sec:distribution-additive-functions}
% ================================================================

An arithmetic function $a\colon \N\to\T$ is called \define{additive} if $a(n\cdot m)=a(n)+a(m)\bmod 1$ for all $m,n$ with $\gcd(n,m)=1$. Note that for every additive function $a\colon \N\to\T$ the function $f\colon \N\to\{z\in\C:|z|=1\}\subset\C$ defined as
$$
f(n):=e(a(n))=e^{2\pi i a(n)}
$$
is a multiplicative function.

\begin{Definition}
\label{def:nu-u.d.}
Let $\nu$ be a Borel probability measure on $\T$ and let $x\colon \N\to\T$. The sequence $x$ has \define{limiting distribution $\nu$} if for all continuous functions $F\in C(\T)$,
$$
\lim_{N\to\infty}\frac{1}{N}\sum_{n=1}^N F(x(n))=\int_\T F\, d\nu.
$$
If $\nu$ is the Lebesgue measure on $\T$, then $x(n)$ is said to be \define{uniformly distributed in $\T$}.
\end{Definition}

\begin{Theorem}[see {\cite[Theorem 8.1, Theorem 8.2 and Remark after Theorem 8.2]{Elliott79}}]\label{thm:Elliott-thm8.1-and-thm8.2}
Let $a\colon \N\to \T$ be an additive function and $f(n):=e(a(n))$ denote the corresponding multiplicative function.
\begin{enumerate}
[label=\text{(\alph*)}, ref=\text{(\alph*)}, leftmargin=*]
\item\label{thm8.1}
The additive function $a(n)$ is uniformly distributed in $\T$ if and only if
$\sum_{p\in\P}\frac1p\big(1-\Re(f^k(p) p^{it})\big)=\infty$ for all $t\in\R$ and all $k\geq 1$.
\item\label{thm8.2}
The additive function $a(n)$ has a limiting distribution $\nu$ that is not the Lebesgue measure if and only if there exists $k\in\N$ such that
$
\sum_{p\in\P}\frac{1}{p}\big(1-f^k(p)\big)
$
converges. The limiting distribution is continuous (i.e.\ the measure $\nu$ is non-atomic) if and only if
$$
\sum_{p\in \P\atop m a(p)\neq 0\bmod 1}\frac{1}{p}=\infty,\qquad\forall m\in\N.
$$
\end{enumerate}
\end{Theorem}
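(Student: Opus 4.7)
The plan is to reduce both parts to mean-value statements for the multiplicative functions $f^k$ and then invoke Halász's theorem (\cref{thm:FH-thm.2.9}). Since $f^k(n) = e(k a(n))$, Weyl's equidistribution criterion on $\T$ says that $a$ is uniformly distributed iff $M(f^k) = 0$ for every integer $k \neq 0$; because $f$ is unimodular one has $M(f^{-k}) = \overline{M(f^k)}$, so it suffices to take $k \geq 1$. The Fourier-analytic analogue, together with Stone--Weierstrass on $\T$, shows that $a$ admits a limiting distribution $\nu$ iff $M(f^k)$ exists for every $k \in \Z$, with $\nu$ equal to Lebesgue measure iff in addition $M(f^k) = 0$ for all $k \neq 0$.

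For part (a), \cref{thm:FH-thm.2.9} gives that $M(f^k) = 0$ is equivalent to the disjunction of conditions \ref{item:E79-thm6.3-iii} and \ref{item:E79-thm6.3-iv}. Condition \ref{item:E79-thm6.3-iv} is literally the hypothesis of (a) for this $k$, while \ref{item:E79-thm6.3-iii} would supply a single exceptional $t_0$ at which the sum converges, contradicting the divergence for all $t$ demanded by (a). Hence the hypothesis of (a) holds for every $k \geq 1$ iff $M(f^k) = 0$ for every $k \geq 1$ iff $a$ is uniformly distributed.

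For the non-Lebesgue half of (b), the existence of some $k \neq 0$ with $M(f^k) \neq 0$ is equivalent by \cref{thm:FH-thm.2.9} to condition \ref{item:E79-thm6.3-i} holding for some $k$: both convergence of $\sum_p \tfrac{1}{p}(1 - f^k(p))$ and the side condition $f^k(2^m) \neq -1$ for some $m$. The statement drops this $2^m$-condition; it is recovered by replacing $k$ with $2k$, since $1 - f^{2k}(p) = (1 - f^k(p))(1 + f^k(p))$ with $|1 + f^k(p)| \leq 2$ preserves convergence, and $f^{2k}(2^m) = (f^k(2^m))^2 = 1 \neq -1$ whenever $f^k(2^m) = -1$.

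For the continuity claim I would appeal to Wiener's lemma,
$$\sum_{x \in \T} \nu(\{x\})^2 = \lim_{N \to \infty} \frac{1}{2N+1}\sum_{|k|\leq N} |M(f^k)|^2,$$
so that $\nu$ is non-atomic iff the set $\{k \in \Z : M(f^k) \neq 0\}$ has density zero. If the stated divergence fails for some $m$, then $\sum_{p:\, m a(p) \not\equiv 0 \bmod 1} \tfrac{1}{p} < \infty$, so $f^m(p) = 1$ off a thin prime set $S$ with $\sum_{p \in S} 1/p < \infty$; then $\sum_p \tfrac{1}{p}(1 - f^{jm}(p))$ converges for every $j \in \N$, and combining with the $2k$-trick above yields $M(f^{2jm}) \neq 0$ for every $j$, producing a positive-density set of non-vanishing Fourier coefficients and hence atoms of $\nu$. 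The converse direction --- that divergence for every $m$ forces $\{k : M(f^k) \neq 0\}$ to have density zero --- is the main obstacle: it amounts to running the Halász analysis uniformly in $k$ and controlling $\sum_p \tfrac{1}{p}(1 - \Re(f^k(p) p^{it}))$ across all $k$ simultaneously, and is the technical heart of Elliott's original argument.
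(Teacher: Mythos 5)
This statement is not proved in the paper at all: it is imported verbatim from Elliott's monograph (Theorems 8.1, 8.2 and the remark after Theorem 8.2 of \cite{Elliott79}), so there is no internal argument to compare yours against. Your opening reduction --- Weyl's criterion turning uniform distribution of $a$ into $M(f^k)=0$ for all $k\neq 0$, and weak convergence of the empirical distributions into existence of all the mean values $M(f^k)$ --- is the right frame, but each of the three substantive implications is left with a genuine gap.

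In part (a), the implication ``$a$ uniformly distributed $\Rightarrow$ divergence for all $t$ and all $k$'' is not established. From $M(f^k)=0$, \cref{thm:FH-thm.2.9} only gives the disjunction of conditions \ref{item:E79-thm6.3-iii} and \ref{item:E79-thm6.3-iv}; to get the divergence you must \emph{rule out} \ref{item:E79-thm6.3-iii}, and your stated reason (that it would ``contradict the divergence demanded by (a)'') assumes the conclusion you are trying to prove. The repair is to observe that if \ref{item:E79-thm6.3-iii} held for $f^k$ with parameter $t$, then for $f^{2k}$ the corresponding sum converges at $2t$ (and, by the usual uniqueness of the pretentious $t$, at no other value) while $f^{2k}(2^m)=+2^{2itm}\neq -2^{2itm}$; hence neither of the two alternatives forcing $M(f^{2k})=0$ holds, so $M(f^{2k})$ either fails to exist or is nonzero, contradicting uniform distribution. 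This step is absent. In part (b), the converse direction needs more than $M(f^{2k})\neq 0$: a limiting distribution exists only if $M(f^j)$ exists for \emph{every} $j\in\Z$, and you never show that this follows from convergence of $\sum_p\tfrac1p(1-f^k(p))$ for a single $k$. (Also, multiplying a conditionally convergent series term-by-term by the bounded factor $1+f^k(p)$ does not preserve convergence; the correct route is $1-f^{2k}=2(1-f^k)-(1-f^k)^2$ together with $\sum_p\tfrac1p|1-f^k(p)|^2=2\sum_p\tfrac1p(1-\Re f^k(p))<\infty$.) Finally, for the continuity criterion you explicitly concede that the substantive implication --- divergence for every $m$ forces the Wiener average $\tfrac{1}{2N+1}\sum_{|k|\leq N}|M(f^k)|^2$ to vanish --- is unproved; note moreover that Wiener's lemma characterizes non-atomicity by the vanishing of this average and not by the density of $\{k:M(f^k)\neq 0\}$, so even the reformulation of what remains to be shown is slightly off. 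As it stands the proposal is a plausible outline rather than a proof.
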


\cref{thm:Elliott-thm8.1-and-thm8.2} gives necessary and sufficient conditions for an additive function to have a limiting distribution.
In particular, if an additive function $a(n)$ satisfies neither condition \ref{thm8.1} nor condition \ref{thm8.2} of \cref{thm:Elliott-thm8.1-and-thm8.2} then $a(n)$ does not possess a limiting distribution. However, even in this case the limiting behavior of $a$ is well understood, as is demonstrated by \cref{thm:Elliott-thm8.9} below.
In order to formulate \cref{thm:Elliott-thm8.9}, it will be convenient to introduce first the following variant of \cref{def:nu-u.d.}.

\begin{Definition}
\label{def:nu-u.d.-2}
Let $\nu$ be a Borel probability measure on $\T$ and, for every $N\in\N$, let $x_N\colon \{1,\ldots,N\}\to\T$. Then $(x_N)_{N\in\N}$ is said to have \define{limiting distribution $\nu$} if for all continuous functions $F\in C(\T)$,
$$
\lim_{N\to\infty}\frac{1}{N}\sum_{n=1}^N F(x_N(n))=\int_\T F\, d\nu.
$$
\end{Definition}

\begin{Theorem}[see {\cite[Theorem 8.9]{Elliott79}}]\label{thm:Elliott-thm8.9}
Let $a\colon \N\to \T$ be an additive function. Then there exist $\alpha\colon \N\to\T$ and a Borel probability measure $\nu$ on $\T$ such that if $a_N\colon \{1,\ldots,N\}\to\T$ denotes the sequence
$$
a_N(n):=a(n)-\alpha(N),\qquad 1\leq n\leq N,
$$ 
then $(a_N)_{N\in\N}$ has a limiting distribution $\nu$.
Moreover, the measure $\nu$ is continuous (i.e.\ non-atomic) if and only if
$$
\sum_{p\in \P\atop m a(p)\neq 0 \bmod 1}\frac{1}{p}=\infty, \qquad\forall m\in\N.
$$
\end{Theorem}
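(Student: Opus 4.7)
The plan is to combine the Fourier-analytic (Weyl) criterion for weak convergence of measures on $\T$ with the quantitative description of mean values of unimodular multiplicative functions due to Hal\'asz. For each $k\in\Z$ set $f_k(n):=e(ka(n))$; since $a$ is additive and $\T$-valued, each $f_k$ is multiplicative with $|f_k|\equiv 1$, so $f_k\in\cm$. With $a_N(n)=a(n)-\alpha(N)$ one has
$$\frac{1}{N}\sum_{n=1}^N e\bigl(k a_N(n)\bigr) \;=\; e\bigl(-k\alpha(N)\bigr)\cdot\frac{1}{N}\sum_{n=1}^N f_k(n),$$
and $(a_N)_{N\in\N}$ will have a limiting distribution $\nu$ iff the left-hand side converges for every $k\in\Z$, in which case the limits are the Fourier coefficients $\hat\nu(k)$.

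First I would invoke Hal\'asz's theorem in its strong form: for each $f\in\cm$ with $|f|\equiv 1$, either $\tfrac{1}{N}\sum_{n\leq N}f(n)\to 0$ or there exist $A_f\in\C\setminus\{0\}$ and a (uniquely determined) $\tau_f\in\R$ such that $\tfrac{1}{N}\sum_{n\leq N}f(n) = \tfrac{A_f}{1+i\tau_f} N^{i\tau_f}+o(1)$. Applying this to each $f_k$ produces numbers $\tau_k$ and $A_k$. The multiplicative relation $f_{k+\ell}=f_k f_\ell$ together with the uniqueness of the pretention parameter (triangle inequality of the Granville--Soundararajan pretention distance) forces $\tau_k=k\tau_1$ for all $k\in\Z$. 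Set $\tau:=\tau_1$ and $\alpha(N):=\tau\log(N)/(2\pi)\bmod 1$, so that $e(k\alpha(N))=N^{ik\tau}$; then the displayed average converges to $c_k:=A_k/(1+ik\tau)$ (with $c_k:=0$ in the mean-zero case). Since every finite Ces\`aro average $\tfrac{1}{N}\sum_{n=1}^N e(k a_N(n))$ is positive definite in $k$, Herglotz's theorem furnishes a unique probability measure $\nu$ on $\T$ with $\hat\nu(k)=c_k$.

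For the continuity assertion I would invoke Wiener's identity
$$\sum_{x\in\T}\nu(\{x\})^2 \;=\; \lim_{K\to\infty}\frac{1}{2K+1}\sum_{|k|\leq K}|c_k|^2,$$
so $\nu$ is non-atomic iff the right-hand side vanishes. By Hal\'asz the quantity $|c_k|$ is controlled by $\exp\bigl(-\tfrac12\sum_{p\in\P}\tfrac{1}{p}\bigl(1-\Re(f_k(p)p^{-ik\tau})\bigr)\bigr)$. If the divergence hypothesis fails for some $m\in\N$, i.e.\ $\sum_{p:\,ma(p)\neq 0\bmod 1}1/p<\infty$, then the Euler product for $c_{jm}$ is bounded below for a density of $j$'s, producing genuine atoms on a coset of $\tfrac{1}{m}\Z/\Z$. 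Conversely, under the hypothesis a Cauchy--Schwarz plus Mertens-type estimate shows that $\frac{1}{2K+1}\sum_{|k|\leq K}|c_k|^2\to 0$, so $\nu$ is continuous.

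The main obstacle will be the rigidity step $\tau_k=k\tau_1$: even granting the existence of a distinguished phase $\tau_f$ for each unimodular multiplicative function, its compatibility with the product $f_k f_\ell=f_{k+\ell}$ is non-trivial and is precisely where the pretention-distance framework is needed (Wirsing's mean value theorem alone does not supply any phase information). A secondary but delicate point is the quantitative bridge between the divergence of $\sum_{p:\,ma(p)\neq 0}1/p$ and the Ces\`aro decay of $|c_k|^2$, since the twist $n^{-ik\tau}$ can potentially cancel $f_k(p)$ at individual primes; this is what forces one to work with the averaged (Wiener) criterion rather than with individual Fourier coefficients.
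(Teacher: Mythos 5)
The paper offers no proof of this statement---it is imported verbatim from Elliott's book (Theorem 8.9 of \cite{Elliott79})---so there is nothing internal to compare against; your characteristic-function strategy (Hal\'asz for each $f_k=e(ka(\cdot))$, Herglotz, Wiener) is indeed the standard route to results of this type. As written, however, it has two genuine gaps. The first is the centering: you set $\tau:=\tau_1$, but $\tau_1$ need not exist even when the theorem requires a nontrivial $\alpha(N)$. Take $a$ completely additive with $a(p)=\tfrac12+t_0\log p/(2\pi)\bmod 1$: then $f_1(n)=\lio(n)n^{it_0}$ has mean value zero (it is not pretentious to any $n^{it}$), while $f_2(n)=n^{2it_0}$ forces $\alpha(N)=t_0\log N/(2\pi)$. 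Your recipe outputs no $\tau$ (or $\alpha\equiv 0$), and the averages of $e(2a_N(n))$ then fail to converge. The repair is to note that the set of $k$ for which $f_k$ is pretentious to some Archimedean character is a subgroup $m_0\Z$ of $\Z$ (by the triangle inequality for the pretentious distance) and to define $\tau$ from a generator; the rigidity $\tau_{jm_0}=j\tau_{m_0}$ then does follow as you say.

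The second gap is the continuity criterion, both directions of which are asserted rather than proved. For the direction ``$\sum_{p:\,ma(p)\neq 0}1/p<\infty$ implies an atom,'' lower-bounding the Euler products for $c_{jm}$ ``for a density of $j$'s'' does not work as stated: the hypothesis constrains $a$ only at primes, the factors of the Euler product involve the unconstrained values $a(p^\ell)$ for $\ell\geq 2$, and individual factors can vanish. A cleaner route is to push $\nu$ forward under $x\mapsto mx$; the image is the limiting distribution of $ma(n)-m\alpha(N)$, and since $\sum_{p}\frac1p|1-e(ma(p))|<\infty$ one is in the situation of part (b) of \cref{thm:Elliott-thm8.1-and-thm8.2} with its continuity clause violated, so the image measure---and hence $\nu$, the map being finite-to-one---has an atom. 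For the converse, the averaged Wiener criterion is the right tool, but deducing that $\frac1J\sum_{j\leq J}|c_{jm_0}|^2$ is small from the divergence hypothesis requires showing that the exponent $\mathbb{D}(f_{jm_0},n^{ijm_0\tau};\infty)^2$ is large for \emph{most} $j$ (an averaged lower bound does not suffice, since Jensen points the wrong way), and one must also rule out that the twist $p^{-ijm_0\tau}$ realigns $f_{jm_0}(p)$ with $1$ on a set of primes with divergent $\sum 1/p$. These points constitute the actual technical content of Elliott's theorem and are missing from the sketch.
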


% =========================================================SECTION
\subsection{Additive functions with values in $\R$}\label{sec:distribution-R-valued-additive-functions}
% ================================================================

%The previous subsection was about $\T$-valued additive functions.
In this subsection we summarize some known results regarding the distribution of real-valued additive functions.
%The most important result in this direction is the Erd{\H o}s-Wintner theorem, which we state below.

Recall from \cref{ftnt-1} that an arithmetic function $a\colon \N\to\R$ is called \define{additive} if $a(n\cdot m)=a(n)+a(m)$ for all $m,n$ with $\gcd(n,m)=1$. For every additive function $a\colon \N\to\R$, the function
$$
f(n):=e^{a(n)}
$$
is a real-valued multiplicative function.

\begin{Definition}
\label{def:nu-u.d.-on-R}
Let $\nu$ be a Borel probability measure on $\R$.
A sequence $x\colon \N\to\R$ has \define{limiting distribution $\nu$} if for all bounded continuous functions $F\in C_{b}(\R)$,
$$
\lim_{N\to\infty}\frac{1}{N}\sum_{n=1}^N F(x(n))=\int_\R F\, d\nu.
$$ 
\end{Definition}

\begin{Theorem}[Erd{\H o}s-Wintner, see {\cite[Theorem 5.1]{Elliott79}}]\label{thm:Elliott-thm5.1}
An additive function $a\colon \N\to \R$ possess a limiting distribution if and only if the three series
$$
\sum_{p\in\P\atop |a(p)|>1}\frac{1}{p},\qquad\qquad
\sum_{p\in\P\atop |a(p)|\leq1}\frac{a(p)}{p},\qquad\qquad
\sum_{p\in\P\atop |a(p)|\leq1}\frac{(a(p))^2}{p}
$$
converge. In this case the corresponding measure is continuous (i.e.\ non-atmonic) if and only if
$$
\sum_{p\in\P\atop |a(p)|> 0}\frac{1}{p}=\infty.
$$
\end{Theorem}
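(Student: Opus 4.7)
The strategy is to view an additive function probabilistically. For each prime $p$, introduce an independent random variable $X_p$ taking the value $a(p^k)$ with probability $(1-\frac{1}{p})p^{-k}$ for $k\geq 0$ (with the convention $a(1)=0$). These are designed so that, for $n$ chosen uniformly in $\{1,\dots,N\}$, the random variable $a(p^{v_p(n)})$ has asymptotically the same law as $X_p$ (here $v_p$ denotes the $p$-adic valuation); this is visible from the density of $\{n : p^k \| n\}$ being $(1-\frac{1}{p})p^{-k}$. Since the events are approximately independent across different primes, one expects the empirical distribution of $a(n)$ on $\{1,\dots,N\}$ to converge to the law of $S:=\sum_p X_p$.

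For the sufficiency direction, assume the three series converge. The Kolmogorov three-series theorem then implies that $S=\sum_p X_p$ converges almost surely, and I let $\nu$ denote its distribution. To show $\frac{1}{N}\sum_{n\leq N}F(a(n))\to\int F\,d\nu$ for $F\in C_b(\R)$, I would truncate: fix $y$ and set $a_y(n):=\sum_{p\leq y,\,p^k\|n}a(p^k)$ and $S_y:=\sum_{p\leq y}X_p$. Using the Chinese remainder theorem and counting residue classes modulo $\prod_{p\leq y}p^{K}$ for large $K$, the empirical distribution of $a_y$ on $\{1,\dots,N\}$ converges to the law of $S_y$. A Tur\'an--Kubilius-type second-moment bound
\[
\frac{1}{N}\sum_{n\leq N}\bigl|a(n)-a_y(n)\bigr|^2 \ll \sum_{p>y,\,|a(p)|\leq 1}\frac{a(p)^2}{p} + \sum_{p>y,\,|a(p)|>1}\frac{1}{p} + \Bigl(\sum_{p>y,\,|a(p)|\leq 1}\frac{a(p)}{p}\Bigr)^{2}
\]
shows that $a(n)-a_y(n)$ is small in probability as $y\to\infty$, uniformly in $N$; here the convergence of the three series is used crucially. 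Combining, $\frac{1}{N}\sum_{n\leq N}F(a(n))\to\int F\,d\nu$.

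For the necessity direction, assume $a$ has a limiting distribution $\nu$. Testing against the characteristic function $F(x)=e^{itx}$, the multiplicative function $f_t(n):=e^{it a(n)}$ has a mean value equal to $\hat\nu(t)$ for every $t\in\R$. When $\hat\nu(t)\neq 0$ (which holds on a neighborhood of $0$), Hal\'asz's theorem forces $\sum_p\frac{1}{p}(1-\Re(f_t(p)p^{is}))<\infty$ for some $s=s(t)$; comparing at two nearby values of $t$ (and using $\hat\nu$ is nontrivial there) eliminates the Archimedean twist $p^{is}$ and yields $\sum_p\frac{1}{p}(1-\cos(t\,a(p)))<\infty$ for all $t$ in a neighborhood of $0$. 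Expanding $1-\cos$ via a truncation $|a(p)|\leq 1$ vs.\ $|a(p)|>1$ gives convergence of $\sum_{|a(p)|>1}\frac{1}{p}$ and $\sum_{|a(p)|\leq 1}\frac{a(p)^2}{p}$; an analogous argument using $\Im\hat\nu$ (or using that the probabilistic model $S$ must then converge) yields convergence of $\sum_{|a(p)|\leq 1}\frac{a(p)}{p}$.

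For the continuity statement, $\nu$ is the law of the infinite convolution $\ast_p\nu_p$ of the discrete measures $\nu_p=\mathrm{Law}(X_p)$, and by the Jessen--Wintner purity theorem, $\nu$ is of pure type. It is a classical fact about convolutions of independent discrete variables that $\nu$ has an atom if and only if $\sum_p(1-M_p)<\infty$, where $M_p:=\max_x\nu_p(\{x\})$ is the largest atom-weight of $\nu_p$. Now $\nu_p(\{0\})\geq 1-\frac{1}{p}$ (with contribution from $k=0$), and when $a(p)\neq 0$ all atoms of $\nu_p$ except possibly $0$ lie away from $0$, giving $M_p=1-\frac{1}{p}$ and $1-M_p=\frac{1}{p}$; when $a(p)=0$, the atoms $a(p^0)$ and $a(p^1)$ coincide and $M_p\geq 1-\frac{1}{p^2}$, so $1-M_p\ll\frac{1}{p^2}$ is summable. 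Consequently $\sum_p(1-M_p)=\infty$ is equivalent to $\sum_{a(p)\neq 0}\frac{1}{p}=\infty$, yielding the continuity criterion. The main technical obstacle is the necessity direction, which requires delicate mean-value estimates; the Jessen--Wintner purity step also needs to be invoked with care to isolate the role of the atom at $0$ in each $\nu_p$.
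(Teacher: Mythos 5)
The paper does not prove this statement at all: it is the classical Erd\H{o}s--Wintner theorem, quoted verbatim from Elliott's book (Theorem 5.1 there) and used as a black box to derive Corollary 2.11. So there is nothing in the paper to compare against line by line; your proposal has to be judged against the standard literature proof, and it is in fact essentially that proof: the probabilistic model $X_p$ with $\P(X_p=a(p^k))=(1-\tfrac1p)p^{-k}$, Kolmogorov's three-series theorem plus a Tur\'an--Kubilius truncation for sufficiency, Hal\'asz (or equivalently the study of the mean values of $n\mapsto e^{ita(n)}$) for necessity, and L\'evy's purity/atom criterion for infinite convolutions for the continuity statement. The architecture is sound and complete.

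Three points need repair before this is a proof rather than a sketch. First, the displayed second-moment bound is not literally true: the primes $p>y$ with $|a(p)|>1$ contribute $\sum_{p>y}a(p)^2/p$ to the $L^2$ norm, which may diverge; you must split $a-a_y$ into the part supported on prime powers with $|a(p^k)|\leq 1$ (controlled in $L^2$ by Tur\'an--Kubilius) and the part supported on prime powers with $|a(p^k)|>1$ (controlled in density by $\sum_{p>y,\,|a(p)|>1}\tfrac1p$, since the set of $n\leq N$ divisible by such a prime has density at most this sum). Your stated conclusion (``small in probability'') is the right one, but it does not follow from the inequality as written. Second, in the necessity direction the detour through the Archimedean twist is unnecessary: in Hal\'asz's trichotomy a \emph{nonzero} mean value forces case (i), where no twist $p^{is}$ appears, so $\hat\nu(t)\neq0$ directly gives convergence of $\sum_p\tfrac1p(1-e^{ita(p)})$. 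On the other hand, passing from $\sum_p\tfrac1p(1-\cos(ta(p)))<\infty$ at fixed $t$ to convergence of the first and third series is not a matter of ``expanding $1-\cos$'': for a single $t$ the quantity $1-\cos(ta(p))$ can vanish even when $|a(p)|$ is large, so one must integrate over $t$ in a neighborhood of $0$ (using $\int_0^T(1-\cos(ta))\,dt\geq T/2$ for $|a|\geq 2/T$) to extract $\sum_{|a(p)|>1}\tfrac1p<\infty$ and $\sum_{|a(p)|\leq1}a(p)^2/p<\infty$, after which the $\sin$ expansion gives the middle series. Third, in the continuity step the claim $M_p=1-\tfrac1p$ when $a(p)\neq0$ is slightly off, since higher prime powers with $a(p^k)=0$ add mass to the atom at $0$; what is true and sufficient is $\tfrac1p-\tfrac1{p^2}\leq 1-M_p\leq\tfrac1p$ when $a(p)\neq0$ and $1-M_p\leq\tfrac1{p^2}$ when $a(p)=0$, which still yields the equivalence with $\sum_{a(p)\neq0}\tfrac1p=\infty$.
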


\begin{Corollary}
\label{cor:Elliott-thm5.1}
Let $f\in\cm$ be a multiplicative function taking values in $(0,1]$ and assume $\|f\|_1\neq 0$. Then $f(n)$ possesses a limiting distribution.
This limiting distribution is continuous (i.e.\ the corresponding measure $\nu$ is non-atomic) if and only if $\sum_{p\in\P\atop f(p)\neq 1}\tfrac{1}{p}=\infty$.
\end{Corollary}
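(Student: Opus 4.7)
The plan is to reduce this corollary to the Erdős–Wintner theorem (\cref{thm:Elliott-thm5.1}) by passing to the logarithm. Since $f$ takes values in $(0,1]$, the function $a\colon \N \to \R$ defined by $a(n) := \log f(n)$ is real-valued and nonpositive, and the multiplicativity of $f$ makes $a$ additive. The strategy is: (i) verify the three Erdős–Wintner series for $a$ using the hypothesis $\|f\|_1 \neq 0$; (ii) push the resulting limiting distribution on $\R$ forward by $\exp$ to obtain one for $f$; (iii) read off the non-atomicity criterion from \cref{thm:Elliott-thm5.1}.

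For step (i), since $f$ is real and nonnegative we have $|f|=f$, so \cref{lem:lH2-C} together with $\|f\|_1 \neq 0$ yields $S := \sum_{p\in\P} \tfrac{1-f(p)}{p} < \infty$. Split the primes into the sets $\{p: f(p) < 1/e\}$ and $\{p: f(p) \geq 1/e\}$. On the first set, $|a(p)| > 1$ and $1 - f(p) \geq 1 - 1/e$, so $\sum_{|a(p)|>1} \tfrac{1}{p} \leq (1-1/e)^{-1} S < \infty$. On the second set, $|a(p)| = -\log f(p) \leq C(1-f(p))$ with $C := \sup_{x \in [0,1-1/e]} \tfrac{-\log(1-x)}{x} < \infty$, which dominates the second Erdős–Wintner series; the third is then controlled by $a(p)^2 \leq |a(p)|$ on $\{|a(p)|\leq 1\}$. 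Hence \cref{thm:Elliott-thm5.1} gives a Borel probability measure $\tilde\nu$ on $\R$ that is the limiting distribution of $a(n)$.

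For step (ii), given any $F \in C_b(\R)$, define $\tilde F \in C_b(\R)$ by $\tilde F(x) := F(e^x)$ for $x \leq 0$ and $\tilde F(x) := F(1)$ for $x > 0$ (continuous at $0$ and bounded). Then $\tilde F(a(n)) = F(f(n))$ for every $n$, so
$$
\frac{1}{N}\sum_{n=1}^N F(f(n)) = \frac{1}{N}\sum_{n=1}^N \tilde F(a(n)) \longrightarrow \int_\R \tilde F\, d\tilde\nu = \int_\R F\, d\nu,
$$
where $\nu := \exp_*\tilde\nu$ is a Borel probability measure on $\R$ supported in $(0,1]$. Thus $f(n)$ possesses the limiting distribution $\nu$. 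For step (iii), since $\exp\colon \R \to (0,\infty)$ is a homeomorphism, $\nu$ is non-atomic iff $\tilde\nu$ is, and by \cref{thm:Elliott-thm5.1} this holds iff $\sum_{p:\, a(p)\neq 0} \tfrac{1}{p} = \infty$; as $a(p) \neq 0 \Leftrightarrow f(p) \neq 1$, this is exactly the stated criterion.

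No substantive obstacle is anticipated: the argument is essentially a change of variables combined with the quantitative relation $-\log(1-x) \asymp x$ near $0$. The only point requiring care is the uniform logarithm estimate on $[0,1-1/e]$, which is needed to ensure that convergence of $\sum_p (1-f(p))/p$ transfers to convergence of the Erdős–Wintner series for $a$.
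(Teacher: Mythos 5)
Your proof is correct and follows essentially the same route as the paper: pass to the additive function $a=\log f$, verify the three Erd\H{o}s--Wintner series of \cref{thm:Elliott-thm5.1} using \cref{lem:lH2-C} together with a comparison $-\log x\leq C(1-x)$ on $[1/e,1]$, and push the resulting distribution on $\R$ forward under $\exp$. Incidentally, your constant $C=\sup_{x\in[0,1-1/e]}(-\log(1-x))/x=e/(e-1)$ is the correct one; the paper's displayed inequality $\tfrac{1}{e}(1-x)\geq-\log x$ on $[1/e,1]$ is a slip (it fails already at $x=1/e$), though the conclusion there is unaffected.
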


\begin{proof}
Let $a\colon \N\to\R$ denote the additive function $a(n):=\log(f(n))$. Note that $f$ has a limiting distribution if and only if $a$ has one.

We have $|a(p)|>1$ if and only if $f(p)\in\left(0,\tfrac{1}{e}\right)$. Since $\|f\|_1\neq 0$, it follows from \cref{lem:lH2-C} that $\sum_{p\in\P}\frac1p\big(1-f(p)\big)<\infty$. Therefore
$$
\sum_{p\in\P\atop |a(p)|>1}\frac{1}{p}~=~
\sum_{p\in\P\atop f(p)\in \left(0,e^{-1}\right)}\frac{1}{p}~\leq~\frac{e}{e-1}\sum_{p\in\P}\frac1p\big(1-f(p)\big)~<~\infty.
$$
Also, using the basic inequality $\tfrac{1}{e}(1-x)\geq -\log(x)$ for all $x\in \left[\tfrac{1}{e},1\right]$, we obtain
\begin{eqnarray*}
\sum_{p\in\P\atop |a(p)|\leq1}\frac{(a(p))^2}{p}
&\leq&
\left|\sum_{p\in\P\atop |a(p)|\leq1}\frac{a(p)}{p}\right|
\\
&=&
\sum_{p\in\P\atop f(p)\in \left[e^{-1},1\right]}\frac{-\log(f(p))}{p}
\\
&\leq&
\frac{1}{e}\left(\sum_{p\in\P\atop f(p)\in \left[e^{-1},1\right]}\frac{1}{p}\left(1-f(p)\right)\right)
\\
&\leq&\frac{1}{e}\left(\sum_{p\in\P}\frac1p\big(1-f(p)\big)\right)^2.
\end{eqnarray*}
Therefore, the three series
$$
\sum_{p\in\P\atop |a(p)|>1}\frac{1}{p},\qquad\qquad
\sum_{p\in\P\atop |a(p)|\leq1}\frac{a(p)}{p},\qquad\qquad
\sum_{p\in\P\atop |a(p)|\leq1}\frac{(a(p))^2}{p}
$$
converge and hence $a(n)$ possesses a distribution. Clearly, $f$ possesses a continuous distribution if and only if $a$ does, which is the case (by \cref{thm:Elliott-thm5.1}) if and only if
$\sum_{p\in\P\atop |a(p)|> 0}\frac{1}{p}=\sum_{p\in\P\atop f(p)\neq 1}\tfrac{1}{p}=\infty$.
\end{proof}

% =======================================================SUBSECTION
\section{Extending the K{\'a}tai orthogonality criterion}
\label{sec:Katai-all}
\label{sec:a(f)-and-proof-of-thmA}
\label{sec:KOC}
% ================================================================

In \cref{sec:intro} we introduced the classes $\Done{\infty}$, $\DtwoA$ and $\DtwoB$; the statement of \cref{thm:Katai-for-sets} holds for any set $E$ belonging to either one of these two classes. In this section we will state and prove a generalization of \cref{thm:Katai-for-sets} where $\Done{\infty}$, $\DtwoA$ and $\DtwoB$ are replaced by the more general classes $\Dthree{\infty}$ and $\Dfour$ defined in the next subsection. This generalization is given by \cref{thm:Katai-for-sets-2} formulated in Subsection \ref{sec:thmN}.

% =======================================================SUBSECTION
\subsection{Definition of $\Dthree{\infty}$ and $\Dfour$}
\label{sec:a(f)}
% ================================================================

Let $r\in\N$. A function $\vec{f}=(f_1,\ldots,f_r)\colon\N\to\C^r$ is called \define{multiplicative} if each of its coordinate components $f_i\colon\N\to\C$ is a multiplicative function. 
In accordance with the definition of concentration points for multiplicative functions $f\colon\N\to\C$ (cf. \cref{def:c1c2}), we say that a point $\vec{z}\in \C^r$ is a \define{concentration point} for a multiplicative function $\vec{f}\colon\N\to\C^r$ if the set $P:=\{p\in\P: \vec{f}(p)=\vec{z}\}$ satisfies $\sum_{p\in P}\frac{1}{p}=\infty$.

\begin{Definition}
\label{def:c3}
We denote by $\Dthree{r}$ the collection of all sets $E\subset \N$ of the form
$$
E(\vec{f},K):=\{n\in\N:\vec{f}(n)\in K\},
$$
where $K$ is an arbitrary subset of $\C^r$ and $\vec{f}\colon\N\to\C^r$ is a multiplicative function possessing at least one concentration point. Observe that $\DtwoA=\Dthree{1}$ and $\Dthree{i}\subset \Dthree{j}$ for $i\leq j$. We define $\Dthree{\infty}:=\bigcup_{r=1}^{\infty}\Dthree{r}$.
\end{Definition}

\begin{Proposition}
\label{prop:e-sets-1}
If $E\in \Done{r}$ and $d(E)>0$ then $E\in \Dthree{r}$.
\end{Proposition}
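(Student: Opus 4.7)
Set $\vec{f}:=(f_1,\ldots,f_r)\colon\N\to\C^r$ and $\vec{z}:=(z_1,\ldots,z_r)$, so that $E=E(\vec{f},\{\vec{z}\})$. By \cref{def:c3}, we will have $E\in\Dthree{r}$ as soon as we verify that $\vec{f}$ itself admits at least one concentration point (whereupon we may take $K:=\{\vec{z}\}$). The whole problem therefore reduces to proving the implication \emph{$d(\{n\colon\vec{f}(n)=\vec{z}\})>0 \;\Longrightarrow\;$ there exists $\vec{w}\in\C^r$ (possibly different from $\vec{z}$) with $\sum_{p\in\P,\,\vec{f}(p)=\vec{w}}\frac{1}{p}=\infty$}.

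I would establish this implication by contrapositive: assume that no $\vec{w}\in\C^r$ is a concentration point of $\vec{f}$, and deduce that $d(\{n\colon\vec{f}(n)=\vec{z}\})=0$ for every $\vec{z}$. The intuition is that the non-concentration hypothesis forces the prime values $\vec{f}(p)$ to disperse across infinitely many distinct points of $\C^r$, each visited by too few primes to matter individually; because $\vec{f}(n)$ is a ``multiplicative random walk'' driven by these prime values, the limiting empirical distribution of $\vec{f}(n)$ on $\C^r$ must be non-atomic, so no particular value $\vec{z}$ can be attained with positive density. The most direct implementation would be to invoke the proof of Ruzsa's density theorem for level sets of multiplicative functions (cf.\ \cite[Corollary~1.6 and the subsequent remark]{Ruzsa77}), which produces an Euler-product-type formula for $d(E)$ whose local factors combine to a non-zero limit precisely when the prime values $\vec{f}(p)$ concentrate at some $\vec{w}$; in the absence of such concentration the product vanishes, giving $d(E)=0$. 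An alternative implementation would go through Hal\'asz's theorem (\cref{thm:FH-thm.2.9}) applied to each single multiplicative function $g_{\vec{k}}(n):=\prod_{i=1}^r f_i(n)^{k_i}$ for $\vec{k}\in\Z^r\setminus\{\vec{0}\}$, showing that the non-concentration hypothesis places each $g_{\vec{k}}$ into case \ref{item:E79-thm6.3-iv} so that $M(g_{\vec{k}})=0$, and then converting the vanishing of all these mean values into non-atomicity of the limiting distribution of $\vec{f}(n)$ via a Weyl-type argument on the compact abelian group containing the closure of $\vec{f}(\N)$.

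The principal obstacle is adapting Ruzsa's density formula from the scalar setting $f\colon\N\to\C$ with $|f|\le 1$ (where it is typically stated) to the vector-valued setting $\vec{f}\colon\N\to\C^r$ with no a priori bound on $|\vec{f}(p)|$. Two subtleties arise: first, one needs a concentration point for the joint $\vec{f}$ rather than for the individual $f_i$'s, and concentration points of each $f_i$ need not assemble into one for $\vec{f}$; second, when some $z_i=0$ the level set is governed by prime powers rather than primes alone (compare $\mob$ in \cref{item:eg:multiplicative-functions-1.5}, where $\mob(n)=0$ on a set of positive density although $\sum_{p\colon\mob(p)=0}\frac{1}{p}=0$, yet the required concentration point $-1$ is still present via $\mob(p)=-1$ for every prime $p$). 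The reduction from unbounded to bounded $f_i$ can be handled by truncating the $f_i$ at prime powers without altering $E$, and the passage from scalar to vector-valued Ruzsa can be carried out by a joint Fourier analysis on the closure of $\vec{f}(\N)$ in a suitable compactification of $\C^r$.
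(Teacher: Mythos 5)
Your opening reduction is where the argument breaks: it is \emph{not} true that $d(E(\vec f,\{\vec z\}))>0$ forces the original $\vec f$ to have a concentration point, so the implication you set out to prove (and the contrapositive you sketch) is false. Already for $r=1$: let $(p_j)_{j\in\N}$ enumerate the primes and take $f(p_j)=e(\xi_j)$ with $(\xi_j)$ pairwise distinct, $f(p^2)=0$ and $f(p^k)=1$ for $k\geq 3$. Then $f$ takes pairwise distinct nonzero values at the primes, so no $w\in\C$ satisfies $\sum_{p\in\P,\,f(p)=w}\frac1p=\infty$, yet $E(f,0)\supseteq\{n:4\mid n,\ 8\nmid n\}$ has density at least $1/8$. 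Your discussion of the $z_i=0$ subtlety notices exactly this danger but then wrongly dismisses it on the strength of the $\mob$ example, where a concentration point happens to survive. The correct move --- and what the paper does --- is to change the representing function: membership in $\Dthree{r}$ only requires that $E$ equal $E(\vec g,K)$ for \emph{some} multiplicative $\vec g$ with a concentration point. Coordinatewise, one replaces $f_i$ by $\1_{\{f_i\neq 0\}}$ when $z_i=0$, and by the concentrated function supplied by \cref{cor:density->concentrated function} (a consequence of Ruzsa's scalar theorem, applicable since $E\subset E(f_i,z_i)$ gives $d(E(f_i,z_i))>0$) when $z_i\neq 0$; in both cases $E(f_i,z_i)=E(g_i,z_i)$ is preserved.

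This repair also dissolves what you call the principal obstacle: no vector-valued Ruzsa theorem and no joint Fourier or Hal\'asz analysis on a compactification of $\C^r$ is needed. Each $g_i$, being concentrated (or $\{0,1\}$-valued), takes only finitely many values on a set of primes $P_i$ with $\sum_{p\in\P\setminus P_i}\frac1p<\infty$; hence on $P_1\cap\cdots\cap P_r$ the tuples $(g_1(p),\ldots,g_r(p))$ range over a finite set, and the pigeonhole principle yields a single tuple $\vec w$ attained on a set $P$ of primes with $\sum_{p\in P}\frac1p=\infty$ (this is \cref{lem:harmonic-sets}). That produces the concentration point for $\vec g$ directly and settles your worry that concentration points of the individual coordinates need not assemble into one for the vector. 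Finally, your proposed ``truncation at prime powers without altering $E$'' is not spelled out and, as the counterexample shows, cannot by itself restore a concentration point for the original $\vec f$.
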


A proof of \cref{prop:e-sets-1} will be given in Subsection \ref{sec:proof-of-e-sets-1}.

In order to introduce the class $\Dfour$ we need the following definition.

\begin{Definition}\label{def:n(f)}
Let $f\colon\N\to\C$ be an arithmetic function.
We define $\cn(f)$ -- the class of \define{$f$-null sets} -- to be the collection of all sets $C\subset\C\setminus\{0\}$ such that
for all $\epsilon>0$ there exists a continuous function $F\colon \C\to[0,1]$ satisfying $F(z)=1$ for all $z\in C$ and
$$
\limsup_{N\to\infty}\frac{1}{N}\sum_{1\leq n\leq N\atop f(n)\neq 0} F(f(n))\leq \epsilon.
$$
\end{Definition}

In many cases multiplicative functions have a limiting distribution corresponding to a Borel probability measure $\nu$ (cf.\ Subsections \ref{sec:distribution-additive-functions} and \ref{sec:distribution-R-valued-additive-functions}). If this is the case then the class of $f$-null sets coincides with the class of $\nu$-null sets, i.e.\ all sets $C$ that satisfy $\nu(C)=0$.
For instance, if $f=\lio_{\xi}$ for some irrational $\xi\in\T$, then $(\lio_{\xi}(n))_{n\in\N}$ is uniformly distributed in the unit circle $\mathbb{S}^1:=\{z\in\C:|z|=1\}$ (by \cref{thm:Elliott-thm8.1-and-thm8.2} part \ref{thm8.1}). It is then straightforward to verify that a set $C\subset \C$ belongs to $\cn(\lio_{\xi})$ if and only if $C\cap \mathbb{S}^1$ has zero measure with respect to the Lebesgue measure on $\mathbb{S}^1$.

In the following let $\partial J:= \overline{J}\setminus J^\circ$ denote the boundary of a set $J\subset\C$.

\begin{Definition}
\label{def:afc3}
\
\begin{enumerate}
[label=\text{(\alph*)}, ref=\text{(\alph*)}, leftmargin=*]
\item
Given a multiplicative function $f$ define $\ca^*(f):=\{J\subset \C\setminus\{0\}:\partial J\in\cn(f)\}$ and
$$
\ca(f):= \ca^*(f)\cup \{J\cup\{0\}: J\in \ca^*(f)\}.
$$
%$\ca(f)$ denote the algebra generated by $\{J\subset \C\setminus\{0\}:\partial J\in\cn(f)\}$ and the singleton $\{0\}$.
It is straightforward to check that both $\ca^*(f)$ and $\ca(f)$ are algebras, i.e.~they are closed under finite unions, finite intersections and taking complements. 
\item
We denote by $\Dfour$ the collection of all sets $E\subset \N$ of the form $E(f,K):=\{n\in\N:f(n)\in K\}$, where $f\in\cm$ with $\|f\|_1\neq 0$, and $K\in\ca(f)$.
\end{enumerate}
\end{Definition}

\begin{Proposition}
\label{prop:e-sets-2}
We have $\DtwoB\subset \Dfour\cup \Dthree{1}$.
\end{Proposition}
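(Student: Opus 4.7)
The plan is to split the argument on whether the multiplicative function $f$ in the representation $E = E(f,K)$ admits a concentration point. If $f$ has a concentration point, then $E \in \DtwoA = \Dthree{1}$ directly: the definition of $\DtwoA$ places no restriction whatsoever on the set $K \subset \C$, so the same $f$ and $K$ exhibit $E$ as a member of $\Dthree{1}$. So the substantive task is the case when $f$ has no concentration point, and we aim to show $E \in \Dfour$.

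Here one exploits that $\Dfour$ only requires \emph{some} representation $E = E(f,K')$ with $K' \in \ca(f)$, giving freedom to modify $K$ outside the range of $f$. Since $K$ is a finite union of polar rectangles and $\ca(f)$ is an algebra, one reduces to a single polar rectangle $R = \{re^{2\pi i\varphi}: r \in I_1, \varphi \in I_2\}$, and then forms $K'$ by taking unions across the rectangles. The natural candidate for $K'$ (for a single rectangle) is a modified angular sector whose boundary $\partial K' \cap (\C \setminus \{0\})$ consists of at most two half-rays emanating from the origin (at angles in $\partial I_2$) and at most two circular arcs (at radii in $\partial I_1'$, where $I_1'$ is a suitably perturbed version of $I_1$).

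To check that $K' \in \ca^*(f)$, one verifies that each component of $\partial K'$ lies in $\cn(f)$. For the half-rays, the key tool is \cref{thm:Elliott-thm8.9} applied to the additive function $a(n) = \arg f(n)/(2\pi)$: the no-concentration-point hypothesis implies $\sum_{p: m a(p) \not\equiv 0\bmod 1} 1/p = \infty$ for every $m \in \N$, yielding a continuous (shifted) limiting distribution $\nu$ on $\T$; a uniform-continuity argument on the compact group $\T$ then gives $\sup_x \nu([x-\delta, x+\delta]) \to 0$ as $\delta \to 0$, whence the density of $\{n: a(n) \in (\varphi-\delta, \varphi+\delta)\}$ is small uniformly in $N$, placing each half-ray in $\cn(f)$. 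For the circular arcs, one applies \cref{cor:Elliott-thm5.1} to $|f|$: the limiting distribution of $|f|$ on $(0,1]$ has at most countably many atoms, and for $r$ outside this atomic set, $\{n: |f(n)| \in (r-\delta, r+\delta)\}$ has vanishing density as $\delta \to 0$.

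The main obstacle is the construction of $I_1'$ that avoids atoms of $|f|$ while preserving the literal equality $\{n: |f(n)| \in I_1'\} = \{n: |f(n)| \in I_1\}$. When $|f|$'s distribution has an atom at $\sup I_1$ or $\inf I_1$ (for instance, the ubiquitous atom at $1$ that arises whenever $|f(p)| = 1$ for most primes), one must shift the endpoint across a gap in the range of $|f|$; the feasibility of this rests on a structural claim that between the atoms of $|f|$ the range admits genuine gaps, which in turn comes from an Erd\H{o}s--Wintner-type analysis of $\log|f|$. An additional technicality is the treatment of the origin: when the range of $|f|$ accumulates at $0$, the set $K'$ must incorporate $0$ via the $J \cup \{0\}$ clause in the definition of $\ca(f)$, which requires ensuring that $0 \notin \partial J$ after appropriate radial truncation of the sector.
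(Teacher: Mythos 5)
Your opening reduction (if $f$ has a concentration point then $E\in\Dthree{1}$; otherwise aim for $\Dfour$) and the reduction to a single polar rectangle via the algebra property of $\ca(f)$ coincide with the paper's. The gap is in your treatment of the angular boundary. You assert that the absence of concentration points forces $\sum_{p\colon m a(p)\not\equiv 0\bmod 1}\frac1p=\infty$ for every $m$, hence a continuous shifted limiting distribution for $a(n)=\arg f(n)/2\pi$. This implication is false: a concentration point of $f$ requires the full value $f(p)$ --- modulus and argument together --- to recur along a set of primes with divergent harmonic sum, so the argument alone may be completely degenerate while $f$ still has no concentration point. Concretely, $f(n)=\tot(n)/n$ lies in $\cm$, has $\|f\|_1\neq 0$, and has no concentration point (the values $f(p)=1-1/p$ are pairwise distinct), yet $a(p)=0$ for every prime, so your sum is finite (indeed empty) for every $m$. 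For such an $f$ the positive real axis carries full density, so a half-ray through an endpoint angle of $I_2$ need not lie in $\cn(f)$, and your verification for the angular pieces of $\partial K'$ collapses. What the absence of concentration points actually yields (together with $\|f\|_1\neq 0$, via \cref{lem:lH2-C} and a pigeonhole over $m$-th roots of unity) is a dichotomy: \emph{either} $\sum_{p\colon |f(p)|\neq 1}\frac1p=\infty$, in which case $|f|$ has a continuous limiting distribution (\cref{cor:Elliott-thm5.1}), \emph{or} $\sum_{p\colon m a(p)\not\equiv 0\bmod 1}\frac1p=\infty$ for all $m$, in which case the shifted angular distribution is continuous (\cref{thm:Elliott-thm8.9}). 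Only one of the two alternatives is guaranteed, and the paper's proof branches on precisely this trichotomy (its case (i) being the impossible one); in the branch where the angular distribution is atomic, the angular boundary must be disposed of by other means than your continuity argument.

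A secondary gap: your radial repair rests on the ``structural claim that between the atoms of $|f|$ the range admits genuine gaps,'' which you neither prove nor correctly reduce to the Erd\H{o}s--Wintner theorem; the essential range of $|f|$ can accumulate at an atom from both sides, so the claim is not available for arbitrary $f\in\cm$ with $\|f\|_1\neq 0$. The paper avoids both difficulties by never perturbing $K$: it shows directly that $K\in\ca(f)$ by exhibiting, for each $\epsilon>0$, a continuous $F$ equal to $1$ on $\partial K$ with small average of $F(f(n))$, built either from tent functions around the boundary angles (with a truncation near $0$ supplied by \cref{lem:lH-balls-at-zero-new}) or from a function of $|z|$ alone, according to which limiting distribution is continuous. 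Consequently the delicate problem of moving the boundary of $K$ while preserving the exact preimage $E(f,K)$ never arises there.
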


A proof of \cref{prop:e-sets-2} is given in Subsection \ref{sec:proof-of-thmA}.

We will introduce and discuss now two pertinent families of general examples of sets belonging to $\Dthree{\infty}$ and/or $\Dfour$.
%which include as rather special cases the examples listed in the Introduction (\cref{example:mf}).

\begin{Example}\label{example:mf-d3d4}
%The following are examples of sets belonging to $\cd_0$, $\cd_1$ or $\cd_2$:
\
\begin{enumerate}
[label=\text{Ex.\ref{example:mf-d3d4}.\arabic*:}, ref=\text{Ex.\ref{example:mf-d3d4}.\arabic*}, leftmargin=*]
\item
\label{item:eg:multiplicative-fiber-2-d3d4}
Let $\alpha_1,\ldots,\alpha_t,\beta_1,\ldots,\beta_t$ be real numbers and let $J_1,\ldots,J_t,I_1,\ldots,I_t$ be arbitrary subsets of $[0,1)$.
Consider the set
$$
E:=\{n\in\N: \Omega(n)\alpha_i\bmod 1 \in J_i~\text{and}~\omega(n)\beta_i\bmod 1 \in I_i~\text{for all}~i\in\{1,\ldots,t\}\}.
$$
Then $E$ belongs to the class $\Dthree{2t}$ because it can be written as
\begin{eqnarray*}
E
%&=&E(\lio_{\alpha_1},\ldots,\lio_{\alpha_t},\boldsymbol{\kappa}_{\beta_1},\ldots,\boldsymbol{\kappa}_{\beta_t},J_1',\ldots,J_t',I_1',\ldots,I_t')\\
&=&\{n\in\N: \lio_{\alpha_i}(n) \in J_i'~\text{and}~\boldsymbol{\kappa}_{\beta_1} \in I_i'~\text{for all}~i\in\{1,\ldots,t\}\},
\end{eqnarray*}
where $\lio_{\xi}$ and $\boldsymbol{\kappa}_\xi$ are as defined in \ref{item:eg:multiplicative-functions-5} and $J_i':=\{e(x):x\in J_i\}$ and $I_i':=\{e(x):x\in I_i\}$.
Similarly, one can show that the sets $S_{\Omega,b_1,r_1}$, $S_{\omega,b_2,r_2}$, $S_{\omega,b_1,r_1}\cap S_{\Omega,b_2,r_2}$, $S_{\Omega,\alpha,J}$ and $S_{\omega,\alpha,J}$ from \cref{example:mf} belong to $\Dzero$, $\Done{2}$ and $\DtwoA$ respectively; in particular, they all belong to $\Dthree{\infty}$.
\item\label{item:eg:multiplicative-fiber-6-d3d4}
Let $f\colon \N\to\N$ be a multiplicative function and let $b,r\in\N$ with $\gcd(b,r)=1$. Let $t$ denote the number of generators of $(\Z/b\Z)^*$. We claim that the set
$$
E:=\{n\in\N: f(n)\equiv r\bmod b\}
$$
belongs to $\Done{t}$.
For the proof of this claim, choose $b_1,b_2,\ldots,b_t\in\N$ with $b=b_1\cdot\ldots\cdot b_t$ and such that
$(\Z/b\Z)^*$ is isomorphic to $C_{b_1}\times\ldots\times C_{b_t}$, where $C_n$ denotes the finite cyclic group of order $n$.
For $i\in\{1,\ldots,t\}$ let $c_i$ denote a generator of $C_{b_i}$.
We can identify $r$ with an element $(c_1^{r_1},\ldots,c_t^{r_t})\in C_{b_1}\times\ldots\times C_{b_t}$, where $r_i\in\{0,1,\ldots,b_i-1\}$ for all $i\in\{1,\ldots,t\}$. For $i\in\{1,\ldots,t\}$ define $\tilde\chi_i\colon  C_{b_1}\times\ldots\times C_{b_t}\to\C$ as
$$
\tilde\chi_i\left(c_1^{s_1},\ldots,c_t^{s_t}\right):=e\left(\frac{s_i}{b_i}\right).
$$
Then $\tilde\chi_i$ can be identified with a Dirichlet character $\chi_i$ of modulus $b$ via the isomorphism $(\Z/b\Z)^*\cong C_{b_1}\times\ldots\times C_{b_t}$. It is clear that
$$
\{n\in\N: n\equiv r\bmod b\}=\left\{n\in\N: \chi_1(n)=e\left(\frac{r_1}{b_1}\right),\ldots,\chi_t(n)=e\left(\frac{r_t}{b_t}\right)\right\}
$$
and therefore
$$
E=\left\{n\in\N: \chi_1(f(n))=e\left(\frac{r_1}{b_1}\right),\ldots,\chi_t(f(n))=e\left(\frac{r_t}{b_t}\right)\right\}.
$$
This proves that the set $E$ belongs to $\Done{t}$. In particular, by choosing $f=\boldsymbol{\tau}$, we see that the set $S_{\boldsymbol{\tau},b,r}$ from \ref{item:eg:multiplicative-fiber-6} belongs to $\Done{t}$.
\end{enumerate}
\end{Example}

% =======================================================SUBSECTION
\subsection{A generalization of \cref{thm:Katai-for-sets}}
\label{sec:thmN}
% ================================================================

In light of Propositions \ref{prop:e-sets-1} and \ref{prop:e-sets-2} it is clear that the following result is a generalization of \cref{thm:Katai-for-sets}.

\begin{Theorem}
\label{thm:Katai-for-sets-2}
%\label{thm:tKO-points}
\label{thm:Katai-multipliactive-fibers-2}
Let $a\colon \N\to\C$ be a bounded sequence satisfying
\begin{equation}
\label{eq:KOC-a}
\sum_{n\leq x} a(pn)\overline{a(qn)} = \oh(x),~\text{for all $p,q\in\P$ with $p\neq q$.}
\end{equation}
Then for all sets $E\subset\N$ belonging to either $\Dthree{\infty}$ or $\Dfour$ we have
\begin{equation}
\label{eq:kc-conclusion-sets-a}
\sum_{n\leq x} \1_{E}(n)a(n)=\oh(x).
\end{equation}
\end{Theorem}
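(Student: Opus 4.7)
The plan is to treat the two cases $E\in\Dfour$ and $E\in\Dthree{\infty}$ separately, since they call for rather different strategies.

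For $E=E(f,K)\in\Dfour$, I would reduce directly to the original K{\'a}tai criterion (\cref{thm:org-Katai}) by a two-step approximation of $\1_E$. Since $\partial K\in\cn(f)$, the defining property of $f$-null sets lets us sandwich $\1_E$ between two continuous cut-offs $H\circ f\leq\1_E\leq G\circ f$ with $G,H\colon\C\to[0,1]$ and $\|G\circ f-H\circ f\|_1\leq\epsilon$ (the possible presence of $0\in K$ requires splitting off $\{n:f(n)=0\}$ and running a parallel approximation near the origin, using $\|f\|_1\neq 0$ to control the density of $\{f=0\}$). By Stone--Weierstrass we may then uniformly $\delta$-approximate $G$ on the closed unit disk by a polynomial $P(z,\bar z)=\sum c_{j,k}z^j\bar z^k$. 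Each $f^j\bar f^k$ is a bounded multiplicative function, so \cref{thm:org-Katai} applies to each summand and yields $\sum_{n\leq x}f(n)^j\overline{f(n)}^k a(n)=\oh(x)$. Together with the $L^\infty$ bound on $a$, this gives $\sum_{n\leq x}\1_E(n)a(n)=\oh(x)+O\bigl((\epsilon+\delta)x\bigr)$, and $\epsilon,\delta$ are arbitrary.

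For $E=E(\vec f,K)\in\Dthree{\infty}$ with concentration point $\vec z$, let $P:=\{p\in\P:\vec f(p)=\vec z\}$, so that $\sum_{p\in P}1/p=\infty$. The structural observation driving the argument is the pseudo-multiplicativity $\vec f(pn)=\vec z\cdot\vec f(n)$ for $p\in P$ and $\gcd(p,n)=1$, whence $\1_E(pn)=g(n)$ where $g(n):=\1_{\{\vec z\cdot\vec f(n)\in K\}}$ is independent of which $p\in P$ we pick. Mimicking K{\'a}tai's original argument, I would then compute, in two different ways and for a parameter $y$, the double sum $T(x,y):=\sum_{p\in P,\,p\leq y}\sum_{n\leq x/p}\1_E(pn)a(pn)$. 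Writing $m=pn$ and setting $\omega_{P,y}(m):=|\{p\in P\cap[1,y]:p\mid m\}|$ gives $T(x,y)=\sum_{m\leq x}\1_E(m)a(m)\omega_{P,y}(m)$; the Tur{\'a}n--Kubilius estimate $\sum_{m\leq x}(\omega_{P,y}(m)-W_P)^2\ll xW_P$ (with $W_P:=\sum_{p\in P,\,p\leq y}1/p$) combined with Cauchy--Schwarz yields $T(x,y)=W_P S(x)+O(x\sqrt{W_P})$, where $S(x):=\sum_{n\leq x}\1_E(n)a(n)$. On the other hand, up to an $O(x)$ contribution from pairs with $p\mid n$, the pseudo-multiplicativity rewrites $T(x,y)=\sum_{n\leq x}g(n)A_n$, where $A_n:=\sum_{p\in P,\,p\leq\min(y,x/n)}a(pn)$. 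Expanding $\sum_n|A_n|^2$ yields a diagonal of size $O(xW_P)$ and an off-diagonal consisting of $|P\cap[1,y]|^2$ correlation sums, each $\oh(x)$ by hypothesis (uniformly over the finitely many pairs since $y$ is fixed); a second Cauchy--Schwarz then produces $|\sum_n g(n) A_n|\leq O(x\sqrt{W_P})+\oh_{x\to\infty}(x|P\cap[1,y]|)$. Matching the two evaluations gives $\limsup_{x\to\infty}|S(x)|/x\ll 1/\sqrt{W_P}$; sending $y\to\infty$ so that $W_P\to\infty$ completes the proof.

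The principal obstacle I anticipate is the double-limit management in the $\Dthree{\infty}$ case: the hypothesis $\sum_{n\leq x}a(pn)\overline{a(qn)}=\oh(x)$ carries no uniformity in $p,q$, so the off-diagonal estimate is only usable after fixing $y$ and letting $x\to\infty$ first. Every intermediate error term must therefore be sorted into those vanishing as $x\to\infty$ for fixed $y$ and those vanishing only as $y\to\infty$, with the Cauchy--Schwarz book-keeping arranged so that the final bound takes the form $\oh_x(1)+O(1/\sqrt{W_P})$. A subordinate technicality in the $\Dfour$ case is the careful handling of $\{n:f(n)=0\}$ when $0\in K$; here the assumption $\|f\|_1\neq 0$ is essential to bound the density of this set so that the $L^1$-approximation of $\1_E$ by continuous composites $G\circ f$ goes through.
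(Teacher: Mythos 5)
Your proposal is correct, and the two halves relate to the paper's argument in different ways. For $E\in\Dthree{\infty}$ you are essentially reproducing the paper's proof in a special case: the paper isolates the pseudo-multiplicativity $\1_{E}(pn)=g(n)$ into \cref{prop:finding-K_1-and-K_2} (where, for $\Dthree{\infty}$, one indeed takes $E_1=E_2=\{n:\vec f(n)\cdot\vec z\in K\}$) and then feeds it into the general Tur\'an--Kubilius/Cauchy--Schwarz machine of \cref{prop:pKOc}; your double evaluation of $T(x,y)$ is exactly that machine unwound by hand, with the same $\oh_x(x)+\Oh(x/\sqrt{W_P})$ bookkeeping (cf.\ \cref{lem:lB6}). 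For $E\in\Dfour$ your route is genuinely different. The paper does \emph{not} reduce to \cref{thm:org-Katai}; instead it proves \cref{lem:finding-K_1-and-K_2-for-D_4}, which produces a divergent set of primes $P$ and sets $K_1\subset K\subset K_2$ with $f(p)K_1\subset K\subset f(p)K_2$ --- this requires \cref{lem:harmonic-subset-of-primes} (the values $f(p)$ cluster near some unimodular $u$ along a divergent set of primes, which is where $\|f\|_1\neq0$ enters) --- and then invokes the sandwich version of \cref{prop:pKOc} with $\|\t_1-\t_2\|_1\leq\epsilon$. You instead sandwich $\1_{K\setminus\{0\}}\circ f$ between two continuous compositions $H\circ f\leq\1_E\leq G\circ f$ (a Urysohn construction from the neighbourhood $\{F>\tfrac12\}$ of $\partial K$ supplied by the definition of $\cn(f)$, which works since $0\notin\overline{K\setminus\{0\}}$ for $K\in\ca(f)$), approximate $G$ and $H$ uniformly by polynomials $\sum c_{jk}z^j\bar z^k$ on the unit disk, and apply \cref{thm:org-Katai} to each bounded multiplicative function $f^j\bar f^k$; the set $\{n:f(n)=0\}$ is the complement of a multiplicative set, so it too is killed by \cref{thm:org-Katai} rather than by a density bound (your appeal to $\|f\|_1\neq0$ for this point is not quite the right reason, but the step is harmless). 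Your version buys a shorter, more elementary proof of the $\Dfour$ case that apparently never uses $\|f\|_1\neq 0$, so it in fact covers a slightly larger class than $\Dfour$; what it does not give is the quantitative sandwich statement \cref{prop:pKOc}, which the paper flags as being of independent interest and which is the reusable tool behind both cases.
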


For the proof of \cref{thm:Katai-for-sets-2} we will need the following proposition.

\begin{Proposition}
\label{prop:finding-K_1-and-K_2}
Let $E\subset\N$ be a set that belongs to either $\Dthree{\infty}$ or $\Dfour$ and suppose $\overline{d}(E)>0$.
Then for all $\epsilon>0$ there exist sets $E_1\subset E_2\subset \C$ and a subset of prime numbers $P\subset \P$ satisfying:
\begin{enumerate}
[label=\text{(\roman*)}, ref=\text{(\roman*)}, leftmargin=*]
\item
$\overline{d}(E_2\setminus E_1)\leq \epsilon$;
\item
$\sum_{p\in P}\tfrac{1}{p}=\infty$;
\item
for all $p\in P$ and $n\in\N$ with $\gcd(n,p)=1$ we have $\1_{E_1}(n)\leq \1_{E}(n p)\leq \1_{E_2}(n)$.
\end{enumerate}  
\end{Proposition}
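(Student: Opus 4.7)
The plan is to handle the two cases -- $E\in\Dthree{\infty}$ and $E\in\Dfour$ -- separately, with the first being a direct consequence of the concentration-point hypothesis and the second requiring an approximation argument.

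For $E\in\Dthree{\infty}$, write $E=E(\vec f,K)$ where $\vec f=(f_1,\dots,f_r)\colon\N\to\C^r$ is multiplicative with a concentration point $\vec z\in\C^r$. I would set $P:=\{p\in\P:\vec f(p)=\vec z\}$, which satisfies $\sum_{p\in P}1/p=\infty$ by definition, and observe that for every $p\in P$ and every $n$ coprime to $p$, componentwise multiplicativity gives $\vec f(np)=\vec f(n)\cdot\vec z$. Hence $np\in E$ iff $\vec f(n)\in K':=\{\vec v\in\C^r:\vec v\cdot\vec z\in K\}$, so choosing $E_1:=E_2:=E(\vec f,K')$ turns (iii) into an equality and makes (i) and (ii) immediate.

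For $E\in\Dfour$, write $E=E(f,K)$ with $f\in\cm$, $\|f\|_1\neq 0$ and $K\in\ca(f)$. First I would extract an ``effective concentration point'' on the unit circle: \cref{lem:lH2-C} yields $\sum_p\tfrac{1}{p}(1-|f(p)|)<\infty$, so for each $\eta>0$ the primes with $|f(p)|\geq 1-\eta$ still contribute $\sum 1/p=\infty$; a covering argument on the compact annulus $\{z:1-\eta\leq|z|\leq 1\}$ combined with a diagonal argument as $\eta\to 0$ then produces $z_0$ on the unit circle such that $P(\delta):=\{p:|f(p)-z_0|<\delta\}$ satisfies $\sum_{p\in P(\delta)}1/p=\infty$ for every $\delta>0$. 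Fixing a small $\delta>0$ (to be chosen in terms of $\epsilon$), I would set $P:=P(\delta)$ and take the ``multiplicative inner/outer approximations''
\begin{equation*}
K_1:=\{w\in\C:w\cdot B(z_0,\delta)\subset K\},\qquad K_2:=\{w\in\C:w\cdot B(z_0,\delta)\cap K\neq\emptyset\},
\end{equation*}
together with $E_i:=\{n:f(n)\in K_i\}$ for $i=1,2$. Then (ii) holds by construction, and (iii) is a one-line check: for $p\in P$ coprime to $n$, $f(p)\in B(z_0,\delta)$, so $n\in E_1$ forces $f(n)f(p)\in K$ and hence $np\in E$, while $np\in E$ puts a point of $f(n)B(z_0,\delta)\cap K$ in evidence, forcing $n\in E_2$.

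The main obstacle is verifying (i). Connectedness of $B(z_0,\delta)$ shows that every $w\in K_2\setminus K_1$ has $wB(z_0,\delta)$ meeting $\partial K$, which (using $|w|\leq 1$) gives $d(wz_0,\partial K)\leq\delta$; hence $E_2\setminus E_1\subset\{n:d(f(n)z_0,\partial K)\leq\delta\}$. The $\cn(f)$-hypothesis supplies a continuous $F\colon\C\to[0,1]$ with $F|_{\partial K}=1$ and $\limsup_N\tfrac{1}{N}\sum_{n\leq N}F(f(n))\leq\epsilon/2$, but this controls averages of $F\circ f$, not of $F(f(\cdot)\,z_0)$, so the bound is not immediate. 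Bridging this gap is the crux of the proof. My plan is to exploit the abundance of primes in $P$: a Tur\'an--Kubilius-style estimate shows that almost every $n$ has a prime factor $p'\in P$, and writing $n=p'm$ with $p'\nmid m$ gives $f(n)=f(p')f(m)\approx z_0\,f(m)$, so $f(n)z_0\approx z_0^2f(m)$. Iterating this peeling $k$ times, with $k$ chosen so that $z_0^{k+1}$ lies within $\delta$ of $1$ (possible since $(z_0^j)_j$ is either eventually $1$ or dense in $S^1$ when $|z_0|=1$), reduces the problem to controlling, for some absolute constant $C$, the density of $\{m:d(f(m),\partial K)\leq Ck\delta\}$, which is bounded by the $\cn(f)$-estimate on $\partial K$ once $\delta$ is chosen small enough relative to $k$ and $\epsilon$.
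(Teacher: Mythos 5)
Your treatment of the case $E\in\Dthree{\infty}$ is exactly the paper's: take $P$ to be the fiber of the concentration point and $E_1=E_2=\{n:\vec f(n)\cdot\vec z\in K\}$, so that (iii) is an equality. For $E\in\Dfour$ your setup also tracks the paper's closely: the existence of $z_0$ is \cref{lem:harmonic-subset-of-primes}, and your inner/outer sets $K_1,K_2$ are a legitimate variant of the sets $J_1=(\overline u J)\setminus S$, $J_2=(\overline u J)\cup S$ constructed in \cref{lem:finding-K_1-and-K_2}; your verification of (ii) and (iii) is fine.

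The gap is in the step you yourself flag as the crux, and the iterated ``peeling'' does not close it. First, there is a circularity: when $\arg z_0$ is irrational, the exponent $k$ with $|z_0^{k+1}-1|<\delta$ depends on $\delta$, and the accumulated error of the $k$-fold peeling is of order $k\delta$. For badly approximable $z_0$ one has $k(\delta)\,\delta$ bounded away from $0$ as $\delta\to0$, whereas the radius $\eta$ of the neighbourhood of $\partial K$ on which the $\cn(f)$-test function $F$ exceeds $\tfrac12$ is dictated by $F$ and may be arbitrarily small; so ``$\delta$ small enough relative to $k$ and $\epsilon$'' cannot in general be arranged. Second, the peeling correspondence $n\mapsto m_k=n/(p_1'\cdots p_k')$ is far from injective (a given $m$ has very many preimages below $N$), so the upper density of the bad set of $n$'s does not reduce, with an absolute constant, to the upper density of $\{m: d(f(m),\partial K)\leq Ck\delta\}$; making this quantitative is precisely a Tur\'an--Kubilius-type argument that you have not carried out.

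The paper avoids iteration altogether. It pre-composes the test function with the rotation, setting $G(z):=F(uz)$, so that $G=1$ on $\partial(\overline u K)$ and the collar is simply $S=\{G>\tfrac12\}$; the quantity to be bounded is then $\limsup_N\frac1N\sum_{n\le N}G(f(n))=\limsup_N\frac1N\sum_{n\le N}F(uf(n))$, which is compared to $\frac1N\sum_{n\le N}F(f(p)f(n))=\frac1N\sum_{n\le N}F(f(pn))$ for a \emph{single} prime $p\in P_{u,\delta}$ with $1/p$ small, using uniform continuity of $F$ once. This removes the Diophantine dependence on $z_0$ entirely. If you want to salvage your architecture, replace the $k$-fold peeling by this one-step transfer (note that $d(f(n)z_0,\partial K)<\delta$ and $|f(p)-z_0|<\delta$ already give $d(f(pn),\partial K)<2\delta$); the one remaining point that requires care is relating $\frac1N\sum_{n\le N}F(f(pn))$ back to $\frac1M\sum_{m\le M}F(f(m))$, since reindexing $m=pn$ changes the range of summation by a factor of $p$.
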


A proof of \cref{prop:finding-K_1-and-K_2} can be found in Subsection \ref{sec:finding-K_1-and-K_2}.

Another key ingredient for proving \cref{thm:Katai-multipliactive-fibers-2} is the following generalization of the K{\'a}tai Orthogonality Criterion (\cref{thm:org-Katai}), which we believe is of independent interest.

\begin{Proposition}\label{prop:pKOc}
Let $P_y$ be a subset of $\P$ with
$
p\leq y
$
for all
$
p\in P_y
$
and
\begin{equation}
\label{eqn:pKOc-1}
\sum_{p\in P_y} \frac{1}{p}
~~\xrightarrow{y\rightarrow\infty}~\infty.
\end{equation}
If $\s$, $\t_1$, $\t_2$ and $\rfctn$ are bounded real-valued arithmetic functions
such that for all $n\in \N$ and $p\in\bigcup_y P_y$ with $\gcd(n,p)=1$ one has
\begin{equation}
\label{eqn:pKOc-2}
\t_1(n)\rfctn(p)\leq\s(np)\leq\t_2(n)\rfctn(p)
\end{equation}
and if
$(u_n)$ is a bounded sequence in a Hilbert space $\Hilb$ satisfying
\begin{equation}
\label{eqn:pKOc-3}
\sum_{n\leq x} \la u_{pn}, u_{q n}\ra = \oh(x)
\end{equation}
for all $p,q\in \bigcup_y P_y$ with $p\neq q$ then
\begin{equation}
\label{eqn:pKOc-4}
\left\|\sum_{n\leq x} \s(n) u_n\right\| =\oh(x)+\Oh(x\|\t_1-\t_2\|_1).
\end{equation}
\end{Proposition}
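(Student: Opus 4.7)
I would adapt the Kátai--Bourgain--Sarnak--Ziegler orthogonality argument underlying \cref{thm:org-Katai} to the sandwiched setting. After interchanging $\t_1$ and $\t_2$ if necessary, we may assume $\t_1(n) \leq \t_2(n)$ for every $n$. Write $W(x) := \sum_{n \leq x} \s(n) u_n \in \Hilb$; the goal is to show $\|W(x)\| = \oh(x) + \Oh(x \|\t_1 - \t_2\|_1)$.

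The first step is a Turán--Kubilius reduction. Let $B_y := \sum_{p \in P_y} 1/p$ and $\omega_{P_y}(n) := \#\{p \in P_y \colon p \mid n\}$; the classical Turán--Kubilius bound $\sum_{n \leq x}(\omega_{P_y}(n) - B_y)^2 = \Oh(x B_y)$ together with the triangle inequality in $\Hilb$ and Cauchy--Schwarz gives
\[
\Big\| B_y W(x) - \sum_{p \in P_y} \sum_{\substack{m \leq x/p \\ \gcd(m,p) = 1}} \s(pm) u_{pm} \Big\| = \Oh\!\big(x\sqrt{B_y}\big),
\]
where the contribution from pairs with $p^2 \mid pm$ is peeled off as a further $\Oh(x \sum_p 1/p^2) = \Oh(x)$ error. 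Next, the sandwich hypothesis \eqref{eqn:pKOc-2} yields $|\s(pm) - \t_1(m)\rfctn(p)| \leq |\t_2(m) - \t_1(m)|\,|\rfctn(p)|$ for $\gcd(m,p) = 1$, so replacing $\s(pm)$ by $\t_1(m)\rfctn(p)$ incurs a total norm error of at most
\[
C \sum_{p \in P_y} |\rfctn(p)| \sum_{m \leq x/p} |\t_2(m) - \t_1(m)| \leq \Oh\!\big(x B_y \|\t_1 - \t_2\|_1\big) + \oh_y(x),
\]
which, after division by $B_y$, produces exactly the $\Oh(x\|\t_1 - \t_2\|_1)$ term in the conclusion.

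It therefore remains to show that $\|\cf_y(x)\| = \oh(x B_y)$ once $y$ is large enough after $x \to \infty$, where
\[
\cf_y(x) := \sum_{p \in P_y} \rfctn(p) \sum_{\substack{m \leq x/p \\ \gcd(m,p) = 1}} \t_1(m)\, u_{pm}.
\]
To this end I would expand $\|\cf_y(x)\|^2 = \sum_{p,q \in P_y} \rfctn(p)\rfctn(q)\,\la V_p, V_q\ra$ with $V_p := \sum_{m, \gcd(m,p)=1} \t_1(m) u_{pm}$. The diagonal $p = q$ contribution is $\leq C' \sum_p \|V_p\|^2 \leq C''\,x^2 \sum_p 1/p^2 = \Oh(x^2)$, which is $\oh((x B_y)^2)$ as $y \to \infty$. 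For the off-diagonal $p \neq q$, one opens $\la V_p, V_q\ra$ as $\sum_{m,n} \t_1(m)\t_1(n)\la u_{pm}, u_{qn}\ra$, re-indexes by $j = pm$ and $k = qn$, isolates the coincident pairs with $pm = qn$ (which parametrize as $m = q n'$, $n = p n'$ with $\gcd(n', pq) = 1$ and contribute at most $\Oh(x B_y^2)$ after summing over $p \neq q$), and reduces the remaining terms to the single-index correlations controlled by \eqref{eqn:pKOc-3}. Since $|P_y|$ is finite for each fixed $y$, the accumulated off-diagonal contribution is $\oh_y(x^2 B_y^2)$. Combining all the pieces and letting $x \to \infty$ then $y \to \infty$ gives the claimed bound \eqref{eqn:pKOc-4}.

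The principal obstacle is the off-diagonal step: the hypothesis \eqref{eqn:pKOc-3} controls only the correlations $\sum_n \la u_{pn}, u_{qn}\ra$ with a common index $n$, whereas $\la V_p, V_q\ra$ inherently involves a double sum over independent indices $m$ and $n$ weighted by the arbitrary bounded function $\t_1$. Decoupling this double sum and bringing the single-index hypothesis to bear is the technical crux of the argument; it relies on the specific bilinear structure provided by the sandwich \eqref{eqn:pKOc-2} together with a careful change of variables separating coincident and non-coincident pairs.
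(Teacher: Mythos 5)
Your first two reductions (the Tur\'an--Kubilius step and the replacement of $F(pm)$ by $G_1(m)H(p)$ at the cost of $\Oh(x\|G_1-G_2\|_1)$) are sound and agree with the paper's argument. The gap is in the final step, and it is precisely the point you flag as ``the technical crux'' without actually resolving it. Expanding
$$\|\mathcal{F}_y(x)\|^2=\sum_{p,q\in P_y}H(p)H(q)\la V_p,V_q\ra,\qquad V_p=\sum_{m}G_1(m)u_{pm},$$
produces, for $p\neq q$, the double sum $\sum_{m,n}G_1(m)G_1(n)\la u_{pm},u_{qn}\ra$ over \emph{independent} indices $m,n$. Hypothesis \eqref{eqn:pKOc-3} says nothing about such terms: it only controls correlations along a \emph{common} multiplier, $\sum_{n}\la u_{pn},u_{qn}\ra$. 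Your proposed re-indexing by $j=pm$, $k=qn$ does not help: the coincident pairs $pm=qn$ give diagonal terms $\|u_{pm}\|^2$ (harmless, but they never invoke the hypothesis), while the non-coincident pairs are generic inner products $\la u_j,u_k\ra$ with $j\neq k$ and arbitrary bounded weights, for which no cancellation mechanism is available. In general $|\la V_p,V_q\ra|$ can be as large as $\|V_p\|\,\|V_q\|\asymp x^2/(pq)$, so the off-diagonal contribution can genuinely be of order $x^2B_y^2$ rather than $\oh(x^2B_y^2)$, and the argument does not close.

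The fix --- and this is what the paper does, following K\'atai and Bourgain--Sarnak--Ziegler --- is to apply Cauchy--Schwarz \emph{before} expanding the square, and to apply it in the variable that will be shared. Write
$$\mathcal{F}_y(x)=\sum_{n}G_1(n)\Big(\sum_{p\in P_y}\1_{np\leq x}H(p)u_{np}\Big)$$
and bound
$$\|\mathcal{F}_y(x)\|\leq\Big(\sum_{n}|G_1(n)|^2\Big)^{1/2}\Big(\sum_{n}\Big\|\sum_{p\in P_y}\1_{np\leq x}H(p)u_{np}\Big\|^2\Big)^{1/2}.$$
Expanding the inner square now yields $\sum_{n}\sum_{p,q}H(p)H(q)\1_{np\leq x}\1_{nq\leq x}\la u_{np},u_{nq}\ra$ with the \emph{same} $n$ in both slots, so the off-diagonal terms $p\neq q$ are exactly those controlled by \eqref{eqn:pKOc-3}, while the diagonal contributes $\Oh(\sum_{p}x/p)=\Oh(xB_y)$, which after taking the square root and multiplying by the first factor $\Oh(x^{1/2})$ gives $\Oh(xB_y^{1/2})$ --- acceptable after dividing by the Tur\'an--Kubilius normalization $B_y$. (The paper additionally splits $P_y$ into dyadic blocks $P_{k,y}=P_y\cap[2^k,2^{k+1})$ before this Cauchy--Schwarz so that the summation ranges $m\leq x/p$ are comparable within each block; you will need this, or some substitute, to keep the bookkeeping of the indicators and the normalization by $|P_{k,y}|$ under control.) Without this reordering of the Cauchy--Schwarz step your proof does not go through.
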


%In the formulation of propostion the role of the multiplicative function is replaced by any function $F$ with the property that the variables $n$ and $p$ in the expression $F(n p)$ can be separated, meaning that for many numbers $n$ and many primes $p$ the expression $F(np)$ equals $G(n)H(p)$ for some suitable functions $G$ and $H$.
A proof of \cref{prop:pKOc} is given in Subsection \ref{appendix:KOC}.
% can be obtained by taking a proof of the classical K{\'a}tai Orthogonality criterion (such as in \cite{Katai86, BSZ13, FH17}) and keeping close track of how the properties of the multiplicative function are used. This is carried out thoroughly in Subsection \ref{appendix:KOC}.
%Let us now state \cref{prop:pKOc}.

At this point we have collected all the tools needed to provide a proof of
\cref{thm:Katai-multipliactive-fibers-2}.

\begin{proof}[Proof of \cref{thm:Katai-multipliactive-fibers-2}]
Let $a(n)$ be a bounded sequence of complex numbers satisfying \eqref{eq:KOC-a}.
Let $E\subset\N$ be a set that belongs to either $\Dthree{\infty}$ or $\Dfour$. If $\overline{d}(E)=0$ then  \eqref{eq:kc-conclusion-sets-a} is trivially satisfied. Hence we can assume without loss of generality that $\overline{d}(E)>0$. Let $\epsilon>0$ be arbitrary.
According to \cref{prop:finding-K_1-and-K_2} there exist sets $E_1\subset E_2\subset \C$ and a set of prime numbers $P\subset \P$ satisfying $\overline{d}(E_2\setminus E_1)\leq \epsilon$, $\sum_{p\in P}\tfrac{1}{p}=\infty$, and $\1_{E_1}(n)\leq \1_{E}(n p)\leq \1_{E_2}(n)$ for all $p\in P$ and $n\in\N$ with $\gcd(n,p)=1$.

Now take $P_y:=P\cap[1,y]$, $F:=\1_E$, $G_1=\1_{E_1}$, $G_2=\1_{E_2}$, $H=1$ and $u_n=a(n)$. It follows immediately from $\overline{d}(E_2\setminus E_1)\leq \epsilon$ that $\|G_1-G_2\|_1\leq \vep$. Also, if $p\in P$ and $\gcd(n,p)=1$, then
$G_1(n) H(p)\leq F(np)\leq G_2(n) H(p)$. This means we can apply \cref{prop:pKOc} to obtain
\begin{equation}
\label{eqn:tKO-jordan-2}
\left|\sum_{n\leq x} F(n) u_n\right|=\left|\sum_{n\leq x} \1_{E}(n) a(n)\right| =\oh(x)+\Oh(x\vep).
\end{equation}
Since $\vep>0$ was chosen arbitrarily, this proves the theorem.
\end{proof}

We end this subsection with formulating an open question.

\begin{Question}
Consider the class $\DtwoC$ of all sets of the form $E(f,K):=\{n\in\N:f(n)\in K\}$, where $f\in\cm$ with $\|f\|_1> 0$ and $K$ is a Jordan measurable subset of $\C$. Observe that $\DtwoB\subset \DtwoC$.
Can \cref{thm:Katai-for-sets} be extended to the class $\DtwoC$?
\end{Question}

% =======================================================SUBSECTION
\subsection{Proof of \cref{prop:e-sets-1}}
\label{sec:proof-of-e-sets-1}
% ================================================================

Before embarking on the proof of \cref{prop:e-sets-1} we need to define and discuss the notion concentrated multiplicative functions (which was introduced by Rusza in \cite{Ruzsa77}).

\begin{Definition}[cf.\ {\cite[Definition 3.8 and 3.9]{Ruzsa77}}]\label{def:concentration-pt-old}
\label{def:ST-concentrated-functions}
A multiplicative function $f\colon \N\to\C\setminus\{0\}$ is called \define{concentrated} if it satisfies
\begin{enumerate}
[label=\text{(\roman*)}, ref=\text{(\roman*)}, leftmargin=*]
\item\label{itm_a}
$f$ has at least one concentration point;
\item\label{itm_b}
the subgroup of $(\C\setminus\{0\},\cdot)$ generated by all concentration points of $f$, which we denote by $\G$, is finite; and
\item\label{itm_c}
$
\sum_{\substack{p\in\P,\\ f(p)\notin\G}}\frac{1}{p}<\infty.
$
\end{enumerate}
\end{Definition}

\begin{Theorem}[special case of {\cite[Theorem 3.10]{Ruzsa77}}]\label{thm:ST-Ruzsa-3.10}
Let $f:\N\to \C\setminus\{0\}$ be a multiplicative function. If $f$ is not concentrated then for all $z\in \C\setminus\{0\}$ the level set $E(f,z)$ has zero density.
\end{Theorem}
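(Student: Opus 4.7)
The plan is to prove the contrapositive: assume $\overline{d}(E(f,z_0))>0$ for some $z_0\in\C\setminus\{0\}$, and deduce that $f$ is concentrated, i.e.\ satisfies conditions \ref{itm_a}, \ref{itm_b}, and \ref{itm_c} of \cref{def:ST-concentrated-functions}. The guiding principle throughout is the multiplicative identity $f(np)=f(p)f(n)$ for $\gcd(p,n)=1$, which rigidly couples the level sets $E(f,z)$ and $E(f,zf(p))$ via the bijection $n\mapsto pn$ on coprime residues.

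\textbf{Step 1: Existence of a concentration point.} First I would show that $\overline{d}(E(f,z_0))>0$ forces at least one concentration point. Heuristically, if no $w\in\C\setminus\{0\}$ satisfied $\sum_{p:\,f(p)=w}1/p=\infty$, then the values $f(p)$ over primes would be too dispersed, and a Tur\'an--Kubilius-type second-moment estimate, roughly
\[
\sum_{p\leq y}\frac{1}{p}\sum_{n\leq x,\ \gcd(n,p)=1}\bigl(\1_{E(f,z_0)}(pn)-\1_{E(f,z_0 f(p))}(n)\bigr)^2\ \ll\ x\sum_{p\leq y}\frac{1}{p},
\]
would clash with the positive density of $E(f,z_0)$. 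This dispersion versus density conflict forces the primes to collapse onto (at least) one value $w$ with $\sum_{p:\,f(p)=w}1/p=\infty$, producing the desired concentration point.

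\textbf{Step 2: Finiteness of $\G$.} For a typical $n\leq x$, the number of prime factors of $n$ whose $f$-value equals a given concentration point $z_i$ is approximately Poisson-distributed with mean $\mu_i(x)\to\infty$ (Erd\H os--Kac / Tur\'an--Kubilius). Membership $n\in E(f,z_0)$ imposes the rigid algebraic constraint $\prod_{p\mid n} f(p)^{v_p(n)}=z_0$ on the multiset of prime-values. If the concentration points generated an \emph{infinite} subgroup of $(\C\setminus\{0\},\cdot)$, then the typical tuple of exponents $(k_1,k_2,\ldots)$ would miss this constraint, forcing $\overline{d}(E(f,z_0))=0$ and contradicting our hypothesis. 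Hence $\G$ must be finite.

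\textbf{Step 3: Convergence of the tail.} Having established \ref{itm_a} and \ref{itm_b}, I would deduce \ref{itm_c} via the same circle of Erd\H os--Kac ideas: if $\sum_{p:\,f(p)\notin\G}1/p=\infty$, then for $n$ in a set of density $1$ the product $\prod_{p\mid n}f(p)^{v_p(n)}$ would pick up infinitely many factors outside $\G$, so $f(n)\notin\G$ for almost every $n$. This is incompatible with $\overline{d}(E(f,z_0))>0$ when $z_0\in\G$; and when $z_0\notin\G$, the multiplicative identity reduces the situation to the previous case by multiplying $n$ with suitable primes. A character expansion on the finite group $\G$, together with \cref{thm:FH-thm.2.9} (specifically case \ref{item:E79-thm6.3-iv} applied to a multiplicative function built from $f$ and a character of $\G$), makes this contradiction quantitative.

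\textbf{Main obstacle.} The most delicate point is Step~2: showing that the group generated by the concentration points is \emph{finite} rather than merely finitely generated or countable. The Erd\H os--Kac heuristic must be made uniform over all tuples of concentration points simultaneously, and the rigidity of the algebraic constraint $\prod f(p)^{v_p(n)}=z_0$ must be quantified against the Gaussian fluctuations of the exponent vectors. This is the technical core of Ruzsa's original argument in \cite[\S3]{Ruzsa77}.
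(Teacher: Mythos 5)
The paper offers no proof of \cref{thm:ST-Ruzsa-3.10} at all: it is imported as a special case of Ruzsa's Theorem 3.10 in \cite{Ruzsa77} and used as a black box, so there is no internal argument to compare yours against. Judged on its own terms, your proposal is a roadmap rather than a proof, and each of the three conditions of \cref{def:ST-concentrated-functions} is obtained only heuristically, with the one you yourself identify as the technical core (Step 2, finiteness of $\G$) explicitly deferred back to Ruzsa. That alone means the attempt does not constitute a proof.

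Beyond the admitted incompleteness there are two concrete defects. In Step 1 the coupling is written with the wrong factor and, once corrected, your displayed estimate is vacuous: for $\gcd(p,n)=1$ one has $f(pn)=f(p)f(n)$, hence $\1_{E(f,z_0)}(pn)=\1_{E(f,z_0 f(p)^{-1})}(n)$ \emph{exactly} (note $f(p)^{-1}$, not $f(p)$), so the second-moment sum you display equals $0$ for the correct target set and cannot produce any dispersion-versus-density contradiction; extracting a concentration point from $\overline{d}(E(f,z_0))>0$ is precisely the nontrivial content of Ruzsa's argument and is not achieved here. In Step 3 the inference ``$n$ picks up infinitely many prime factors with $f(p)\notin\G$, so $f(n)\notin\G$ for almost every $n$'' is a non sequitur: a product of elements lying outside a finite group can perfectly well lie inside it (with $\G=\{1,-1\}$, the values $\pm i\notin\G$ yet $(\pm i)^2\in\G$), so the character expansion you mention at the end is not a ``quantitative'' refinement of a correct qualitative claim but the entire argument, and it is not carried out. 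Since the paper itself treats this theorem as an external citation, the appropriate course is either to cite \cite{Ruzsa77} as the authors do or to reproduce Ruzsa's proof in full; the present sketch does neither.
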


\begin{Corollary}[see {\cite[Corollary 2.17]{BKLR17arXiv}}]
\label{cor:density->concentrated function}
Let $f\colon \N\to\C$ be a multiplicative function and $z\in\C\setminus\{0\}$. If $d(E(f,z))>0$ then there exists a concentrated multiplicative function $g\colon \N\to\C\setminus\{0\}$ such that
$$
E(f,z)=E(g,z).
$$
\end{Corollary}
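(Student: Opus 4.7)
My plan is to construct a multiplicative function $g \colon \N \to \C \setminus \{0\}$ satisfying $E(g,z) = E(f,z)$; concentration of $g$ will then follow immediately from the contrapositive of \cref{thm:ST-Ruzsa-3.10}, since $d(E(g,z)) = d(E(f,z)) > 0$ and $z \neq 0$. All the real work lies in the construction.

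I begin by observing that $z \neq 0$ forces $E(f,z) \subseteq S$, where $S := \{n \in \N : f(n) \neq 0\}$. For each prime $p$, let $K_p := \{k \geq 0 : f(p^k) \neq 0\}$ (noting $0 \in K_p$). I will then choose a single constant $c \in \C \setminus \{0\}$ and define
\[
g(p^k) := \begin{cases} f(p^k), & \text{if } k \in K_p, \\ c, & \text{if } k \notin K_p, \end{cases}
\]
extended to all of $\N$ by multiplicativity. By construction $g(n) = f(n)$ for every $n \in S$. For $n \notin S$, consider the coprime decomposition $n = \tilde n \cdot m$ with $\tilde n := \prod_{p \,:\, v_p(n) \in K_p} p^{v_p(n)}$ and $m := \prod_{p \,:\, v_p(n) \notin K_p} p^{v_p(n)}$; then $g(n) = f(\tilde n) \cdot c^{\omega(m)}$, where $f(\tilde n) \neq 0$ and $\omega(m) \geq 1$.

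The crux is the choice of $c$: I need $c^t \neq z/\alpha$ for every $t \geq 1$ and every $\alpha \in f(\N) \setminus \{0\}$, so that $g(n) \neq z$ for every $n \notin S$. Since $f(\N)$ is countable and each equation $c^t = z/\alpha$ has at most $t$ solutions in $c$, the set of inadmissible $c$ is a countable subset of $\C \setminus \{0\}$, and any $c$ in its co-countable complement works. With such a $c$ fixed, the equality $E(g,z) = E(f,z)$ is immediate: on $S$ the two indicator conditions agree because $g$ and $f$ coincide there, and on $S^c$ neither set contains any element.

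The only conceptually subtle step is the choice of $c$, handled cleanly by a countability argument; notably, no quantitative information on $|f|$ enters, so the construction works for arbitrary multiplicative $f$ and not merely those lying in $\cm$. Once $g$ is in hand, \cref{thm:ST-Ruzsa-3.10} applied to $g$ closes the proof: since $g$ is nowhere zero and $E(g,z)$ has positive density, $g$ must be concentrated.
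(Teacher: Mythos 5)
Your argument is correct. The paper itself gives no proof of this corollary (it is quoted from \cite{BKLR17arXiv}), but your construction is exactly the natural route: redefine $f$ at the prime powers where it vanishes, choosing the replacement value $c$ so that no product containing a factor of $c$ can equal $z$ (your countability argument for admissible $c$ is clean and complete), and then invoke the contrapositive of \cref{thm:ST-Ruzsa-3.10}. All the details check out: $g$ is nowhere zero, agrees with $f$ on $\{n:f(n)\neq 0\}\supseteq E(f,z)$, misses $z$ off that set, and hence must be concentrated since $E(g,z)=E(f,z)$ has positive density.
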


%\begin{proof}
%Define $J:=im(f)\setminus\{0\}$. Since $J$ is a countable subset of $\C\setminus\{0\}$, there exists $y\in \C\setminus\{0\}$ such that $(y^n\cdot J)\cap J=\emptyset$ for all $n\in\N$. We define a new multiplicative function $g$ as
%$$
%g(p^k):=
%\begin{cases}
%f(p^k),&\text{if }f(p^k)\neq 0
%\\
%y,&\text{if }f(p^k)=0
%\end{cases},
%\qquad\forall k\in\N,~\forall p\in\P.
%$$
%It follows from $(y^n\cdot J)\cap J=\emptyset$ that $E:=\{n\in\N: f(n)=z'\}=\{n\in\N: g(n)=z'\}$ for all $z'\in J$, so, in particular $\{n\in\N:f(n)=z\}=\{n\in\N:g(n)=z\}$. Since $g(n)\neq 0$ for all $n\in\N$, we can apply \cref{thm:ST-Ruzsa-3.10} and deduce that $g$ must satisfy conditions \ref{itm_a}, \ref{itm_b} and \ref{itm_c} of \cref{def:concentration-pt-old}.
%\end{proof}

Before giving the proof of \cref{prop:e-sets-1} we need the following elementary lemma.
\begin{Lemma}
\label{lem:harmonic-sets}
Let $f_1,\ldots,f_r\colon \N\to\C$ be multiplicative functions and suppose that for every $i\in\{1,\ldots,r\}$ there exists a set of primes $P_i\subset\P$ satisfying the following two properties:
\begin{enumerate}
[label=\text{(\roman*)}, ref=\text{(\roman*)}, leftmargin=*]
\item
$\sum_{p\in\P\setminus P_i}\frac{1}{p}<\infty$;
\item
the set $\{f_i(p):p\in P_i\}$ is finite.
\end{enumerate}
Then there exist $z_1,\ldots,z_r\in\C$ and a set $P\subset\P$ with $\sum_{p\in P}\frac{1}{p}=\infty$ such that $f_i(p)=z_i$ for all $p\in P$ and all $1\leq i\leq r$.
\end{Lemma}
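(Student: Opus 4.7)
The plan is straightforward and combinatorial; the work is in packaging the pigeonhole correctly.

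First I would intersect the sets of primes: set $Q := \bigcap_{i=1}^r P_i$. Since $\P \setminus Q = \bigcup_{i=1}^r (\P \setminus P_i)$, subadditivity and hypothesis (i) give
\[
\sum_{p \in \P \setminus Q} \frac{1}{p} \leq \sum_{i=1}^r \sum_{p \in \P \setminus P_i} \frac{1}{p} < \infty.
\]
Combining this with the classical divergence $\sum_{p \in \P} \frac{1}{p} = \infty$ yields $\sum_{p \in Q} \frac{1}{p} = \infty$.

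Next I would exploit hypothesis (ii) on $Q$. For each $i \in \{1, \ldots, r\}$ the set $S_i := \{f_i(p) : p \in P_i\}$ is finite, hence so is the product $S := S_1 \times \cdots \times S_r \subset \C^r$. Every $p \in Q$ belongs to each $P_i$, so the map $p \mapsto \vec{f}(p) := (f_1(p), \ldots, f_r(p))$ takes $Q$ into the finite set $S$. Partition
\[
Q = \bigsqcup_{\vec{z} \in S} Q_{\vec{z}}, \qquad Q_{\vec{z}} := \{p \in Q : \vec{f}(p) = \vec{z}\}.
\]

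Finally I would invoke a pigeonhole argument on divergent series: since
\[
\sum_{\vec{z} \in S} \sum_{p \in Q_{\vec{z}}} \frac{1}{p} = \sum_{p \in Q} \frac{1}{p} = \infty
\]
and $S$ is finite, there must exist some $\vec{z} = (z_1, \ldots, z_r) \in S$ for which $\sum_{p \in Q_{\vec{z}}} \frac{1}{p} = \infty$. Taking $P := Q_{\vec{z}}$ and the corresponding coordinates $z_1, \ldots, z_r$ furnishes the desired conclusion, since by construction $f_i(p) = z_i$ for every $p \in P$ and every $i$.

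There is no serious obstacle here; the only thing to be careful about is not to lose track of the fact that $Q \subset P_i$ for each $i$, which is what lets us apply the finiteness hypothesis (ii) on $Q$ rather than on $\P$. The divergence of $\sum_{p \in Q} 1/p$ and the finiteness of $S$ together make the pigeonhole step automatic.
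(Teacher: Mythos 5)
Your proof is correct and follows essentially the same route as the paper's: intersect the $P_i$, note the reciprocal sum over the intersection still diverges, partition by the finitely many possible value tuples, and apply the pigeonhole principle. You simply spell out the subadditivity step that the paper dismisses as "clear."
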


\begin{proof}
Let $P':=\bigcap_{i=1}^r P_i$. Then clearly $\sum_{p\in P}\frac{1}{p}=\infty$.
Moreover, $\{(f_1(p),\dots,f_r(p)):p\in P'\}$ is finite, so we get a finite partition of $P$ given by the possible $r$-tuples $(z_1,\dots,z_r)$ in the set $\{(f_1(p),\dots,f_r(p)):p\in P'\}$. By the pigeon hole principle, for at least one choice of $(z_1,\dots,z_r)$, the set $P=\{p\in P' : f_i(p)=z_i, 1\leq i\leq r\}$ satisfies $\sum_{p\in P}\frac{1}{p}=\infty$.
\end{proof}

\begin{proof}[Proof of \cref{prop:e-sets-1}]
Let  $E\in \Done{r}$ with $d(E)>0$ be given. By \cref{def:c1c2}, there exist multiplicative functions $f_1,\ldots,f_r\colon \N\to\C$ and complex numbers $z_1,\ldots,z_r$ such that $E=E(f_1,\ldots,f_r,z_1,\ldots,z_r)=\{n\in\N:f_1(n)=z_1,\ldots,~f_r(n)=z_r\}$.
Note that $E\subset E(f_i,z_i)=\{n\in\N:f_i(n)=z_i\}$, which implies that $d(E(f_i,z_i))>0$ for all $i\in\{1,\ldots,r\}$.

We now define new multiplicative functions $g_1,\ldots,g_r\colon \N\to\C$ in the following way:
For $i\in\{1,\ldots,r\}$, if $z_i=0$, set
$$
g_i(n):=
\begin{cases}
1,&\text{if $f_i(n)\neq 0$,}
\\
0,&\text{otherwise.}
\end{cases}
$$
On the other hand, if $z_i\neq0$, we take $g_i$ to be the concentrated multiplicative function guaranteed by \cref{cor:density->concentrated function}. Define $\vec{g}:=(g_1,\ldots,g_r)$, $\vec{z}:=(z_1,\ldots,z_r)$ and $K:=\{\vec{z}\}$.
%$K_1:=\{z_1\},\ldots,K_r:=\{z_r\}$. 
Observe that
$$
E=E(\vec{g},K).
$$
It thus suffices to show that $E(\vec{g},K)\in\Dthree{r}$.

Note that for every $i\in\{1,\ldots,r\}$ there exists a set of primes $P_i\subset\P$, satisfying $\sum_{p\in\P\setminus P_i}\frac{1}{p}<\infty$, such that $\{g_i(p):p\in P_i\}$ is finite. In light of \cref{lem:harmonic-sets} we can find $w_1,\ldots,w_r\in\C$ and a set of primes $P\subset\P$ with $\sum_{p\in P}\frac{1}{p}=\infty$ such that $g_i(p)=w_i$ for all $p\in P$ and all $1\leq i\leq r$. This proves that $\vec{g}$ has a concentration point and hence $E(\vec{g},K)$ belongs to $\Dthree{r}$.
\end{proof}

% =======================================================SUBSECTION
\subsection{Proof of \cref{prop:e-sets-2}}
\label{sec:proof-of-thmA}
% ================================================================

In this subsection we give a proof of \cref{prop:e-sets-2}.
First, we need the following useful lemma.

\begin{Lemma}
\label{lem:dilation-invariant-algebras-contained-in-a(f)}
Let $P\subset\P$ and assume $\sum_{p\in\P\setminus P}\tfrac{1}{p}<\infty$. Let $\ca$ be an algebra of subsets of $\C$ and suppose that for all $K\in\ca$ and all $u\in\C$ the set $uK$ belongs to $\ca$. Then for all $f,g\in\cm$ that satisfy $f(p)=g(p)$ for all $p\in P$ we have $\ca\subset \ca(f)$ if and only if $\ca\subset \ca(g)$.
\end{Lemma}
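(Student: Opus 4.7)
The plan is to argue by symmetry: assume $\ca\subset\ca(f)$ and deduce $\ca\subset\ca(g)$. Fix $K\in\ca$; the case $0\in K$ reduces to $K\setminus\{0\}$ by the very definition of $\ca(g)=\ca^*(g)\cup\{J\cup\{0\}:J\in\ca^*(g)\}$, so I may take $K\subset\C\setminus\{0\}$ and aim to show $\partial K\in\cn(g)$. The central device is the multiplicative decomposition $n=n_1\cdot d(n)$, with $\gcd(n_1,d(n))=1$, where $n_1$ is the squarefree product of those primes $p\in P$ with $v_p(n)=1$ and $d(n)$ collects the remaining prime-power factors (for $p\notin P$, or $p\in P$ with $v_p(n)\geq 2$). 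Since $n_1$ is squarefree with primes in $P$, where $f=g$, one has $f(n_1)=g(n_1)$; consequently $g(n)=g(d(n))\cdot f(n_1)$, and whenever $f(d(n))\neq 0$ this rearranges to $g(n)=c_{d(n)}\cdot f(n)$ with $c_d:=g(d)/f(d)$, so $g(n)$ is a dilate of $f(n)$ by a constant depending only on $d(n)$.

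An Euler-product computation using $\sum_{p\notin P}1/p<\infty$ and $\sum_p 1/p^2<\infty$ shows $\sum_{d\text{ valid}}1/d<\infty$; hence for every $\epsilon>0$ there exists $T$ such that the set $\{n:d(n)>T\}$ has upper density at most $\epsilon$. Fix such $\epsilon$ and $T$, let $D=\{d\leq T\text{ valid}:g(d)\neq 0\}$, and split $D=D^*\sqcup D^{**}$ according as $f(d)\neq 0$ or $f(d)=0$; set $\ell_d:=g(d)/f(d)$ on $D^*$ and $\ell_d:=g(d)$ on $D^{**}$. By dilation-invariance of $\ca$, for each $d\in D$ the set $\ell_d^{-1}K$ lies in $\ca\subset\ca(f)$, hence $\ell_d^{-1}\partial K\in\cn(f)$. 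From the definition of $\cn(f)$ I select, for each $d\in D$, a continuous $H_d\colon\C\to[0,1]$ equal to $1$ on $\ell_d^{-1}\partial K$ and with sufficiently small $f$-average; the open set $W_d:=\{H_d>1/2\}$ is then an open neighborhood of $\ell_d^{-1}\partial K$ on which $\{n:f(n)\in W_d\}$ has upper density at most $\epsilon/(2|D|)$. I next choose an open $V$ with $\partial K\subset V\subset\bigcap_{d\in D}\ell_d W_d$ (a finite intersection of open neighborhoods of $\partial K$), and via Urysohn's lemma produce a continuous $\tilde F\colon\C\to[0,1]$ with $\tilde F=1$ on $\partial K$ and support in $V$.

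It remains to bound $\frac{1}{N}\sum_{g(n)\neq 0}\tilde F(g(n))$ by splitting on $d(n)$. The part with $d(n)>T$ contributes $\leq\epsilon$. For $d(n)=d\in D^*$, the condition $g(n)\in V$ is equivalent to $f(n)\in\ell_d^{-1}V\subset W_d$, whose density is at most $\epsilon/(2|D|)$. The delicate case is $d(n)=d\in D^{**}$ where $f(d)=0$: here $g(n)=g(d)f(n_1)$, so $g(n)\in V$ becomes $f(n_1)\in g(d)^{-1}V\subset W_d$, and majorizing the sum over valid $n_1\leq N/d$ by the unrestricted $f$-sum over $m\leq N/d$ gives a contribution of at most $(1/d)\cdot\epsilon/(2|D|)$. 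Summing the finitely many terms yields the total bound $O(\epsilon)$, establishing $\partial K\in\cn(g)$. The main obstacle is this Subcase B, which arises because $f|_P=g|_P$ is a condition on prime values only and allows $f(p^k)\neq g(p^k)$ for $k\geq 2$; it is handled by passing from $n$ to $n_1$ and using the inclusion of the restricted multiplicative sum inside the full $f$-sum.
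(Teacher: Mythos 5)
Your proposal is correct and follows essentially the same route as the paper: your decomposition $n=n_1\cdot d(n)$ is exactly the paper's partition $\N=\bigcup_{t\in T_P}tS_P^{(t)}$, your tail bound $\sum_d 1/d<\infty$ matches the paper's $\sum_t d(S_P^{(t)})/t\leq 1$, and your finite intersection $V\subset\bigcap_{d}\ell_d W_d$ followed by Urysohn plays the role of the paper's $F(z)=\min_t F_t\big((f(t))^{-1}z\big)$. The only organizational difference is that the paper runs the argument symmetrically through the auxiliary function $f\cdot\1_{S_P}=g\cdot\1_{S_P}$, which lets it sum only over $t$ with $f(t)\neq 0$ and thereby sidesteps your separate subcase $f(d)=0\neq g(d)$.
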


\begin{proof}
It follows from the definition of $\ca(f)$ that the set $K$ belongs to $\ca(f)$ if and only if $K\setminus\{0\}$ belongs to $\ca(f)$ (we will use this fact implicitly later).

%If $f$ has at least one concentration point then so does $g$ and in this case $\ca(f)=\ca(g)=\cp(\C)$. Let us therefore assume that both $f$ and $g$ possess no concentration points.

Define the sets
\begin{equation}\label{eqn:S_P}
S_P:=\left\{n\in\N:\text{there exist distinct $p_1,\ldots, p_t\in P$ such that $n=p_1\cdot\ldots\cdot p_t$}\right\}
\end{equation}
and
\begin{equation}\label{eqn:T_P}
T_P:=\left\{n\in\N:\text{for all $p\in P$ if $p\mid n$ then $p^2\mid n$}\right\}.
\end{equation}
Note that the sets $S_P$ and $T_P$ are multiplicative, hence $\1_{S_P}$ and $\1_{T_P}$ are multiplicative functions (cf.\ \cref{footnote-p3}).
%It is straightforward to show that $\1_{S_P}$ and $\1_{T_P}$ are multiplicative functions and
Also, $f\cdot\1_{S_P}=g\cdot\1_{S_P}$.

Since any natural number $n$ can be written uniquely as $st$, where $s\in S_P$, $t\in T_P$ and $\gcd(s,t)=1$, $\N$ can be partitioned into
\begin{equation}
\label{eq:S_P-T_P-partition}
\N=\bigcup_{t\in T_P}tS_P^{(t)},
\end{equation}
where $S_P^{(t)}:=\{s\in S_P:\gcd(s,t)=1\}$.

We now claim that for all $f\in\cm$, $\ca\subset\ca(f)$ if and only if $\ca\subset\ca(f\cdot\1_{S_P})$. Note that once we prove this claim, the proof of this lemma is completed, because $f\cdot\1_{S_P}=g\cdot\1_{S_P}$ and therefore $\ca\subset\ca(f)$ if and only if $\ca\subset\ca(g)$.

First, assume $\ca\subset\ca(f)$. Let $K\in \ca$ be arbitrary and let $J:=K\setminus\{0\}$.
Since $J\in \ca(f)$, for all $\epsilon>0$ there exists a continuous function $F\colon \C\to[0,1]$ such that $F(z)=1$ for all $z\in\partial J$ and
$$
\limsup_{N\to\infty}\frac{1}{N}\sum_{1\leq n\leq N\atop f(n)\neq 0} F(f(n))\leq \epsilon.
$$
This, however, implies
$$
\limsup_{N\to\infty}\frac{1}{N}\sum_{1\leq n\leq N\atop f(n)\cdot\1_{S_P}(n)\neq 0} F(f(n))\leq \epsilon,
$$
which shows that $J\in\ca(f\cdot\1_{S_P})$ and therefore $K\in\ca(f\cdot\1_{S_P})$.

Next, assume $\ca\subset\ca(f\cdot\1_{S_P})$. Again, let $K\in\ca$ be arbitrary. Fix $\epsilon>0$ and let $J:=K\setminus\{0\}$.
Note that $d(S_P)=M(\1_{S_P})$ exists (due to \cref{thm:wirsing}) and $d(S_P)>0$ because $\sum_{p\in\P\setminus P}\tfrac{1}{p}<\infty$ and therefore
$\sum_{p\in\P}\frac1p\big(1-\1_{S_P}(p)\big)<\infty$ (cf.\ \cref{lem:lH2-C}).
%
%$\mathbb{D}(\1_{S_P},1)<\infty$ (cf.\ \cref{lem:lH2-C}).
Likewise, $\1_{S_P^{(t)}}$ is a multiplicative function and hence $d(S_P^{(t)})=M(\1_{S_P}^{(t)})$ exists (again due to \cref{thm:wirsing}) and is positive (also by \cref{lem:lH2-C}). Using~\eqref{eq:S_P-T_P-partition} and the fact that $d(tS_P^{(t)})=t^{-1}d(S_P^{(t)})$ we obtain
%Note,
%\begin{equation*}
%\sum_{t\in T_{P}}\tfrac{d(S_P^{(t)})}{t} \leq 1,
%\end{equation*}
%which follows from the fact that $d(tS_P^{(t)})=t^{-1}d(S_P^{(t)})$ and therefore, using \eqref{eq:S_P-T_P-partition},
\begin{equation}
\label{eq:S_P-T_P-1}
\sum_{t\in T_{P}}\tfrac{d(S_P^{(t)})}{t}=\sum_{t\in T_{P}}d(tS_P^{(t)})\leq d\left(\bigcup_{t\in T_{P}} tS_P^{(t)}\right) =d(\N)=1.
\end{equation}
For every $t\in T_P$ with $f(t)\neq 0$ the set $(f(t))^{-1}J\in\ca\subset\ca(f\cdot\1_{S_P})$. This means that for every $t\in T_P$ there exists a continuous function $F_t\colon \C\to[0,1]$ such that $F_t(z)=1$ for all $z\in \partial\big((f(t))^{-1} J\big)$ and
$$
\limsup_{N\to\infty}\frac{1}{N}\sum_{1\leq n\leq N\atop f\cdot\1_{S_P}(n)\neq 0} F_t(f(n)\cdot\1_{S_P}(n))\leq \frac{\epsilon d(S_P^{(t)})}{2}.
$$
Pick $M\geq 1$ sufficiently large such that $\sum_{t\in T_P\atop t>M }\tfrac{d(S_P^{(t)})}{t}\leq \tfrac{\epsilon}{2}$.
Define
$$
F(z):=\min_{t\in T_P\atop t\leq M}F_t\left((f(t))^{-1}z\right).
$$
Certainly, $F$ is continuous and $F(z)=1$ for all $z\in\partial J$.
Moreover,
\begin{align*}
\limsup_{N\to\infty}\frac{1}{N}&\sum_{1\leq n\leq N\atop f(n)\neq 0} F(f(n))
\\
&=
\limsup_{N\to\infty}
\frac{1}{N}\sum_{t\in T_P,\atop f(t)\neq 0}\left(\sum_{\substack{s\in S_P^{(t)}\cap \left[1,\tfrac{N}{t}\right] \\ f(s)\neq 0}} F(f(ts))\right)
\\
&\leq
\limsup_{N\to\infty}
\sum_{\substack{t\in T_P\\ t\leq M \\ f(t)\neq 0}}\left(
\frac{1}{N}\sum_{\substack{s\in S_P^{(t)}\cap \left[1,\tfrac{N}{t}\right] \\ f(s)\neq 0}} F(f(t)f(s))\right)~+~\sum_{t\in T_P\atop t>M }\frac{d(S_P^{(t)})}{t}
\\
&\leq
\sum_{\substack{t\in T_P\\ t\leq M \\ f(t)\neq 0}}
\frac{1}{t}\left(\limsup_{N\to\infty}
\frac{t}{N}\sum_{\substack{s\in S_P^{(t)}\cap \left[1,\tfrac{N}{t}\right] \\ f(s)\neq 0}} F_t(f(s))\right)~+~\frac{\epsilon}{2}
\\
&\leq
\sum_{\substack{t\in T_P\\ t\leq M \\ f(t)\neq 0}}
\frac{1}{t}\left(\limsup_{N\to\infty}
\frac{t}{N}\sum_{\substack{1\leq s\leq \tfrac{N}{t} \\ f\cdot\1_{S_P}(s)\neq 0}} F_t(f(s)\cdot\1_{S_P}(s))\right)~+~\frac{\epsilon}{2}
\\
&\leq
\frac{\epsilon}{2}\sum_{t\in T_P}
\frac{d(S_P^{(t)})}{t}~+~\frac{\epsilon}{2}
~\leq~
\epsilon.
\end{align*}
Since $\epsilon>0$ was arbitrary, we conclude that $J\in\ca(f)$ and therefore $K\in\ca(f)$.
\end{proof}

Let $\Vert x\Vert$ denote the distance of a real number $x$ to the closest integer.
For every $\delta>0$ and every $y\in \T$ define function $F_{y,\delta}\in C(\T)$ as
\begin{equation}
\label{eq:F_ye}
F_{y,\delta}(x):=
\begin{cases}
1-\frac{\Vert x-y\Vert}{\delta},&\text{if}~\Vert x-y\Vert\leq \delta;
\\
0,&\text{otherwise.}
\end{cases}
\end{equation}

\begin{Lemma}
\label{lem:continuous-measure-weak-limit}
Let $\nu$ be a Borel probability measure on $\T$ and let $(\nu_N)_{N\in\N}$ be a sequence of Borel probability measures on $\T$ that converges to $\nu$ in the weak-*-topology (i.e., for all $F\in C(\T)$,
$
\lim_{N\to\infty} \int_\T F\, d\nu_N= \int_\T F\, d\nu
$).
If $\nu$ is non-atomic then for every $\epsilon>0$ there exist $\delta>0$ and $N_0\in\N$ such that
$$
\int_\T F_{y,\delta}\, d\nu_N < \epsilon
$$
for all $y\in\T$ and for all $N\geq N_0$.
\end{Lemma}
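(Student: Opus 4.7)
\smallskip

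The plan is to first upgrade the pointwise non-atomicity $\nu(\{y\}) = 0$ to a uniform bound $\sup_{y\in\T}\int F_{y,\delta}\, d\nu < \epsilon/4$ for some $\delta > 0$, and then transfer this bound to the measures $\nu_N$ for $N$ large by using the weak-$\ast$ convergence together with the equicontinuity of the family $\{F_{y,\delta}\}_{y\in\T}$ in the parameter $y$.

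\smallskip

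For the first step, I would argue by contradiction: if no $\delta > 0$ works, then one can find sequences $\delta_n \to 0$ and $y_n \in \T$ with $\int F_{y_n,\delta_n}\, d\nu \geq \epsilon/4$. By compactness of $\T$, extract a subsequence (unrelabelled) with $y_n \to y^* \in \T$. Since $F_{y_n,\delta_n} \leq \mathbbm{1}_{[y_n-\delta_n,\, y_n+\delta_n]}$, for any fixed open neighborhood $U$ of $y^*$ one has $\int F_{y_n,\delta_n}\, d\nu \leq \nu(U)$ for all sufficiently large $n$. Shrinking $U$ down to $\{y^*\}$ and using continuity of $\nu$ from above together with $\nu(\{y^*\}) = 0$ yields $\limsup_n \int F_{y_n,\delta_n}\, d\nu = 0$, contradicting the lower bound $\epsilon/4$.

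\smallskip

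For the second step, I would fix such a $\delta > 0$ and exploit the fact that $F_{y,\delta}$ depends Lipschitz-continuously on $y$ in the supremum norm: a direct computation from the definition of $F_{y,\delta}$ gives $\|F_{y,\delta} - F_{y',\delta}\|_\infty \leq \Vert y - y'\Vert/\delta$. Choose a finite $(\delta\epsilon/4)$-net $\{y_1,\ldots,y_K\} \subset \T$, so that for every $y \in \T$ there exists $k$ with $\|F_{y,\delta} - F_{y_k,\delta}\|_\infty \leq \epsilon/4$. By weak-$\ast$ convergence applied to each of the finitely many continuous test functions $F_{y_1,\delta}, \ldots, F_{y_K,\delta}$, pick $N_0$ such that $\bigl|\int F_{y_k,\delta}\, d\nu_N - \int F_{y_k,\delta}\, d\nu\bigr| < \epsilon/4$ for all $1 \leq k \leq K$ and all $N \geq N_0$. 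Combining the three ingredients gives, for any $y \in \T$ (choosing $y_k$ nearby) and any $N \geq N_0$,
\[
\int_\T F_{y,\delta}\, d\nu_N \;\leq\; \int_\T F_{y_k,\delta}\, d\nu_N + \frac{\epsilon}{4} \;\leq\; \int_\T F_{y_k,\delta}\, d\nu + \frac{\epsilon}{2} \;\leq\; \int_\T F_{y,\delta}\, d\nu + \frac{3\epsilon}{4} \;<\; \epsilon.
\]

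\smallskip

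The only genuine obstacle is the first step, where one must pass from the pointwise condition $\nu(\{y\}) = 0$ to uniformity in $y$; the compactness argument above handles this cleanly. The remainder is standard machinery for upgrading weak-$\ast$ convergence against a fixed test function to convergence that is uniform over an equicontinuous family.
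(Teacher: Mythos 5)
Your proof is correct, and both of its steps are carried out differently from the paper's. For the uniform bound $\sup_{y\in\T}\int F_{y,\delta}\,d\nu<\epsilon/4$, the paper introduces $I_\delta(y):=\int F_{y,\delta}\,d\nu$, shows $I_\delta(y)\to 0$ pointwise by monotone convergence (using non-atomicity), and then invokes Dini's theorem to upgrade this to uniform convergence; you instead run a sequential compactness argument, dominating $F_{y_n,\delta_n}$ by the indicator of a shrinking interval and using continuity of $\nu$ from above together with $\nu(\{y^*\})=0$. For the transfer to the measures $\nu_N$, the paper argues by contradiction: it extracts a convergent subsequence $y_j\to y$, uses the domination $F_{y_j,\delta}\leq 2F_{y,2\delta}$ for large $j$, and applies weak-$*$ convergence against the \emph{single} test function $F_{y,2\delta}$ (which is why it needs the uniform bound at scale $2\delta$ rather than $\delta$). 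You instead exploit the equi-Lipschitz estimate $\Vert F_{y,\delta}-F_{y',\delta}\Vert_\infty\leq \Vert y-y'\Vert/\delta$, take a finite net, and apply weak-$*$ convergence to finitely many test functions; this is direct rather than by contradiction, produces an explicit $N_0$ once the net is fixed, and isolates the general principle that weak-$*$ convergence is uniform over equicontinuous families. The paper's route is marginally shorter; yours is more quantitative and avoids both Dini and the factor-of-two/doubled-radius bookkeeping. Both are complete and correct.
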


\begin{proof}
Define $I_\delta(y):=\int_\T F_{y,\delta}\, d\nu$. It is clear that $I_\delta$ is a continuous function on $\T$ for every $\delta\in(0,1)$. Also, the family $(I_\delta)_{\delta\in(0,1)}$ is monotonically decreasing in the sense that $I_{\delta_1}(y)\geq I_{\delta_2}(y)$ for all $y\in\T$ and all $\delta_1\geq\delta_2\in(0,1)$. Since $\nu$ is non-atomic, the functions $F_{y,\delta}(x)$ converge to $0$ for $\nu$-almost every $x$. Therefore, by the monotone convergence theorem, $I_\delta(y)$ converges to $0$ as $\delta\to0$ for every $y$.

We invoke now the classical Dini theorem, which states that a monotonically decreasing sequence of continuous real-valued functions that converges pointwise to a continuous function convergences uniformly. Therefore $I_\delta$ converges to $0$ uniformly as $\delta\to0$.

Fix now some $\epsilon>0$. Pick $\delta>0$ such that $\sup_{y\in\T} I_{2 \delta}(y)<\tfrac{\epsilon}{2}$. We claim that there exists $N_0$ such that for all $N\geq N_0$ and all $y\in\T$ we have
$$
\int_{\T} F_{y,\delta}\, d\nu_N <\epsilon.
$$

Assume that, contrary to our claim, there exists an increasing sequence of natural numbers $(N_j)_{j\in\N}$ such that for every $j\in \N$ there exists $y_j\in\T$ with
$$
\int_{\T} F_{y_j,\delta} \, d\nu_{N_j} \geq\epsilon.
$$
The sequence $(y_j)_{j\in\N}$ has a convergent subsequence. Hence, by passing to it if necessary, we can assume without loss of generality that $\lim_{j\to\infty} y_j$ exists. Let $y\in\T$ denote this limit.
It is straightforward to verify that for sufficiently large $j$ we have
$$
F_{y_j,\delta}(x)\leq 2F_{y,2\delta}(x),\qquad\forall x\in\T.
$$ 
Therefore, 
\begin{eqnarray*}
\limsup_{j\to\infty} \int_{\T} F_{y_j,\delta} \, d\nu_{N_j}
&\leq &
\limsup_{j\to\infty} \int_{\T} 2F_{y,2\delta} \, d\nu_{N_j}
\\
&= &
\int_{\T} 2F_{y,2\delta} \, d\nu
\\
&\leq &
2\sup_{y\in\T} I_{2 \delta}(y)
\\
&<&\epsilon.
\end{eqnarray*}
This contradicts $\int_{\T} F_{y_j,\delta} \, d\nu_{N_j} \geq\epsilon$ for all $j\in\N$.
\end{proof}

\begin{Lemma}\label{lem:lH-balls-at-zero-new}
Suppose $f\in\cm$ satisfies $\|f\|_1\neq 0$ and $f(n)\neq 0$ for all $n\in\N$. 
Then
\begin{equation}
\label{en:lH-ball-at-zero-11}
\lim_{\vep\to 0}\overline{d}\big(\{n\in\N: |f(n)|<\epsilon\}\big)= 0.
\end{equation}
\end{Lemma}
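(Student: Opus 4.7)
The plan is to pass from $f$ to the non-negative real-valued additive function $a(n):=-\log|f(n)|$, which is well defined since $f(n)\neq 0$, and to exploit the fact that a Borel probability measure on $\R$ has vanishing tails. The event $\{|f(n)|<\epsilon\}$ corresponds to the tail $\{a(n)>-\log\epsilon\}$, so establishing that $a$ admits a limiting distribution on $\R$ will yield the claim immediately.

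First, note that $|f|\in\cm$ takes values in $(0,1]$, and $\||f|\|_1=\|f\|_1\neq 0$. By \cref{lem:lH2-C} this gives
$$
\sum_{p\in\P}\tfrac{1}{p}\bigl(1-|f(p)|\bigr)<\infty.
$$
Next, I would verify the three series of the Erd{\H o}s--Wintner theorem (\cref{thm:Elliott-thm5.1}) for the additive function $a(n)=-\log|f(n)|$. This is exactly the calculation already carried out in the proof of \cref{cor:Elliott-thm5.1}: using the elementary inequalities $-\log x\leq \tfrac{e}{e-1}(1-x)$ for $x\in[e^{-1},1]$ and the fact that $\{p:|f(p)|<e^{-1}\}$ contributes a convergent sum to $\sum_p \tfrac{1}{p}$, one deduces that all three Erd{\H o}s--Wintner series converge. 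Consequently $a$ possesses a limiting distribution $\nu_a$, i.e.\ a Borel probability measure on $\R$ such that
$$
\lim_{N\to\infty}\frac{1}{N}\sum_{n=1}^N F(a(n))=\int_\R F\,d\nu_a
$$
for every $F\in C_b(\R)$.

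Finally, fix $\epsilon>0$ and set $M:=-\log\epsilon$. Choose a continuous function $G_M\colon\R\to[0,1]$ satisfying $\1_{(M,\infty)}\leq G_M\leq \1_{(M-1,\infty)}$ (for instance, a piecewise linear cut-off). Since $\{n:|f(n)|<\epsilon\}=\{n:a(n)>M\}$,
$$
\overline{d}\bigl(\{n\in\N:|f(n)|<\epsilon\}\bigr)\leq\limsup_{N\to\infty}\frac{1}{N}\sum_{n=1}^N G_M(a(n))=\int_\R G_M\,d\nu_a\leq \nu_a\bigl((M-1,\infty)\bigr).
$$
As $\epsilon\to 0$, we have $M\to\infty$ and the right-hand side tends to $0$ because $\nu_a$ is a probability measure on $\R$, proving \eqref{en:lH-ball-at-zero-11}.

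The only nontrivial step is the Erd{\H o}s--Wintner verification, and this is a direct transcription of the argument already used in \cref{cor:Elliott-thm5.1}; no new obstacle arises.
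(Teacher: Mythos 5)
Your proof is correct, but it takes a genuinely different route from the paper's. You reduce the lemma to the Erd\H{o}s--Wintner theorem (\cref{thm:Elliott-thm5.1}): the additive function $a(n)=-\log|f(n)|$ satisfies the three-series test (by exactly the computation in the proof of \cref{cor:Elliott-thm5.1}, using $\sum_p\frac1p(1-|f(p)|)<\infty$ from \cref{lem:lH2-C}), hence has a limiting distribution $\nu_a$ which, being a \emph{probability} measure on $\R$, has vanishing tails; your cut-off function $G_M$ then correctly converts tail decay of $\nu_a$ into decay of the upper density of $\{|f(n)|<\epsilon\}$ (the passage through $a$ rather than $|f|$ is the right move, since it is precisely the tightness of $\nu_a$ on $\R$ that rules out mass of $|f(n)|$ accumulating at $0$). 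The paper instead gives a self-contained elementary argument: it sieves out the multiples of the prime powers $p^k$ with $f(p^k)\leq\delta$ (whose reciprocal sum $F(\delta)$ tends to $0$) and bounds the remaining exceptional set by a Tur\'an--Kubilius-type first-moment estimate on $\sum_{p^k\mid n}(1-f(p^k))$. What each approach buys: yours is shorter and leans on machinery the paper has already imported, essentially showing the lemma is a corollary of \cref{cor:Elliott-thm5.1}; the paper's avoids invoking Erd\H{o}s--Wintner altogether and yields an explicit quantitative rate $\Oh\big(1/\sqrt{-\log\vep}\big)+F\big(\exp(-\sqrt{-\log\vep})\big)$, which your qualitative tightness argument does not provide (though none is needed for the application).
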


The following proof of \cref{lem:lH-balls-at-zero-new} was provided by a user with alias Lucia as an answer to a question posted by the third author at http://mathoverflow.net. We gratefully acknowledge Lucia's help. 

%was provided to the authors by the following very helpful answer on the online-platform mathoverflow.net: {\scriptsize \href{http://mathoverflow.net/questions/215170/a-question-about-0-1-valued-multiplicative-functions}{http://mathoverflow.net/questions/215170/a-question-about-0-1-valued-multiplicative-functions}}. 

\begin{proof}[Proof of \cref{lem:lH-balls-at-zero-new} (see {\scriptsize \href{http://mathoverflow.net/questions/215170}{http://mathoverflow.net/questions/215170}})]
By replacing $f$ with $\vert f\vert$ if necessary, we can ssume without loss of generality that $f$ takes values in $(0,1]$.
For $0<\delta<1$ and $k\geq 1$ put 
\[
F_k(\delta):= \sum_{p\in\P,\atop f(p^k) \leq \delta} \frac{1}{p^k}\quad\text{and}\quad
F(\delta):=\sum_{k=1}^\infty F_k(\delta).
\]
Since $\|f\|_1>0$, it follows from \cref{lem:lH2-C} that $\sum_{p\in\P}\frac1p\big(1-f(p)\big)<\infty$.
This shows that $F_1(\delta)<\infty$ for every $0<\delta<1$ and so $F_k$ is a well defined function for all $k\geq 1$.
Moreover, since $\sum_{k\geq 2}\sum_{p\in\P}\frac{1}{p^k}=\sum_{p\in\P}\frac{1}{p(p-1)}<\infty$, the function $F$ is well defined in $(0,1)$.

We claim that $F(\delta)$ converges to zero as $\delta\to0$.  For $0<\delta<1$, let 
$$
\sB_\delta:=\{p^k: p\in\P, k\in\N, f(p^k)\leq \delta\}.
$$
We have $F(\delta)=\sum_{p^k\in \sB_\delta}\frac{1}{p^k}<\infty$. In particular, $F(1/2)<\infty$ and there exists a finite set $H\subset\sB_{\frac{1}{2}}$ such that $\sum_{p^k\in\sB_{1/2}\setminus H}\frac{1}{p^k}\leq \vep$. Take $0<\delta<\min_{p^k\in H} f(p^k)$. Then $\sB_\delta\subset \sB_{1/2}\setminus H$ and therefore $F(\delta)\leq \sum_{p^k\in\sB_{1/2}\setminus H}\frac{1}{p^k}\leq \vep$.

%We claim that $F(\delta)$ converges to zero as $\delta\to0$. Let $\sB$ denote the set $\sB:=\{p^k: p\in\P, k\in\N, f(p^k)\leq \frac12\}$. Clearly,
%$F(\frac12)=\sum_{p^k\in\sB}\frac{1}{p^k}$ and we know that $F(\frac{1}{2})<\infty$. Hence, for every $\vep>0$ there exists a finite set $H\subset\sB$ such that $\sum_{p^k\in\sB\setminus H}\frac{1}{p^k}\leq \vep$. Choose any $\delta>0$ with $\delta< \min_{p^k\in H} f(p^k)$.
%Then we have $\{p^k: p\in\P, k\in\N, f(p^k)\leq \delta\}\subset\sB\setminus H$ and therefore
%$F(\delta)\leq \sum_{p^k\in\sB\setminus H}\frac{1}{p^k}\leq \vep$. This proves the claim.

For $0<\delta<1$, let $\cf_{\sB_\delta}$ denote the set of $\sB_\delta$-free numbers, that is $\cf_{\sB_\delta}:=\N\setminus\left(\bigcup_{p^k\in \sB_\delta} p^k\N\right)$. It is straightforward to show that
$$
d(\cf_{\sB_\delta})=1-d\left(\bigcup_{p^k\in \sB_\delta} p^k\N\right)\geq 1-\sum_{p^k\in \sB_\delta}\frac{1}{p^k}=1-F(\delta).
$$
So,
\begin{equation*}
\overline{d}\big(\{n\in\N: f(n)<\epsilon\}\big)\leq \overline{d}\big(\{n\in \cf_{\sB_\delta}: f(n)<\epsilon\}\big) + F(\delta).
\end{equation*}
Notice that $x\geq \exp\left(2\log(\delta)(1-x) \right)$ for any $x\in (\delta,1]$. Moreover, for $n=p_1^{k_1}\cdots p_r^{k_r}\in \cf_{\sB_\delta}$, $p_i^{k_i}\in \cf_{\sB_\delta}$ for $1\leq i \leq r$, so, in particular, $p_i^{k_i}\not\in \sB_\delta$ whence $f(p_i^{k_i})>\delta$, $1\leq i\leq r$. Thus, for each $n=p_1^{k_1}\cdots p_r^{k_r}\in \cf_{\sB_\delta}$, we have
\begin{equation}
\label{eqn:lH-ball-at-zero-21}
f(n)= f(p_1^{k_1})\cdots f(p_r^{k_r})\geq  \exp\left(2\log\left(\delta\right)\sum_{i=1}^k (1-f(p_i^{k_i}))\right).
\end{equation}
So, if $f(n)<\vep$ and $n\in \cf_{\sB_\delta}$, then \eqref{eqn:lH-ball-at-zero-21} implies that
\[
\sum_{p^k\mid n} (1-f(p^k))\geq \frac{\log(\vep)}{2\log(\delta)}.
\]
This shows that
\begin{eqnarray*}
\frac{1}{x}\big|\{n\leq x:n\in \cf_{\sB_\delta},~ f(n)<\epsilon\}\big|
&\leq&\frac{1}{x}\frac{2\log(\delta)}{\log(\vep)}\sum_{n\leq x}\sum_{p^k\mid n} (1-f(p^k))\\
&\leq&\frac{2\log(\delta)}{\log(\vep)}\sum_{p\in\P,\atop k\in\N}\frac{(1-f(p^k))}{p^k}\\
&=&\Oh\left(\frac{\log(\delta)}{\log(\vep)}\right).
\end{eqnarray*}
Finally, if we set $\delta=\exp(-\sqrt{-\log(\vep)})$, which goes to zero as $\vep$ goes to zero, then this shows that for $\vep>0$ sufficiently small
\begin{equation*}
\overline{d}\big(\{n\in\N: f(n)<\vep\}\big)=  \Oh\left(\frac{1}{\sqrt{-\log(\vep)}}\right)+ F\left(\exp(-\sqrt{-\log(\vep)})\right),
\end{equation*}
which completes the proof.
\end{proof}

We are now ready to give a proof of \cref{prop:e-sets-2}.

\begin{proof}[Proof of \cref{prop:e-sets-2}]
Suppose $E$ belongs to $\DtwoB$. This means that $E$ is of the form $E(f,K):=\{n\in\N:f(n)\in K\}$, where $f\in\cm$ with $\|f\|_1\neq0$ and $K$ is an elementary set in polar coordinates.
If $f$ has a concentration point then $E\in \Dthree{1}$ and we are done. Let us therefore assume that $f$ possesses no concentration points.
It remains to show that any elementary set in polar coordinates belongs to $\ca(f)$, because this implies that $E\in\Dfour$.

Let $f'\in\cm$ denote the multiplicative function uniquely determined by
\begin{equation*}
f'(p^k):=
\begin{cases}
f(p^k),&\text{if $f(p^k)\neq 0$,}
\\
1,&\text{otherwise.}
\end{cases}
\end{equation*} 
Let $P$ denote the set of all primes $p$ such that $f(p)= f'(p)$. Since $\|f\|_1\neq0$, it follows from \cref{lem:lH2-C} that $\sum_{p\in\P\setminus P}\tfrac{1}{p}<\infty$.
Therefore, using \cref{lem:dilation-invariant-algebras-contained-in-a(f)}, we deduce that $\ca(f)$ contains all elementary sets in polar coordinates if and only if $\ca(f')$ does. We can therefore assume without loss of generality that $f(n)\neq0$ for all $n\in\N$.

Recall that $e(x):=e^{2\pi i x}$. Now suppose $K:=\{re(\varphi): \varphi\in I_1, r\in I_2 \}$, where $I_1$ is a subinterval of $\T$ and $I_2$ is a subinterval of $[0,1]$.
We assume that both $I_1$ and $I_2$ are closed intervals and remark that for open and half-open intervals the same argument applies. Choose $a_1,b_1\in\T$ such that $I_1=[a_1,b_1]$ and $a_2,b_2\in[0,1]$ such that $I_2=[a_2,b_2]$.

Let $h(n):=|f(n)|$, $n\in\N$, and let $g(n):=\frac{f(n)}{|f(n)|}$.
Clearly, $f=g\cdot h$. 
Let $a\colon \N\to\T$ be the (unique) additive function such that $g(n)=e(a(n))$ for all $n\in \N$.

We now distinguish three cases:
\begin{enumerate}
[label=\text{(\roman*)}, ref=\text{(\roman*)}, leftmargin=*]
\item\label{itm:thmA-case-1}
$\sum_{p\in\P\atop h(p)\neq 1}\tfrac{1}{p}<\infty$ and $\sum_{p\in\P\atop m a(p)\neq 0\bmod 1}\tfrac{1}{p}<\infty$ for some $m\in\N$;
\item\label{itm:thmA-case-2}
$\sum_{p\in\P\atop h(p)\neq 1}\tfrac{1}{p}=\infty$;
\item\label{itm:thmA-case-3}
$\sum_{p\in\P\atop m a(p)\neq 0\bmod 1}\tfrac{1}{p}=\infty$ for all $m\in\N$.
%\item\label{itm:thmA-case-4}
%$\sum_{p\in\P\atop h(p)\neq 1}\tfrac{1}{p}=\infty$ and $\sum_{p\in\P\atop m a(p)\neq 0\bmod 1}\tfrac{1}{p}=\infty$ for all $m\in\N$.
\end{enumerate}

In case \ref{itm:thmA-case-1}, one of the $m$-th roots of unity is a concentration point of $f$, which contradicts the assumption that $f$ possesses no concentration points.
Therefore  we only have to deal with cases \ref{itm:thmA-case-2} and \ref{itm:thmA-case-3}.

In case \ref{itm:thmA-case-2}, $h(n)$ possesses a continuous limiting distribution given by a Borel probability measure $\nu_2$ on $[0,1]$ (cf.\ \cref{cor:Elliott-thm5.1}).
Let $\epsilon>0$ be arbitrary.
Pick a continuous  $F_2\colon \R\to[0,1]$ such that $F_2(a_2)=F_2(b_2)=1$ and $\int_0^1 F_2\, d\nu_2\leq \epsilon$; such a function is guaranteed to exist because $\nu_2$ is non-atomic. Define a new function $F\colon \C\to[0,1]$ as $F(re(\varphi))=F_2(r)$. Notice that $F(z)=1$ for all $z\in\partial K$. Moreover,
\begin{eqnarray*}
\limsup_{N\to\infty}\frac{1}{N}\sum_{n=1}^N F(f(n))
&=&\limsup_{N\to\infty}\frac{1}{N}\sum_{n=1}^N F(g(n)h(n))
\\
&=&\limsup_{N\to\infty}\frac{1}{N}\sum_{n=1}^N F_2(h(n))
\\
&=&
\int_0^1 F_2\, d\nu_2\leq \epsilon.
\end{eqnarray*}
%In other words, we have found a continuous function $F\colon \C\to[0,1]$ which satisfies $F(z)=1$ for all $z\in\partial K$ and such that
%$$
%\limsup_{N\to\infty}\frac{1}{N}\sum_{n=1}^N F(f(n))\leq \epsilon.
%$$
Since $\epsilon>0$ was chosen arbitrarily, this proves that $K\in\ca(f)$.

Next, we deal with case \ref{itm:thmA-case-3}.
Using \cref{thm:Elliott-thm8.9} we can find $\alpha\colon \N\to\T$ and a probability measure $\nu$ on $\T$ such that if $a_N\colon \{1,\ldots,N\}\to\T$ denotes the sequence
$$
a_N(n):=a(n)-\alpha(N),\qquad 1\leq n\leq N,
$$ 
then $(a_N)_{N\in\N}$ has limiting distribution $\nu$. Moreover, this limiting distribution is continuous because $\sum_{p\in\P\atop m a(p)\neq 0\bmod 1}\tfrac{1}{p}=\infty$ for all $m\in\N$.
Fix $\epsilon>0$. For $y\in\T$ let $\delta_y$ denote the point-mass at $y$. Define
$$
\nu_N:=\frac{1}{N}\sum_{n=1}^N \delta_{a(n)-\alpha(N)}.
$$
By definition, the limit of $(\nu_N)_{N\in\N}$ in the weak-*-topology equals $\nu$.
Let $F_{y,\delta}$ be as defined in \eqref{eq:F_ye}.
Using \cref{lem:continuous-measure-weak-limit} we can find $\delta>0$ and $N_0\in\N$ such that
\begin{equation}\label{eq:b-o-int}
\int_\T F_{y,\delta}\, d\nu_N < \frac{\epsilon}{3}
\end{equation}
for all $y\in\T$ and for all $N\geq N_0$.
In view of \cref{lem:lH-balls-at-zero-new} we have
\begin{equation*}
\label{en:lH-ball-at-zero-11-new}
\lim_{\eta\to 0}\overline{d}\big(\{n\in\N: |f(n)|<\eta\}\big)= 0.
\end{equation*}
In particular, there exists $\eta>0$ such that
$$
\overline{d}\big(\{n\in\N: |f(n)|<\eta\}\big)<\frac{\epsilon}{3}.
$$
Let $\tilde{F}\colon\{re(\varphi): \varphi\in\T, r\in[\eta,1]\}\to[0,1]$ denote the function
$$
\tilde{F}(re(\varphi)):=\max\{F_{a_1,\delta}(\varphi),F_{b_1,\delta}(\varphi)\}.
$$
Let $F\colon \C\to [0,1]$ be an arbitrary continuous continuation of $\tilde{F}$ to all of $\C$ that satisfies $F(z)=1$ for all $z\in\partial K$.
Then
\begin{eqnarray*}
\limsup_{N\to\infty}\frac{1}{N}\sum_{n=1}^N F(f(n))
&=&\limsup_{N\to\infty}\frac{1}{N}\sum_{n=1}^N \Big(\1_{[|f|<\eta]}(n)F(f(n))+\1_{[|f|\geq\eta]}(n)F(f(n))\Big)
\\
&\leq&\limsup_{N\to\infty}\frac{1}{N}\sum_{n=1}^N \1_{[|f|\geq\eta]}(n)F(f(n)) +\frac{\epsilon}{3}
\\
&\leq&\limsup_{N\to\infty}\frac{1}{N}\sum_{n=1}^N \1_{[|f|\geq\eta]}(n) \tilde{F}(h(n)g(n)) +\frac{\epsilon}{3}
\\
&\leq&\limsup_{N\to\infty}\frac{1}{N}\sum_{n=1}^N F_{a_1,\delta}(a(n)) +\limsup_{N\to\infty}\frac{1}{N}\sum_{n=1}^N F_{b_1,\delta}(a(n)) +\frac{\epsilon}{3}.
\end{eqnarray*}
Now observe that
$$
\frac{1}{N}\sum_{n=1}^N F_{a_1,\delta}(a(n)) =
\frac{1}{N}\sum_{n=1}^N F_{a_1-\alpha(N),\delta}(a(n)-\alpha(N))
=\int_{\T} F_{a_1-\alpha(N),\delta}\, d\nu_N.
$$
It follows from \eqref{eq:b-o-int} that
$$
\limsup_{N\to\infty}\frac{1}{N}\sum_{n=1}^N F_{a_1,\delta}(a(n))\leq\frac{\epsilon}{3}.
$$
An analogous argument shows that
$$
\limsup_{N\to\infty}\frac{1}{N}\sum_{n=1}^N F_{b_1,\delta}(a(n))\leq\frac{\epsilon}{3}.
$$
We conclude that
\begin{eqnarray*}
\limsup_{N\to\infty}\frac{1}{N}\sum_{n=1}^N F(f(n))
&\leq&\limsup_{N\to\infty}\frac{1}{N}\sum_{n=1}^N F_{a_1,\delta}(a(n)) +\limsup_{N\to\infty}\frac{1}{N}\sum_{n=1}^N F_{b_1,\delta}(a(n)) +\frac{\epsilon}{3}
\\
&\leq &\frac{\epsilon}{3}+\frac{\epsilon}{3}+\frac{\epsilon}{3}=\epsilon.
%\\
%&=&\epsilon.
\end{eqnarray*}
To summarize, the function $F\colon \C\to[0,1]$ is continuous, it satisfies $F(z)=1$ for all $z\in\partial K$ and it also satsifies
$$
\limsup_{N\to\infty}\frac{1}{N}\sum_{n=1}^N F(f(n))\leq \epsilon.
$$
Since $\epsilon>0$ is arbitrary, this proves that $K\in\ca(f)$.
\end{proof}

% =======================================================SUBSECTION
\subsection{Proof of \cref{prop:finding-K_1-and-K_2}}
\label{sec:finding-K_1-and-K_2}
% ================================================================

The purpose of this subsection is to present a proof of \cref{prop:finding-K_1-and-K_2}. %Establishing \cref{prop:finding-K_1-and-K_2} for sets $E\in\cd_3$ is easy and straightforward.
The proof of \cref{prop:finding-K_1-and-K_2} for the case $E\in\Dthree{\infty}$ is fairly easy and straightforward; the proof for the case $E\in\Dfour$, however, is more complicated and relies on the following lemma.

\begin{Lemma}
\label{lem:finding-K_1-and-K_2-for-D_4}
Let $f\in\cm$ with $\|f\|_1\neq 0$ and let $K\in\ca(f)$. Then for all $\epsilon>0$ there exist sets $K_1\subset K_2\subset \C$ and a set of prime numbers $P\subset \P$ satisfying:
\begin{itemize}
\item[--] $f(p)\neq 0$ for all $p\in P$;
\item[--] $\sum_{p\in P}\tfrac{1}{p}=\infty$;
\item[--] $f(p) K_1\subset K \subset f(p)K_2$ for all $p\in P$;
%\item[--] $f(p)K_2\supset K$ for all $p\in P$;
\item[--] $\overline{d}(\{n\in \N: f(n)\in K_2\setminus K_1\})\leq\epsilon $.
\end{itemize}  
\end{Lemma}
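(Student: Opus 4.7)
The plan is to construct $K_1$ and $K_2$ as slight shrinkage and enlargement of a rotate $w_0^{-1}K$ of $K$, where $w_0$ is an accumulation point of $\{f(p):p\in\P\}$, and to take for $P$ the primes $p$ with $f(p)$ very close to $w_0$.

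By $\|f\|_1\neq 0$ and \cref{lem:lH2-C}, $\sum_p\tfrac{1-|f(p)|}{p}<\infty$, so $\sum_{p:|f(p)|\geq 1/2}\tfrac{1}{p}=\infty$. A nested-ball compactness argument on the compact annulus $\{1/2\leq|z|\leq 1\}$ then produces $w_0$ such that $P_\eta:=\{p\in\P:|f(p)-w_0|<\eta\}$ satisfies $\sum_{p\in P_\eta}\tfrac{1}{p}=\infty$ for every $\eta>0$. A further application of \cref{lem:lH2-C} forces $|w_0|=1$: otherwise, for small enough $\eta$, $P_\eta$ would sit inside $\{p:|f(p)|<1-\delta\}$ for some $\delta>0$, whose reciprocals converge, a contradiction. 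In particular $f(p)\neq 0$ on $P_\eta$.

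Since $K\in\ca(f)$, choose a compactly supported continuous $F\colon\C\to[0,1]$ with $F\equiv 1$ on $\partial(K\setminus\{0\})$ and
\[
\limsup_{N\to\infty}\frac{1}{N}\sum_{n\leq N,\,f(n)\neq 0}F(f(n))\leq \frac{\epsilon}{4}.
\]
Set $U_0:=\{F>1/2\}$, an open neighborhood of $\partial(K\setminus\{0\})$, and by uniform continuity of $F$ fix $\rho>0$ with $(\partial(K\setminus\{0\}))^{\rho}\subset U_0$. Define
\[
K_1:=w_0^{-1}\bigl((K\setminus\{0\})\setminus U_0\bigr),\qquad K_2:=w_0^{-1}\bigl((K\setminus\{0\})\cup U_0\bigr),
\]
and $P:=P_\eta$ for $\eta$ to be fixed below. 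For $z\in K_1$, the point $w_0z\in K$ lies at distance at least $\rho$ from $\partial K$, and for $p\in P_\eta$ one has $|f(p)z-w_0z|=|f(p)\overline{w_0}-1|\cdot|w_0z|\leq\eta$ (using $|w_0|=1$ and restricting attention to $|z|\leq 1$), so $f(p)z\in K$ whenever $\eta<\rho$; the reverse inclusion $K\subset f(p)K_2$ is symmetric, and the point $0$ is handled trivially since $f(p)\neq 0$ on $P$.

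The main obstacle is the density estimate $\overline{d}(\{n:f(n)\in K_2\setminus K_1\})\leq\epsilon$. Since $K_2\setminus K_1\subset w_0^{-1}U_0$, this reduces via Markov to bounding $\limsup_N\tfrac{1}{N}\sum_{n\leq N}F(w_0 f(n))$, i.e.\ to transferring the controlled mean of $F\circ f$ to a mean bound for $F\circ(w_0 f)$. The crucial ingredient is multiplicativity: for $p\in P_\eta$ with $\gcd(p,n)=1$, $f(pn)=f(p)f(n)=w_0f(n)+O(\eta)$, so $F(f(pn))$ and $F(w_0 f(n))$ differ by at most the modulus of continuity of $F$ evaluated at $\eta$. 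Averaging this inequality over $p\in P_\eta\cap[1,y]$ with weights $1/p$ (which sum to $+\infty$) and rewriting the right-hand sides as $\tfrac{1}{N}\sum_{m\leq pN,\,p\|m}F(f(m))$, one hopes to invoke the hypothesis $\tfrac{1}{M}\sum_{m\leq M}F(f(m))\leq\epsilon/4$ on the parent sums and then pass $y\to\infty$ and $\eta\to 0$ in the correct order. The crux, which I expect to be the main technical difficulty, is to estimate the slice $\{m\leq pN:p\|m\}$ sharply — avoiding a trivial loss of a factor of $p$ — which plausibly requires a Tur\'an--Kubilius-style second-moment computation leveraging the full multiplicative structure of $f$.
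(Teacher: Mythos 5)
Your construction is, up to bookkeeping, the one the paper uses: the existence of a direction $w_0$ with $|w_0|=1$ such that $\sum_{p\in P_\eta}1/p=\infty$ for every $\eta>0$ is exactly \cref{lem:harmonic-subset-of-primes} (proved by the same compactness argument together with \cref{lem:lH2-C}), and shrinking/enlarging a rotate of $K\setminus\{0\}$ along the super-level set $\{F>1/2\}$ of the function witnessing $\partial(K\setminus\{0\})\in\cn(f)$, verifying $f(p)K_1\subset K\subset f(p)K_2$ by uniform continuity, and re-adjoining the point $0$ is exactly \cref{lem:finding-K_1-and-K_2} followed by the short reduction that constitutes the paper's actual proof of \cref{lem:finding-K_1-and-K_2-for-D_4}. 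Through the third bullet point your argument is sound.

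The genuine gap is the fourth bullet, $\overline{d}(\{n\in\N: f(n)\in K_2\setminus K_1\})\leq\epsilon$: your final paragraph only describes a strategy you ``hope'' will work and explicitly labels its key estimate an unresolved ``main technical difficulty''. Since this density bound is the whole point of the lemma (it is what becomes the $\|\1_{E_1}-\1_{E_2}\|_1\leq\epsilon$ input to \cref{prop:pKOc}), the proposal is incomplete precisely where the content lies. For comparison: the paper does \emph{not} average over primes or invoke Tur\'an--Kubilius here (that machinery is reserved for \cref{prop:pKOc}); it fixes a \emph{single} $p\in P$ with $1/p$ small, bounds the density via Markov by $2\limsup_N\frac1N\sum_{n\leq N}F(w_0f(n))$, replaces $F(w_0f(n))$ by $F(f(p)f(n))$ at cost $\epsilon/4$ by uniform continuity, rewrites $F(f(p)f(n))=F(f(pn))$ for $p\nmid n$ by multiplicativity, and then compares $\limsup_N\frac1N\sum_{n\leq N,\,p\nmid n}F(f(pn))$ directly with $\limsup_N\frac1N\sum_{n\leq N}F(f(n))$ under the reindexing $m=pn$ --- so your instinct that this comparison is the delicate step is correct, but you neither carry out the paper's one-prime version nor your own averaged version. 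Note also that the averaged route you sketch has a structural defect beyond being unexecuted: weighting by $1/p$ over $p\leq y$ forces you to compare $\frac1N\sum_n F(w_0f(n))$ with averages of $\frac1M\sum_{m\leq M}F(f(m))$ taken at the scattered scales $M=N/p$, which in the end controls only a lim\,inf of the quantity in question along each $N$, whereas the lemma requires a bound on the \emph{upper} density; closing that mismatch requires an additional idea.
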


The proof of \cref{lem:finding-K_1-and-K_2-for-D_4} hinges on two other lemmas, namely Lemmas \ref{lem:harmonic-subset-of-primes} and \ref{lem:finding-K_1-and-K_2}, which we state and prove first.

\begin{Lemma}
\label{lem:harmonic-subset-of-primes}
Let $f\in\cm$ with $\|f\|_1\neq 0$. Then there exists $u\in\C$ with $|u|= 1$ such that for all $\delta>0$ the set $P_{u,\delta}:=\{p\in\P:|f(p)-u|<\delta\}$ satisfies $\sum_{p\in P_{u,\delta}}\tfrac{1}{p}=\infty$.
\end{Lemma}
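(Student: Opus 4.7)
The plan is to use the hypothesis $\|f\|_1\neq 0$ via \cref{lem:lH2-C} and a measure-theoretic compactness argument to locate the desired concentration point $u$ on the unit circle.

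First I would invoke \cref{lem:lH2-C} to deduce from $\|f\|_1\neq 0$ that $\sum_{p\in\P}\tfrac{1}{p}(1-|f(p)|)<\infty$. Combined with the divergence of $\sum_{p\in\P}\tfrac{1}{p}$, this gives $\sum_{p\in\P,\,f(p)\neq 0}\tfrac{|f(p)|}{p}=\infty$. The key idea is then to consider the (infinite) Borel measure on the closed unit disk $\overline{D}:=\{z\in\C:|z|\leq 1\}$ defined by
$$
\nu:=\sum_{p\in\P,\,f(p)\neq 0}\frac{|f(p)|}{p}\,\delta_{f(p)},
$$
which has total mass $\nu(\overline{D})=\infty$.

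Next I would argue by a covering/compactness argument that there exists a point $u\in\overline{D}$ such that $\nu(U)=\infty$ for every open neighborhood $U$ of $u$. Indeed, if no such point existed, each $z\in\overline{D}$ would have an open neighborhood $U_z$ with $\nu(U_z)<\infty$; compactness of $\overline{D}$ would then yield a finite subcover, whence $\nu(\overline{D})<\infty$, a contradiction.

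Then I would verify that this $u$ must lie on the unit circle. Suppose, for contradiction, that $|u|<1$, and choose $\eta>0$ with $|u|<1-2\eta$. Then the neighborhood $U:=\{z\in\C:|z-u|<\eta\}$ is contained in $\{z:|z|<1-\eta\}$, so
$$
\nu(U)\;\leq\;\sum_{p\in\P,\,|f(p)|<1-\eta}\frac{|f(p)|}{p}\;\leq\;\frac{1}{\eta}\sum_{p\in\P}\frac{1-|f(p)|}{p}\;<\;\infty,
$$
contradicting the choice of $u$. Hence $|u|=1$. Finally, for any $\delta>0$ the ball $B_u(\delta):=\{z\in\C:|z-u|<\delta\}$ satisfies $\nu(B_u(\delta))=\infty$, and since $|f(p)|\leq 1$ we obtain
$$
\sum_{p\in P_{u,\delta}}\frac{1}{p}\;\geq\;\sum_{p\in\P,\,f(p)\in B_u(\delta)}\frac{|f(p)|}{p}\;=\;\nu(B_u(\delta))\;=\;\infty,
$$
which is the desired conclusion. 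I do not expect any serious obstacle here; the only subtlety is ensuring a \emph{single} $u$ works for \emph{all} $\delta>0$ simultaneously, and passing to the measure $\nu$ (rather than iterating over shrinking $\delta$) handles this cleanly in one shot.
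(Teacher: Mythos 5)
Your proof is correct and rests on the same two ingredients as the paper's: compactness of a bounded region of $\C$ and the estimate $\sum_{p\in\P}\frac{1}{p}(1-|f(p)|)<\infty$ supplied by \cref{lem:lH2-C}. The paper runs the finite-subcover contradiction directly on $\mathbb{S}^1$ and handles the primes with $|f(p)|$ bounded away from $1$ separately, whereas you package both steps into a single infinite measure on the closed disk; this is only a cosmetic repackaging of the same argument.
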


\begin{proof}
Recall that $\mathbb{S}^1=\{z\in\C: |z|=1\}$.
Suppose that for every $u\in\mathbb{S}^1$ there exists some $\delta_u>0$ such that $\sum_{p\in P_{u,\delta_u}}\tfrac{1}{p}<\infty$. Since $B(u,\delta_u):=\{z\in\C: |u-z|<\delta_u\}$, $u\in\mathbb{S}^1$, is an open cover of the compact set $\mathbb{S}^1$, we can find a finite sub-cover. In other words, there exist $u_1,\ldots,u_r\in\C$, $|u_i|= 1$ for $i=1,\ldots,r$, such that $\bigcup_{i=1}^rB(u_i,\delta_{u_i})\supset\mathbb{S}^1$.
Since $\bigcup_{i=1}^rB(u_i,\delta_{u_i})$ is an open set containing $\mathbb{S}^1$, there exists some $\delta>0$ such that the set $\{z\in\C: 1-\delta <|z|<1+\delta\}$ is contained in $\bigcup_{i=1}^rB(u_i,\delta_{u_i})$. Define $P:=\{p\in\P: |f(p)|>1-\delta\}$.
Then we have
$$\sum_{p\in P}\frac{1}{p} ~\leq~ \sum_{i=1}^r \left(\sum_{p\in P_{u_i,\delta_{u_i}}}\frac{1}{p}\right)~<~\infty.$$
One the other hand, it follows from $\|f\|_1\neq 0$ and \cref{lem:lH2-C} that $\sum_{p\in\P}\frac1p\big(1-|f(p)|\big)<\infty$ and therefore
$$
\sum_{p\in \P\setminus P} \frac{1}{p}\leq \frac{1}{\delta}\sum_{p\in \P} \frac{1}{p}(1-|f(p)|)<\infty.
$$
However, $\sum_{p\in \P\setminus P} \frac{1}{p}<\infty$ and $\sum_{p\in P} \frac{1}{p}<\infty$ yield a contradiction.
\end{proof}

%\begin{Lemma}
%\label{lem:harmonic-subset-of-primes}
%Let $f\in\cm$. Then there exists $u\in\C$ with $|u|\leq 1$ such that for all $\delta>0$ the set $P_{u,\delta}:=\{p\in\P:|f(p)-u|<\delta\}$ satisfies $\sum_{p\in P_{u,\delta}}\tfrac{1}{p}=\infty$.
%\end{Lemma}

%\begin{proof}We present a proof by contradiction. Assume that for every point $u\in\C$ with $|u|\leq 1$ there exists some $\delta_u>0$ such that $\sum_{p\in P_{u,\delta_u}}\tfrac{1}{p}\leq\infty$. Since $B(u,\delta_u):=\{z\in\C: |u-z|<\delta_u\}$, $u\in\C$ with $|u|\leq 1$, is an open cover of the set $\{z\in\C:|z|\leq 1\}$, by compactness we can find a finite subcover. In other words, there exist $u_1,\ldots,u_r\in\C$, $|u_i|\leq 1$ for $i=1,\ldots,r$, such that $\bigcup_{i=1}^rB(u_i,\delta_{u_1})\supset\{z\in\C:|z|\leq 1\}$.
%Since $|f(p)|\leq 1$ for all $p\in\P$, we deduce that $\bigcup_{i=1}^r P_{u_i,\delta_{u_i}}=\P$. 
%Hence,
%$$
%\sum_{p\in\P}\frac{1}{p} ~\leq~ \sum_{i=1}^r\left(\sum_{p\in P_{u_i,\delta_{u_i}}}\frac{1}{p}\right)~<~\infty.
%$$
%Clearly, this contradicts the well-known fact that $\sum_{p\in\P}\tfrac{1}{p}=\infty$.
%\end{proof}

\begin{Lemma}
\label{lem:finding-K_1-and-K_2}
Let $f\in\cm$ with $\|f\|_1\neq 0$, let $J\subset\C\setminus\{0\}$ and assume that $\partial J\in\cn(f)$. Then for all $\epsilon>0$ there exist sets $J_1\subset J_2\subset \C\setminus\{0\}$ and a set of prime numbers $P\subset \P$ satisfying:
\begin{itemize}
\item[--] $f(p)\neq 0$ for all $p\in P$;
\item[--] $\sum_{p\in P}\tfrac{1}{p}=\infty$;
\item[--] $f(p)J_1\subset J\subset f(p)J_2$ for all $p\in P$;
%\item[--] $f(p)J_2\supset J$ for all $p\in P$;
\item[--] $\overline{d}(\{n\in \N: f(n)\in J_2\setminus J_1\})\leq\epsilon $.
\end{itemize}  
\end{Lemma}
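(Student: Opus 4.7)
The plan is to apply \cref{lem:harmonic-subset-of-primes} to secure a point $u \in \C$ with $|u| = 1$ such that, for every $\delta > 0$, the set $P_{u,\delta} := \{p \in \P : |f(p) - u| < \delta\}$ satisfies $\sum_{p \in P_{u,\delta}} \tfrac{1}{p} = \infty$. For a parameter $\delta > 0$ (to be determined at the end), I would then define the inner and outer multiplicative $\delta$-neighborhoods of $u^{-1} J$,
\[
J_1 := \{z \in \C \setminus \{0\} : wz \in J \text{ for every } w \in B(u,\delta)\}, \qquad
J_2 := \{z \in \C \setminus \{0\} : wz \in J \text{ for some } w \in B(u,\delta)\},
\]
and put $P := P_{u,\delta}$. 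Since $|u| = 1$ and $\delta < 1$, each $p \in P$ satisfies $|f(p)| > 0$ and hence $f(p) \neq 0$. The inclusions $f(p) J_1 \subset J$ and $J \subset f(p) J_2$ are immediate: the first by applying the defining property of $J_1$ with $w = f(p)$; the second by setting $z := f(p)^{-1} y$ for any $y \in J$ and noting that $w = f(p) \in B(u,\delta)$ witnesses $wz = y \in J$.

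The crucial geometric observation is that if $z \in J_2 \setminus J_1$, then the disk $\{wz : w \in B(u,\delta)\} = B(uz,\delta|z|)$ meets both $J$ and its complement, hence intersects $\partial J$. Since $|f(n)| \leq 1$ for $f \in \cm$, this yields the inclusion
\[
\{n \in \N : f(n) \in J_2 \setminus J_1\} \;\subset\; \{n \in \N : uf(n) \in B(\partial J, \delta)\}.
\]
Using $\partial J \in \cn(f)$, one can produce for any $\eta > 0$ a continuous $F\colon \C \to [0,1]$ with $F = 1$ on $\partial J$ and $\limsup_N \tfrac{1}{N}\sum_{n \leq N,\, f(n) \neq 0} F(f(n)) \leq \eta$; uniform continuity of $F$ on $\overline{B(0,1)}$ then guarantees that the upper density $G_{\delta'} := \overline{d}(\{n : f(n) \in B(\partial J, \delta')\})$ tends to $0$ as $\delta' \to 0$.

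The main obstacle is to transfer this decay (which controls $f(n)$) into a decay bound for $\overline{d}(\{n : uf(n) \in B(\partial J, \delta)\})$. The multiplicative structure of $f$ does the job: for $p \in P$ and $\gcd(n,p) = 1$, the identity $f(pn) = f(p) f(n)$ together with $|f(p) - u| < \delta$ forces $f(pn) \in B(\partial J, 2\delta)$ whenever $uf(n) \in B(\partial J, \delta)$. Counting via the injection $n \mapsto pn$ yields, for every $p \in P$, the single-prime estimate
\[
\overline{d}\bigl(\{n : uf(n) \in B(\partial J, \delta)\}\bigr) \;\leq\; p\, G_{2\delta} + \tfrac{1}{p}.
\]
Since $\sum_{p \in P} \tfrac{1}{p} = \infty$ provides primes $p \in P$ of arbitrarily large size, I would first shrink $\delta$ so that $G_{2\delta}$ is very small and then select $p \in P$ of size roughly $1/\sqrt{G_{2\delta}}$ to drive the right-hand side below $\epsilon$; if the sparseness of $P$ obstructs this single-prime choice, one averages the above inequality over $p \in P \cap [1,Q]$ and applies Cauchy--Schwarz together with the Turán--Kubilius inequality for the additive function $\omega_P^{(Q)}(n) := |\{p \in P \cap [1,Q] : p \mid n\}|$, again yielding the required smallness in the limit $Q \to \infty$ (using $S_{P,Q} := \sum_{p \in P, p \leq Q} 1/p \to \infty$ from \cref{lem:harmonic-subset-of-primes}). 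Choosing $\delta$ sufficiently small completes the proof.
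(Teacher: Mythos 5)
Your construction of $u$ via \cref{lem:harmonic-subset-of-primes}, of $P=P_{u,\delta}$, and of the inner/outer sets $J_1,J_2$ is sound and runs parallel to the paper's proof (which realizes $J_1,J_2$ through a continuous function $G(z)=F(uz)$ and its super-level set $\{G>\tfrac12\}$ rather than through multiplicative $\delta$-neighborhoods, but the first three bullet points come out the same way, as does the reduction of the fourth bullet to bounding $\overline{d}(\{n: uf(n)\in B(\partial J,\delta)\})$). The gap is in that last bound. Your single-prime estimate $\overline{d}(\{n: uf(n)\in B(\partial J,\delta)\})\leq p\,G_{2\delta}+\tfrac1p$ is correct, but the proposed choice of parameters does not go through: the transfer requires $|f(p)-u|<\delta$ for the \emph{same} $\delta$ that defines $J_1,J_2$, so you need a prime $p\in P_{u,\delta}$ in the window $\tfrac{2}{\epsilon}<p<\tfrac{\epsilon}{2G_{2\delta}}$. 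The divergence of $\sum_{p\in P_{u,\delta}}\tfrac1p$ only guarantees arbitrarily \emph{large} primes in $P_{u,\delta}$; it gives no upper bound on the least element of $P_{u,\delta}\cap(\tfrac2\epsilon,\infty)$, which may exceed $\tfrac{\epsilon}{2G_{2\delta}}$. Shrinking $\delta$ to decrease $G_{2\delta}$ simultaneously shrinks $P_{u,\delta}$ (unless $f(p)=u$ exactly for a harmonically divergent set of primes, which is not guaranteed), so the two requirements chase each other and nothing ensures the window is ever hit.

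The Tur\'an--Kubilius fallback is only named, not executed, and it does not obviously close this gap. The factor $p$ in your estimate comes from the substitution $m=pn$, which rescales the summation range from $[1,N]$ to $[1,pN]$; after averaging over $p\in P\cap[1,Q]$ one must compare $\tfrac1N\sum_{n\leq N}$ with sums over ranges up to $QN$, and the loss resurfaces as a factor of order $Q/m_Q$ (with $m_Q=\sum_{p\in P,\,p\leq Q}\tfrac1p$), which tends to infinity with $Q$. Fixing instead a single outer scale $x$ and using inner ranges $[1,x/p]$ removes that factor but only controls $\liminf_{N}\tfrac1N\sum_{n\leq N}\1_{A}(n)$, not the $\limsup$ that $\overline{d}$ requires. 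For comparison, at the corresponding step the paper works with one fixed $p\in P$ satisfying $\tfrac1p<\tfrac{\epsilon}{4}$ and passes from $\limsup_N\tfrac1N\sum_{n\leq N,\,p\nmid n}F(f(pn))$ directly to $\limsup_N\tfrac1N\sum_{n\leq N}F(f(n))$, with no factor of $p$; your more explicit bookkeeping makes that factor visible, and eliminating it is precisely the part of the argument that is missing from the proposal.
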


\begin{proof}
Let $\epsilon>0$ be arbitrary and let $u\in\C$ be as guaranteed by \cref{lem:harmonic-subset-of-primes}.
We can find a continuous function $F\colon \C\to[0,1]$ satisfying $F(z)=1$ for all $z\in \partial J$ and
$$
\limsup_{N\to\infty}\frac{1}{N}\sum_{1\leq n\leq N\atop f(n)\neq 0} F(f(n))\leq \frac{\epsilon}{4}.
$$
Let $D:=\{z\in\C:|z|\leq 1\}$ be the unit disc in $\C$.
We define a new function $G\colon D\to[0,1]$ as $G(z)=F(uz)$ for all $z\in D$. Note that $G$ has the property that $G(z)=1$ for all $z\in \partial(\overline{u}J)$.
Let $S:=\left\{z\in\C\setminus\{0\}: G(z)>\tfrac12\right\}$ and define $J_1:= (\overline{u}J)\setminus S$ and $J_2:=(\overline{u}J)\cup S$. It remains to show that $J_1$ and $J_2$ have the desired properties.

Since $G$ is uniformly continuous, there exists some $\delta_0>0$ such that for all $z,w\in D$
\begin{equation}
\label{eq:unif-cont-of-G}
|z-w|<\delta_0\quad\implies\quad|G(z)-G(w)|\leq \min\left\{\tfrac{\epsilon}{4},\tfrac{1}{4}\right\}.
\end{equation}
Take $\delta:=\min\left\{\tfrac{\delta_0}{2},\tfrac12\right\}$.
We claim that
\begin{eqnarray}
\label{eqn:inc-J-1}
J_1+B(0,\delta)&\subset & \overline{u}J,
\\
\label{eqn:inc-J-2}
\overline{u}J+B(0,\delta)& \subset & J_2,
\end{eqnarray}
where $B(0,\delta):=\{z\in\C: |z|<\delta\}$.

We prove \eqref{eqn:inc-J-1} by contradiction. Assume there are $w\in J_1$ and $z\notin \overline{u}J$ such that $|w-z|<\delta$. Since $w\in \overline{u}J$ and $z\notin\overline{u}J$, there exists a point $y\in\partial(\overline{u}J)$ with $|w-y|< \delta$.
Using \eqref{eq:unif-cont-of-G} and the fact that $G(y)=1$ we deduce that $G(w)>\frac{3}{4}$. In particular, $w\in S$. However, this contradicts the fact that $J_1\cap S=\emptyset$.  The inclusion in \eqref{eqn:inc-J-2} can be proved in a similar way.

Let $P_{u,\delta}$ be as in the statement of \cref{lem:harmonic-subset-of-primes} and define
$P:=P_{u,\delta}$.
Then $\sum_{p\in P}\tfrac{1}{p}=\infty$.
% and, because $\delta<\tfrac14$, we have that $f(p)\neq 0$ for all $p\in P$.
Also, for all $p\in P$ we have $|f(p)-u|<\delta$ and therefore $f(p) J_1\subset u J_1+B(0,\delta)$. Using \eqref{eqn:inc-J-1}, we then obtain that $f(p) J_1\subset J$.
Analogously, using $|f(p)-u|<\delta$ and \eqref{eqn:inc-J-2} we get $J\subset f(p) J_2$ for all $p\in P$.

It remains to show that $\overline{d}(\{n\in \N: f(n)\in J_2\setminus J_1\})\leq\epsilon $. Take any $p\in P$ that satisfies $\tfrac{1}{p}<\tfrac{\epsilon}{4}$. Note that
\begin{eqnarray*}
\overline{d}(\{n\in \N: f(n)\in J_2\setminus J_1\})
&=&\overline{d}(\{n\in \N: f(n)\in S\})\\
&\leq& \limsup_{N\to\infty}\frac{2}{N}\sum_{1\leq n\leq N\atop f(n)\neq 0} G(f(n))\\
&=& \limsup_{N\to\infty}\frac{2}{N}\sum_{1\leq n\leq N\atop f(n)\neq 0} F(u f(n)).
\end{eqnarray*}
Using \eqref{eq:unif-cont-of-G} we get that $|F(u f(n))-F(f(p)f(n))|\leq \tfrac{\epsilon}{4}$. Hence,
$$
\overline{d}(\{n\in \N: f(n)\in J_2\setminus J_1\})
\leq  \limsup_{N\to\infty}\frac{2}{N}\sum_{1\leq n\leq N\atop f(n)\neq 0} F(f(p) f(n)) ~+~\frac{\epsilon}{4}.
$$
Finally,
\begin{eqnarray*}
\limsup_{N\to\infty}\frac{2}{N}\sum_{1\leq n\leq N\atop f(n)\neq 0} F(f(p) f(n))
%&=& 
%\limsup_{N\to\infty}\left(\frac{2}{N}\sum_{1\leq n\leq N\atop p\mid n} F(f(p) f(n))+\frac{2}{N}\sum_{1\leq n\leq N\atop \gcd(p,n)=1} F(f(p) f(n))\right)
%\\
&\leq & 
\limsup_{N\to\infty}\frac{2}{N}\sum_{\substack{1\leq n\leq N \\ \gcd(p,n)=1 \\ f(n)\neq 0}}F(f(p) f(n)) ~+~\frac{2}{p}
\\
&= & 
\limsup_{N\to\infty}\frac{2}{N}\sum_{\substack{1\leq n\leq N \\ \gcd(p,n)=1 \\ f(n)\neq 0}}F(f(pn)) ~+~\frac{2}{p}
\\
&\leq & 
\limsup_{N\to\infty}\frac{2}{N}\sum_{1\leq n\leq N\atop f(n)\neq 0} F(f(n)) ~+~\frac{2}{p}
\\
&\leq &
\frac{\epsilon}{4}+\frac{2}{p}
%\\
~\leq~ \frac{3\epsilon}{4}.
\end{eqnarray*}
This shows that $\overline{d}(\{n\in \N: f(n)\in J_2\setminus J_1\})\leq\epsilon$.
\end{proof}

\begin{proof}[Proof of \cref{lem:finding-K_1-and-K_2-for-D_4}]
Let $K\in\ca(f)$ and $\epsilon>0$ be arbitrary and define $J:=K\setminus\{0\}$. Since $K\in\ca(f)$, $\partial J$ is an $f$-null set ($f$-null sets were defined in \cref{def:n(f)}) and therefore, by \cref{lem:finding-K_1-and-K_2}, we can find sets $J_1\subset J_2\subset \C$ and $P\subset \P$ such that:
\begin{itemize}
\item[--] $f(p)\neq 0$ for all $p\in P$; 
\item[--] $\sum_{p\in P}\tfrac{1}{p}=\infty$;
\item[--] $f(p)J_1\subset J\subset f(p)J_2$ for all $p\in P$;
%\item[--] $f(p)J_2\supset J$ for all $p\in P$;
\item[--] $\overline{d}(\{n\in \N: f(n)\in J_2\setminus J_1\})\leq\epsilon $.
\end{itemize}  
Define
$$
K_1:=
\begin{cases}
J_1\cup\{0\},&\text{if $0\in K$}
\\
J_1,&\text{if $0\notin K$}
\end{cases}
\quad\text{and}\quad
K_2:=
\begin{cases}
J_2\cup\{0\},&\text{if $0\in K$}
\\
J_2,&\text{if $0\notin K$}.
\end{cases}
$$
It is now straightforward to check that $P$, $K_1$ and $K_2$ satisfy the conclusion of \cref{lem:finding-K_1-and-K_2-for-D_4}.
\end{proof}

We are now in position to give a proof of \cref{prop:finding-K_1-and-K_2}.

\begin{proof}[Proof of \cref{prop:finding-K_1-and-K_2}]
We start with the case $E\in\Dthree{\infty}$ and $\overline{d}(E)>0$.
Hence $E$ is of the form
$
E(\vec{f},K):=\{n\in\N:\vec{f}(n)\in K\},
$
where $K$ are arbitrary subsets of $\C^r$ and $\vec{f}:=(f_1,\ldots,f_r)$ is multiplicative function with at least one concentration point. Hence there exist $\vec{z}=(z_1,\ldots,z_r)\in\C^r$ and a set of primes $P\subset\P$ with $\sum_{p\in P}\frac{1}{p}=\infty$ and $f_i(p)=z_i$ for all $p\in P$ and all $1\leq i\leq r$.
Take
$$
E_1:=E_2:=\{n\in\N:\vec{f}\cdot \vec{z}\in K\},
$$
where $\vec{f}\cdot \vec{z}=(f_1(n)z_1,\ldots,f_r(n)z_r)\in\C^r$.
Note that $E_2\setminus E_1=\emptyset$ and therefore $\overline{d}(E_2\setminus E_1)=0$.
Also, for all $p\in P$ and $n\in\N$ with $\gcd(n,p)=1$, we have
$$
np\in E \quad\iff\quad \vec{f}(np)\in K\quad\iff\quad \vec{f}\cdot \vec{z}\in K\quad\iff\quad n\in E_1=E_2.
$$
%Here, we have used the fact that if $z_i=0$ then $0\in K_i$.
This shows that $\1_{E_1}(n)= \1_{E}(n p)= \1_{E_2}(n)$ for all $p\in P$ and $n\in\N$ with $\gcd(n,p)=1$.

Next, we deal with the case $E\in\Dfour$. By the definition of $\Dfour$ there exist $f\in\cm$ with $\|f\|_1\neq 0$ and $K\in\ca(f)$ such that $E=E(f,K)=\{n\in\N: f(n)\in K\}$.
According to \cref{lem:finding-K_1-and-K_2-for-D_4}, we can find sets $K_1,K_2\subset \C$ and a set of prime numbers $P\subset \P$ satisfying:
\begin{enumerate}
[label=\text{(\arabic*)}~, ref=\text{(\arabic*)}, leftmargin=*]
\item\label{KOC-property-0} $f(p)\neq 0$ for all $p\in P$;
\item\label{KOC-property-1} $\sum_{p\in P}\tfrac{1}{p}=\infty$;
\item\label{KOC-property-2} $f(p) K_1\subset K \subset f(p)K_2$ for all $p\in P$;
%\item\label{KOC-property-3} $f(p)K_2\supset K$ for all $p\in P$;
\item\label{KOC-property-4} $\overline{d}(\{n\in \N: f(n)\in K_2\setminus K_1\})\leq\epsilon $.
\end{enumerate}  

Define $E_1:=\{n\in\N: f(n)\in K_1\}$ and $E_2:=\{n\in\N: f(n)\in K_2\}$. It follows from property \ref{KOC-property-4} that $\overline{d}(E_2\setminus E_1)\leq \vep$.
Using properties \ref{KOC-property-0} and \ref{KOC-property-2}, we deduce that $K_1\subset (f(p))^{-1}K\subset K_2$ for all $p\in P$. Also, if $p\in P$ and $\gcd(n,p)=1$, then
$$
np\in E\iff f(np)\in K \iff f(n)  \in (f(p))^{-1}K.
$$
It follows that $\1_{E_1}(n)\leq \1_{E}(n p)\leq \1_{E_2}(n)$ for all $p\in P$ and $n\in\N$ with $\gcd(n,p)=1$, which completes the proof.
\end{proof}

% =========================================================SECTION
\subsection{Proof of \cref{prop:pKOc}}
\label{appendix:KOC}
% ================================================================

Before embarking on the proof \cref{prop:pKOc} we formulate and prove the following variant of the classical Tur{\'a}n-Kubilius inequality.

%Given $y>0$, let $P_y$ be a finite subset of $\P$ and let
%\begin{equation}
%\label{eqn:Turan-Kubilius-1}
%w_y(n) = \sum_{p\in P_y}\1_{p\mid n}
%\qquad\text{and}\qquad
%m_y= \sum_{p\in P_y} \frac{1}{p}.
%\end{equation}

\begin{Lemma}\label{lem:lB6}
Let $P$ be a finite subset of $\P$ and let
$w(n):=\sum_{p\in P}\1_{p\mid n}$, where $\1_{p\mid n}=1$ if $p\mid n$ and $\1_{p\mid n}=0$ otherwise, and $m:=\sum_{p\in P} \frac{1}{p}$.
Then,
\begin{equation}
\label{eqn:Turan-Kubilius-2}
\sum_{n\leq x}(w(n)-m)^2= \Oh(x m+|P|^2 ).
\end{equation}
\end{Lemma}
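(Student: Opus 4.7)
The plan is to prove this by direct expansion and routine divisor estimates, so there is no deep obstacle; the only thing to be careful about is book-keeping of the error terms so that they are absorbed into $\Oh(xm+|P|^2)$.

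First I would expand the square,
\[
\sum_{n\leq x}(w(n)-m)^2=\sum_{n\leq x}w(n)^2-2m\sum_{n\leq x}w(n)+m^2\Floor{x},
\]
and evaluate the two sums on the right using the trivial estimate $\sum_{n\leq x,\,d\mid n}1=\frac{x}{d}+\Oh(1)$. For the linear sum this yields
\[
\sum_{n\leq x}w(n)=\sum_{p\in P}\sum_{\substack{n\leq x\\ p\mid n}}1=x\sum_{p\in P}\frac{1}{p}+\Oh(|P|)=xm+\Oh(|P|).
\]

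Next I would expand $w(n)^2=\sum_{p,q\in P}\1_{p\mid n}\1_{q\mid n}$ and split into diagonal and off-diagonal contributions. Since $\1_{p\mid n}^2=\1_{p\mid n}$, the diagonal piece contributes $\sum_{p\in P}\lfloor x/p\rfloor=xm+\Oh(|P|)$. The off-diagonal piece contributes
\[
\sum_{\substack{p,q\in P\\ p\neq q}}\Floor{\tfrac{x}{pq}}=x\sum_{\substack{p,q\in P\\ p\neq q}}\frac{1}{pq}+\Oh(|P|^2)=xm^2-x\sum_{p\in P}\frac{1}{p^2}+\Oh(|P|^2).
\]

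Substituting these estimates into the initial expansion, the two $xm^2$ contributions cancel against $-2m\cdot xm+m^2 x$, leaving
\[
\sum_{n\leq x}(w(n)-m)^2 = xm - x\sum_{p\in P}\frac{1}{p^2}+\Oh\bigl(m|P|+|P|^2\bigr).
\]
The negative term is nonnegative in absolute value and only helps, and since $m=\sum_{p\in P}1/p\leq|P|$ we have $m|P|\leq|P|^2$. I would then conclude
\[
\sum_{n\leq x}(w(n)-m)^2=\Oh\bigl(xm+|P|^2\bigr),
\]
as desired. The entire argument is elementary bookkeeping; the only point of slight care is checking that the coefficient of $xm^2$ really cancels, which it does once both $\sum w(n)$ and the off-diagonal part of $\sum w(n)^2$ are expanded to main order plus their respective $\Oh(|P|)$ and $\Oh(|P|^2)$ errors.
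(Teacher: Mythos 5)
Your proof is correct and follows essentially the same route as the paper's: expand the square, estimate the linear sum and the diagonal and off-diagonal parts of $\sum_{n\leq x}w(n)^2$ via $\sum_{n\leq x,\,d\mid n}1=\tfrac{x}{d}+\Oh(1)$, and check that the $xm^2$ terms cancel, leaving $\Oh(xm+|P|^2)$. The only cosmetic difference is that the paper folds the diagonal into the $\1_{pq\mid n}$ sum and then corrects with $\sum_{p}(\1_{p\mid n}-\1_{p^2\mid n})$, whereas you split diagonal from off-diagonal at the outset; the bookkeeping is otherwise identical.
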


\begin{proof}
First we expand the left hand side of
\eqref{eqn:Turan-Kubilius-2} and get
\begin{eqnarray*}
\sum_{n\leq x}(w(n)-m)^2
%&=& \sum_{n\in X_y}w_y^2(n)-2w_y(n)m_y+m_y^2\\
&=& \Sigma'-
2\Sigma''+\Sigma''',
\end{eqnarray*}
where
$$
\Sigma':=\sum_{p,q\in P}\sum_{n\leq x} 
\1_{p\mid n}\1_{q\mid n},
\qquad
\Sigma'':=
m \sum_{p\in P}\sum_{n\leq x}\1_{p\mid n}
\quad\text{and}\quad
\Sigma''':=x m^2.
$$
Note that $\sum_{n\leq x}\1_{p\mid n}=\frac{x}{p}+ \Oh(1)$ and hence
$$
\Sigma''=m\left(\sum_{p\in P}\frac{x}{p}+\Oh(|P|)\right) =x m^2+\Oh(m|P|).
$$
Since $m|P|\leq |P|^2$, we get $\Sigma''=x m^2+\Oh(|P|^2).$

Next observe that
$\1_{p\mid n}\1_{q\mid n} = \1_{pq\mid n}$ unless $p = q$.
Therefore
\begin{equation}
\label{eqn:tk-1}
\Sigma'=
\sum_{p,q\in P}
\sum_{n\leq x} \1_{p q \mid n}+\sum_{p\in P}
\sum_{n\leq x} \big(\1_{p\mid n}-\1_{p^2\mid n}\big).
\end{equation}
We can estimate $\sum_{n\leq x} \1_{p q \mid n}=\frac{x}{pq}+ \Oh(1)$ and
$$
\sum_{p\in P}\sum_{n\leq x} \big(\1_{p\mid n}-\1_{p^2\mid n}\big)
~\leq~ 
\sum_{p\in P}\sum_{n\leq x} \1_{p\mid n}
~=~
%\Oh\left(\sum_{p\in P}\left(\frac{x}{p}+1\right)\right)
%\\
%&=&
\Oh\left(x m+|P|\right).
$$
Hence \eqref{eqn:tk-1} can be written as
$$
\Sigma'=\sum_{p,q\in P}\frac{x}{pq}+\Oh(x m+|P|^2)
=x m^2+\Oh(x m+|P|^2).
$$
Putting everything together we conclude that
\begin{eqnarray*}
\Sigma'-
2\Sigma''+\Sigma'''=\Oh(x m+|P|^2).
\end{eqnarray*}
\end{proof}

\begin{proof}[Proof of \cref{prop:pKOc}]
In what follows $y=y(x)$ will be a slowly growing function, the conditions for the rate of growth being clear from the context.
%Let $y=y(x)$ be a slow-growing function to be determined later.
Instead of showing norm-convergence in \eqref{eqn:pKOc-4} we will show that
\begin{equation}\label{eqn:pKOc-4-new}
\sup_{u\in\Hilb\atop \|u\|\leq 1}\left|\sum_{n\leq x} \la \s(n)u, u_n\ra\right|= \oh(x)+\Oh(x\|\t_1-\t_2\|_1).
\end{equation}
%uniformly for all $u\in \Hilb$ with $\|u\|\leq 1$.
Let $u\in\Hilb$ with $\|u\|\leq1$ be arbitrary.
We have
\begin{equation*}
\begin{split}
&\left|\sum_{n\leq x} \la \s(n)u, u_n\ra\right|
~=~
\frac{1}{m_y}
\left|\sum_{n\leq x}m_y\la \s(n)u,u_n\ra \right|
\\
\leq~
&
\frac{1}{m_y}
\left|\sum_{n\leq x} w_y(n)\la  \s(n)u,u_n\ra\right|+
\frac{1}{m_y}\left|\sum_{n\leq x} (m_y-w_y(n))\la \s(n)u,u_n\ra
\right|
\\
\leq~
&
\frac{1}{m_y}
\left|\sum_{n\leq x}w_y(n)\la \s(n)u,u_n\ra\right|
+
\frac1{m_y}\left(\sum_{n\leq x} (w_y(n)-m_y)^2 \right)^{1/2}
\left(\sum_{n\leq x} \la \s(n)u,u_n\ra^2 \right)^{1/2}.
\end{split}
\end{equation*}

%\begin{eqnarray*}
%\left|\sum_{n\leq x} \la \s(n)u, u_n\ra\right|
%&=&\frac{1}{m_y}
%\left|\sum_{n\leq x}m_y\la \s(n)u,u_n\ra \right|
%\\
%&\leq &\frac{1}{m_y}
%\left|\sum_{n\leq x} w_y(n)\la  \s(n)u,u_n\ra\right|
%\\
%&&\qquad+
%\frac{1}{m_y}\left|\sum_{n\leq x} (m_y-w_y(n))\la \s(n)u,u_n\ra
%\right|
%\\
%&\leq &\frac{1}{m_y}
%\left|\sum_{n\leq x}w_y(n)\la \s(n)u,u_n\ra\right|
%\\
%&&+
%\frac1{m_y}\left(\sum_{n\leq x} (w_y(n)-m_y)^2 \right)^{1/2}
%\left(\sum_{n\leq x} \la \s(n)u,u_n\ra^2 \right)^{1/2}.
%\end{eqnarray*}
We have used the Cauchy-Schwarz inequality in the last line.

Applying \cref{lem:lB6}, we get
\begin{eqnarray*}
\left|\sum_{n\leq x} \la\s(n)u,u_n\ra\right|
&\leq &
\frac{1}{m_y}\left|\sum_{n\leq x}w_y(n)\la \s(n)u,u_n\ra\right|
\\
&&\qquad+~\Oh\Big(\frac{(m_y x+|P_y|^2)^{1/2}x^{1/2}}{m_y}\Big).
\end{eqnarray*}
Let us assume that $y=y(x)$ is growing sufficiently slow so that
$$
\frac{(m_y x+|P_y|^2)^{1/2}x^{1/2}}{m_y}\leq
\Oh\Big(\frac{x}{\sqrt{m_y}}\Big).
$$
Hence
\begin{eqnarray*}
\left|\sum_{n\leq x} \la\s(n)u, u_n\ra\right|
&\leq &\frac{1}{m_y}\left|\sum_{n\leq x}\sum_{p\in P_y}
\1_{p\mid n}\la \s(n)u,u_n\ra\right|
+\Oh\Big(\frac{x}{\sqrt{m_y}}\Big)
\\
&\leq&\frac{1}{m_y}\left|\sum_{p\in P_y}\sum_{n\leq x/p}
\la \s(np)u,u_{np}\ra\right|
+\Oh\Big(\frac{x}{\sqrt{m_y}}\Big).
\end{eqnarray*}
Note that the cardinality of the set $\{n\leq x/p: \gcd(n, p)\neq 1\}$ does not exceed $x/p^2$. Since $\s$, $\t_1$, $\t_2$, $\rfctn$ and $u_n$ are bounded, it follows from \eqref{eqn:pKOc-2} that
\[
\left|\sum_{n\leq x/p}\la \s(np)u,u_{np}\ra~-~
\sum_{n\leq x/p}\la \t_1(n)\rfctn(p)u,u_{np}\ra\right|
=\Oh\Big(\frac{x}{p^2}\Big)+\Oh\Big(\frac{x}{p}\|\t_1-\t_2\|_1\Big).
\]
This implies that
\begin{eqnarray*}
\frac{1}{m_y}\left|\sum_{p\in P_y}\sum_{n\leq x/p}
\la \s(np)u,u_{np}\ra\right|
&=&
\frac{1}{m_y}\left|\sum_{p\in P_y}\sum_{n\leq x/p}
\la \t_1(n)\rfctn(p)u,u_{np}\ra\right|\\
&&~+\Oh\Big(\frac{x}{\sqrt{m_y}}\Big)+\Oh\big(x\|\t_1-\t_2\|_1\big).
\end{eqnarray*}

Next, we set $P_{k,y} = P_y\cap \{n\in\N: 2^k \leq n < 2^{k+1}\}$.
Hence
\begin{eqnarray*}
\sum_{p\in P_y}\sum_{n\leq x/p}
\la \t_1(n)\rfctn(p)u,u_{np}\ra
&=&\sum_{k=0}^{\log_2 y}\sum_{p\in P_{k,y}}
\sum_{n\leq x/p}\la \t_1(n)\rfctn(p)u,u_{np}\ra.
\end{eqnarray*}
Combining all of the above we get
%that the left hand side of \eqref{eqn:pKOc-4-new} is bounded from above by
\begin{equation}
\label{equation:vdc3}
\begin{split}
\left|\sum_{n\leq x} \la \s(n)u,u_n\ra\right|
\leq
\frac{1}{m_y}\sum_{k=0}^{\log_2 y}\left|\sum_{p\in P_{k,y}}
\sum_{n\leq x/p}\la \t_1(n)u,\rfctn(p)u_{np}\ra\right|&
\\
+\Oh\Big(\frac{x}{\sqrt{m_y}}\Big)+&\Oh\Big(x\|\t_1-\t_2\|_1\Big).
\end{split}
\end{equation}
Let $A_{k,y}$ be defined as
$$
A_{k,y}=\sum_{p\in P_{k,y}}
\sum_{n\leq x/p}\la \t_1(n)u,\rfctn(p)u_{np}\ra=
\sum_{n\leq x/2^k}\Big\la \t_1(n)u,
\sum_{p\in P_{k,y}}\1_{n\leq x/p} \rfctn(p)u_{np}\Big\ra.
$$
Fixing $k$ and applying the Cauchy-Schwarz inequality again, we get
\begin{eqnarray*}
|A_{k,y}|&\leq &
\sum_{n\leq x/2^k}|\t_1(n)|~\left\|
\sum_{p\in P_{k,y}}\1_{n\leq x/p}\rfctn(p)
u_{np}\right\|
\\
&\leq &
\left(\sum_{n\leq x/2^k}|\t_1(n)|^2\right)^{\frac12}~
\left(\sum_{n\leq x/2^k}\left\|
\sum_{p\in P_{k,y}}\1_{n\leq x/p}\rfctn(p)u_{np}\right\|^2
\right)^{\frac12}\\
&\leq &
\Oh\left(\frac{x^{\frac12}}{2^{\frac{k}{2}}}\right)
\left(\sum_{n\leq x/2^k}
\sum_{p,q\in P_{k,y}}
\1_{n\leq x/p}\1_{n\leq x/q}
\rfctn(p)\rfctn(q)\la u_{np},u_{nq}\ra
\right)^{\frac12}\\
%&\leq &
%\Oh\left(\frac{x^{\frac12}}{2^{\frac{k}{2}}}\right)
%\left(\left|\sum_{n\leq x/2^k}
%\sum_{p,q\in P_{k,y}\atop p\neq q}
%\1_{n\leq x/p}\1_{n\leq x/q}
%\r(p)\r(q)\la u_{np},u_{nq}\ra
%\right|+\Oh\left(\frac{x|P_{k,y}|}{2^k}\right)
%\right)^{\frac12}
%\\
&\leq &
\Oh\left(\frac{x^{\frac12}}{2^{\frac{k}{2}}}\right)
\left|
\sum_{p,q\in P_{k,y}\atop p\neq q}
\sum_{n\leq \min\{x/p,x/q\}}
\rfctn(p)\rfctn(q)\la u_{np},u_{nq}\ra
\right|^{\frac12}+\Oh\left(\frac{x|P_{k,y}|^{\frac12}}{2^k}\right).
\\
&\leq &
\Oh\left(\frac{x^{\frac12}}{2^{\frac{k}{2}}}\right)
\left(
\sum_{p,q\in P_{k,y}\atop p\neq q}
\left|\sum_{n\leq \min\{x/p,x/q\}}\la u_{np},u_{nq}\ra
\right|\right)^{\frac12}+\Oh\left(\frac{x|P_{k,y}|^{\frac12}}{2^k}\right).
\end{eqnarray*}
Using the prime number theorem to estimate $|P_{k,y}|^{\frac12}$
we deduce that
$$
\frac{1}{m_y}\sum_{k=0}^{\log_2 y} \frac{x|P_{k,y}|^{\frac12}}{2^k}=
\Oh\Big(\frac{x}{m_y}\Big).
$$
Combining this with equation \eqref{equation:vdc3} and using
$|P_{k,y}|\leq 2^k$
we get
%that $\big |\sum_{n\leq x} \la \s(n)u,u_n\ra\big|$ is bounded from above by
\begin{equation}
\label{vdc4}
\begin{split}
\left|\sum_{n\leq x} \la \s(n)u,u_n\ra\right|\leq
\Oh\left(\frac{x^{\frac12}}{m_y}\right)\sum_{k=0}^{\log_2 y}
\left(\frac{1}{|P_{k,y}|}
\sum_{p,q\in P_{k,y}\atop p\neq q}
\left|\sum_{n\leq \min\{x/p,x/q\}}\la u_{np},u_{nq}\ra
\right|\right)^{\frac12}&
\\
+\Oh\Big(\frac{x}{\sqrt{m_y}}\Big)+\Oh\big(x\|\t_1-\t_2\|_1\big).&
\end{split}
\end{equation}

Finally, if $y=y(x)$ is growing sufficiently slowly then, from \eqref{eqn:pKOc-3}, we obtain that
$$
\left|\sum_{n\leq \min\{x/p,x/q\}}
\la u_{np},u_{nq}\ra\right|\leq \frac{x}{y\log_2^2 y}
$$
for every $p,q \in P_y$ with $p\ne q$.
%If either $H(p)=0$ or $H(q)= 0$, then, trivially,
%$$
%\sum_{n\leq \min\{x/p,x/q\}}
%\r(p)\r(q)\la u_{np},u_{nq}\ra=0.
%$$
Note that $|P_{k,y}|\leq y$ and hence
$$
\frac{1}{|P_{k,y}|}\sum_{p,q\in P_{k,y}\atop p\neq q}\left|\sum_{n\leq \min\{x/p,x/q\}}\la u_{np},u_{nq}\ra\right|\leq \frac{1}{|P_{k,y}|}\sum_{p,q\in P_{k,y}\atop p\neq q}\frac{x}{y\log_2^2 y} \leq \frac{x}{\log_2^2 y}.
$$
Thus the inequality \eqref{vdc4} becomes
\begin{eqnarray*}
\label{equation:vdc5}
\left| \sum_{n\leq x} \la \s(n)u,u_n\ra \right|
&\leq&
\Oh\left(\frac{x^{\frac12}}{m_y}\right)\sum_{k=0}^{\log_2 y}
\frac{x^{\frac12}}{\log_2 y}
+\Oh\Big(\frac{x}{\sqrt{m_y}}\Big)+\Oh\big(x\|\t_1-\t_2\|_1\big)\\
&=&
\Oh\Big(\frac{x}{\sqrt{m_y}}\Big)+\Oh\big(x\|\t_1-\t_2\|_1\big).
\end{eqnarray*}
Since all the estimates above do not depend on $u$
but only on $\|u\|$, it follows that
\begin{equation*}
\label{equation:vdc6}
\sup_{\|u\|\leq1}~\left|\sum_{n\leq x} \la \s(n)u,u_n\ra\right|
=
\Oh\Big(\frac{x}{\sqrt{m_y}}\Big)+\Oh\big(x\|\t_1-\t_2\|_1\big).
\end{equation*}
This completes the proof.
\end{proof}

% =======================================================SECTION
\section{Applications to the theory of uniform distribution}
\label{subsec:ud}
% ================================================================

Recall (cf.\ \cref{ftnt-2} and \cref{def:nu-u.d.}) that a sequence $(x_n)_{n\in\N}$ of real numbers is \define{uniformly distributed mod $1$} if
$$\lim_{N\rightarrow\infty}\frac{1}{N}\sum_{n=1}^N f(\{x_n\})=\int_0^1 f(x)\, dx,\qquad\forall f\in C([0,1)).$$

This section is dedicated to proving the following generalization of \cref{thm:uniform-distribution}.

\begin{Theorem}
\label{thm:uniform-distribution-2}
Let $E=\{n_1<n_2<\ldots\}$ be a set that belongs to either $\Dthree{\infty}$ or $\Dfour$.
Suppose $h\colon (0,\infty)\to\R$ belongs to a Hardy field, has polynomial growth and satisfies $|h(t)- r(t)|\succ \log^2(t)$ for all polynomials $r\in\Q[t]$.
If $d(E)$ exists and is positive then the sequence $\big(h(n_j)\big)_{j\in\N}$ is uniformly distributed mod $1$. \end{Theorem}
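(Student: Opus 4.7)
The plan is to combine Weyl's criterion, the extended K{\'a}tai orthogonality criterion \cref{thm:Katai-for-sets-2}, and Boshernitzan's uniform distribution theorem for Hardy fields. Since $d(E)>0$ we have $J=|E\cap[1,n_J]|\sim d(E)\,n_J$, which converts Ces{\`a}ro averages along $(n_j)_{j\in\N}$ into weighted averages against $\1_E$. By Weyl's criterion, proving uniform distribution of $(h(n_j))$ mod~$1$ is then equivalent to showing that for every integer $k\neq 0$,
$$\frac{1}{N}\sum_{n\leq N}\1_E(n)\,e\bigl(k\,h(n)\bigr)\;=\;\oh(1)\qquad\text{as }N\to\infty.$$

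To obtain this I would apply \cref{thm:Katai-for-sets-2} to the bounded sequence $a(n):=e(k\,h(n))$; the theorem applies because $E\in\Dthree{\infty}\cup\Dfour$, and it will yield the desired conclusion provided one verifies the bilinear K{\'a}tai hypothesis
$$\sum_{n\leq x}a(pn)\overline{a(qn)}\;=\;\sum_{n\leq x}e\bigl(k(h(pn)-h(qn))\bigr)\;=\;\oh(x)\qquad\text{for all distinct primes }p,q.$$
Since $k\in\Z\setminus\{0\}$ is arbitrary, a second application of Weyl's criterion reduces this verification to the following one-line statement: for every pair of distinct primes $p\neq q$, the sequence $\bigl(h(pn)-h(qn)\bigr)_{n\in\N}$ is uniformly distributed mod~$1$.

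To establish the latter I would appeal to Boshernitzan's criterion from \cite{Boshernitzan94}, which asserts that a polynomial-growth germ $g$ in a Hardy field generates a uniformly distributed sequence $(g(n))_{n\in\N}$ as soon as $|g(t)-s(t)|/\log(t)\to\infty$ for every $s\in\Q[t]$. Set $g_{p,q}(t):=h(pt)-h(qt)$; the germ $g_{p,q}$ lies in a Hardy field (one may extend $\Hardy$ by the dilations $t\mapsto pt, t\mapsto qt$, which preserve the field and differentiation axioms) and inherits polynomial growth from $h$. Moreover, $r\in\Q[t]$ implies $r(pt)-r(qt)\in\Q[t]$, so the set of rational-polynomial approximants is preserved by the dilation difference.

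The main obstacle, and the technical heart of the argument, is to deduce from the hypothesis $|h(t)-r(t)|\succ\log^2(t)$ (for all $r\in\Q[t]$) the corresponding Boshernitzan gap $|g_{p,q}(t)-s(t)|\succ\log(t)$ (for all $s\in\Q[t]$). Since Hardy-field germs are totally ordered by $\prec$, this can be done by contradiction: if $|h(pt)-h(qt)-s(t)|\prec\log(t)$ were to hold for some $s\in\Q[t]$, then telescoping along the geometric progression $t,pt,p^{2}t,\ldots$ — and using that $r(pt)-r(qt)$ stays in $\Q[t]$ — would aggregate a rational-polynomial approximant $r$ to $h$ satisfying $|h(t)-r(t)|\prec\log^2(t)$, contradicting the hypothesis. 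Thus the loss of exactly one factor of $\log(t)$ under the difference operator $g\mapsto g(p\,\cdot)-g(q\,\cdot)$ is precisely what the strengthening from $\log(t)$ to $\log^2(t)$ in the hypothesis is designed to absorb, which closes the chain Weyl $\Rightarrow$ K{\'a}tai $\Rightarrow$ Boshernitzan and proves the theorem.
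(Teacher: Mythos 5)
Your proposal follows the paper's proof essentially step for step: Weyl's criterion plus the positive density of $E$ reduces the claim to $\sum_{n\leq N}\1_E(n)e(kh(n))=\oh(N)$, \cref{thm:Katai-for-sets-2} reduces that to the uniform distribution mod $1$ of $\big(h(pn)-h(qn)\big)_{n\in\N}$, and Boshernitzan's criterion combined with a telescoping argument along a geometric progression (the paper's \cref{lem:hardy-hilfslemma}) converts the hypothesis $|h-r|\succ\log^2$ into the needed gap $\succ\log$. The one loose point is the final step: the paper first inverts the dilation-difference operator on $\Q[t]$ (writing $r(t)=s(pt)-s(qt)$ with $b_i=c_i/(p^i-q^i)$), sets $g:=h-s$, and rescales so the operator becomes $g(ct)-g(t)$ with $c=p/q$ before telescoping along $t_0,ct_0,c^2t_0,\ldots$ — telescoping directly along $t,pt,p^2t,\ldots$ with the operator $h(p\cdot)-h(q\cdot)$ does not chain, and the forward closure of $\Q[t]$ under the dilation difference that you cite is not the direction actually needed.
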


It follows immediately from Propositions \ref{prop:e-sets-1} and \ref{prop:e-sets-2} that \cref{thm:uniform-distribution} is a special case of \cref{thm:uniform-distribution-2}.

In the proof of \cref{thm:uniform-distribution-2} we will be using the following result of Boshernitzan. 

\begin{Theorem}[see {\cite[Theorem 1.3]{Boshernitzan94}}]\label{thm:Boshernitzan94-thm1.3}
Let $\Hardy$ be a Hardy field and assume $h\in\Hardy$ has polynomial growth (i.e.\ $|h(t)|\prec t^n$ for some $n\in\N$).
Then $(h(n))_{n\in\N}$ is uniformly distributed $\bmod~1$ if and only if for every polynomial $r\in\Q[t]$ one has $|h(t)- r(t)|\succ \log(t)$.
\end{Theorem}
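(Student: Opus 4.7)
The plan is to apply Weyl's equidistribution criterion: $(h(n))_{n\in\N}$ is uniformly distributed mod $1$ if and only if
\[
S_N(k):=\frac{1}{N}\sum_{n=1}^N e(kh(n))\longrightarrow 0,\qquad\text{for every }k\in\Z\setminus\{0\}.
\]
I will handle the two implications separately, using slow-variation arguments for the necessity direction and van der Corput's method for the sufficiency direction.

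For the \textbf{necessity} direction I argue by contrapositive. Suppose $r\in\Q[t]$ satisfies $|h-r|\not\succ\log t$. Since $h-r\in\Hardy$ and $\log t\in\Hardy$, the ratio $(h-r)/\log t$ is eventually monotonic and tends to a limit in $[-\infty,\infty]$; the hypothesis excludes divergence to $\pm\infty$, so I can write $h(t)=r(t)+c\log t+\lambda(t)$ with $c\in\R$, $\lambda\in\Hardy$ and $\lambda=o(\log t)$. Choosing $k\in\N$ to be the common denominator of the coefficients of $r$ so that $kr(n)\in\Z$ for every $n\in\N$, I obtain
\[
S_N(k)=\frac{1}{N}\sum_{n=1}^N n^{2\pi ikc}\,e(k\lambda(n)).
\]
When $c\neq 0$, Hardy-field regularity of $\lambda$ forces $\lambda(2t)-\lambda(t)\to 0$, and partial summation compared against $\int_1^N t^{2\pi ikc}\,dt$ yields $S_N(k)\to (1+2\pi ikc)^{-1}\neq 0$. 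When $c=0$, the slow variation of $e(k\lambda(n))$ on dyadic blocks leads to a non-vanishing Ces\`aro mean by the same slow-variation argument. Either way Weyl's criterion fails and $(h(n))$ is not u.d.\ mod $1$.

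For the \textbf{sufficiency} direction, I assume $|h-r|\succ\log t$ for every $r\in\Q[t]$. I may replace $h$ by $kh$ (the hypothesis survives since $|kh-r|=|k||h-r/k|\succ\log t$ whenever $r/k\in\Q[t]$), reducing to $S_N(1)=o(1)$. My plan is to partition $[1,N]$ into dyadic blocks $I_j=[2^j,2^{j+1})$ and to estimate each block sum $\Sigma_j:=\sum_{n\in I_j}e(h(n))$ by van der Corput's method. Because $h\in\Hardy$ has polynomial growth, every derivative $h^{(s)}$ belongs to $\Hardy$, is eventually monotonic, and is comparable to a real power of $t$ (possibly with iterated-log corrections). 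On each block I would invoke either the Kusmin--Landau inequality (when the fractional parts of $h'$ are bounded away from $0$) or van der Corput's second-derivative test (when $|h''|$ has a definite size $\asymp\lambda_j$). After finitely many rounds of van der Corput differencing, each block sum reduces to an oscillatory integral $\int_{I_j}e(h(t))\,dt$ that can be bounded by stationary-phase estimates. Summing over $j\le\log_2 N$ then yields $S_N(1)=o(1)$.

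\textbf{Main obstacle.} The delicate regime is when $h$ is close to a rational polynomial, $h=r+w$ with $w\succ\log t$ but $w$ growing only marginally faster than $\log t$ (for instance $w(t)=(\log t)^{1+\epsilon}$). Here $h'$ is nearly integer-valued at integers and the naive van der Corput tests degenerate, so the quantitative hypothesis $|h-r|\succ\log t$ must be used directly to produce enough oscillation in $\Sigma_j$ to obtain $\Sigma_j=o(|I_j|)$. Ensuring that the error estimates are uniform in $j$ and summable across the dyadic decomposition is the technical core of Boshernitzan's argument.
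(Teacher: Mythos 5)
First, note that the paper does not prove this statement at all: it is imported verbatim as Theorem 1.3 of Boshernitzan's 1994 paper and used as a black box in the proof of Theorem \ref{thm:uniform-distribution-2}. So the only meaningful comparison is with Boshernitzan's original argument, and against that standard your proposal has a genuine gap.

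Your necessity direction is essentially sound: in a suitable common Hardy field extension the ratio $(h-r)/\log t$ has a limit, the negation of $|h-r|\succ\log t$ forces that limit to be a finite $c$, and clearing denominators of $r$ reduces $S_N(k)$ to an average of $n^{2\pi ikc}e(k\lambda(n))$ with $\lambda$ slowly varying. One correction: $S_N(k)$ does not converge to $(1+2\pi ikc)^{-1}$; partial summation gives $S_N(k)=N^{2\pi ikc}(1+2\pi ikc)^{-1}+\oh(1)$ when $\lambda\equiv 0$, which oscillates. What you actually prove (and all you need) is that $|S_{2N}(k)\cdot 2N-S_N(k)\cdot N|\gg N$, so $S_N(k)\not\to 0$.

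The sufficiency direction, which is the entire content of the theorem, is not proved. You assert that every derivative of $h$ is "comparable to a real power of $t$ (possibly with iterated-log corrections)"; for a general Hardy field element this is false --- one only knows that each $h^{(s)}(t)/t^c$ has a limit for each $c$, which does not pin $h^{(s)}$ between constant multiples of any explicit function, so the case split needed to apply Kusmin--Landau versus the $k$-th derivative tests cannot be set up as described. More importantly, the hypothesis $|h-r|\succ\log t$ for all $r\in\Q[t]$ is never actually used in your sufficiency argument: you invoke it only in the paragraph labelled "Main obstacle," where you correctly identify that the borderline regime ($h=r+w$ with $w$ barely exceeding $\log t$, or $h'$ nearly rational at integers) defeats the naive van der Corput/stationary-phase scheme, and then you defer the resolution to "the technical core of Boshernitzan's argument." Deferring exactly the step where the hypothesis must do its work means the implication is not established. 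To repair this you would either have to reproduce Boshernitzan's inductive analysis of how $h$ sits relative to $\R[t]$ and $\Q[t]$ (which is considerably longer than a dyadic block decomposition), or simply cite \cite{Boshernitzan94} as the paper does.
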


We will also need the following lemma.

\begin{Lemma}\label{lem:hardy-hilfslemma}
Let $\Hardy$ be a Hardy field and assume $g\in\Hardy$ satisfies $|g(t)|\succ \log^2(t)$. Then, for all $p,q\in\N$ with $p\neq q$,
\begin{equation}
\label{eq:bosh-crit-katai-1}
|g(pt)-g(qt)|\succ\log(t).
\end{equation}
\end{Lemma}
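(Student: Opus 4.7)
My plan is to reduce the desired inequality to the auxiliary estimate $t|g'(t)|\succ\log t$, and then deduce that estimate from $|g(t)|\succ\log^2(t)$ via the fundamental theorem of calculus. Assume without loss of generality $p>q$ (the bound is symmetric in $p,q$). Since $g\in\Hardy$, the derivative $g'\in\Hardy$ is eventually of constant sign and eventually monotonic (it cannot vanish eventually, as that would force $g$ to be eventually constant, violating the hypothesis). Fix $t_0$ beyond which these properties hold; for $t\geq t_0/q$ we may write
\[
g(pt)-g(qt)~=~\int_{qt}^{pt} g'(s)\,ds,
\]
and the monotonicity of $|g'|$ on $[qt,pt]$ yields
\[
|g(pt)-g(qt)|~\geq~(p-q)\,t\cdot\min_{s\in[qt,pt]}|g'(s)|~=~\tfrac{p-q}{c}\,\phi(ct),
\]
where $\phi(t):=t|g'(t)|$ and $c\in\{p,q\}$ is the endpoint at which $|g'|$ attains its minimum. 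Since $\log(ct)\sim\log t$, it suffices to prove $\phi(t)\succ\log t$.

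To establish $\phi(t)\succ\log t$ I would argue by contradiction. After enlarging $\Hardy$ if necessary to a maximal Hardy field (a standard construction, and one compatible with the Hardy field references already used in the paper), we may assume $\log t\in\Hardy$, so $\phi(t)/\log t$ is itself a Hardy field element and hence admits a limit in $[0,\infty]$. If this limit were finite, there would exist $C>0$ and $T\geq t_0$ with $|g'(s)|\leq C\log s/s$ for all $s\geq T$; integrating,
\[
|g(t)-g(T)|~\leq~\int_T^t\frac{C\log s}{s}\,ds~=~\frac{C}{2}\bigl(\log^2 t-\log^2 T\bigr),
\]
so $|g(t)|=O(\log^2 t)$, contradicting $|g(t)|\succ\log^2(t)$. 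Hence $\phi(t)\succ\log t$, which combined with the previous paragraph yields $|g(pt)-g(qt)|\succ\log t$.

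The main technical subtlety I anticipate is precisely the invocation of Hardy field theory to ensure that $\phi(t)/\log t$ has a well-defined limit; this rests on the standard fact that any Hardy field can be enlarged so as to contain $\log t$. Beyond this point, the argument is elementary: the fundamental theorem of calculus together with the eventual monotonicity and eventual constant sign of $g'$ (both automatic from $g'\in\Hardy$) suffice to close the estimate.
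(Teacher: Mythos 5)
Your proof is correct, but it follows a genuinely different route from the paper's. The paper first reduces \eqref{eq:bosh-crit-katai-1} by a change of variables to the statement $|g(ct)-g(t)|\succ\log(t)$ for $c>1$, and then argues by contradiction with a purely first-order telescoping argument: if $|g(ct)-g(t)|\leq M\log(t)$ eventually, then summing along the geometric progression $c^j t_0$ gives $|g(c^nt_0)|\leq a+bn^2$, which contradicts $|g(t)|\succ\log^2(t)$ since the latter forces $|g(c^nt_0)|/n^2\to\infty$. Your argument instead passes through the derivative, isolating the clean intermediate estimate $t|g'(t)|\succ\log t$ and then integrating; this requires more Hardy-field machinery (closure under differentiation, eventual monotonicity and constant sign of $g'$, and an extension of $\Hardy$ containing $t$ and $\log t$ so that $t g'(t)/\log t$ has a limit in $[0,\infty]$), whereas the paper needs no derivatives at all. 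That said, the two proofs lean on the same underlying dichotomy: both upgrade the failure of a $\succ$ relation to an eventual pointwise upper bound, which is only legitimate because the relevant ratio lies in a Hardy field containing $\log t$ — the paper invokes this for $(g(ct)-g(t))/\log t$ just as you do for $\phi(t)/\log t$, so your "main technical subtlety" is present, if less explicitly, in the original as well. Your endpoint bookkeeping (the constant $c\in\{p,q\}$ fixed by the eventual monotonicity of $|g'|$, and $\log(ct)\sim\log t$) is sound, and the contradiction $|g(t)|=\Oh(\log^2 t)$ correctly negates the hypothesis, so the argument closes.
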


\begin{proof}
It suffices to show that for all $c>1$ one has
\begin{equation}
\label{eq:bosh-crit-katai-2}
|g(ct)-g(t)|\succ\log(t),
\end{equation}
because \eqref{eq:bosh-crit-katai-1} follows quickly from \eqref{eq:bosh-crit-katai-2} by change of variables. Suppose there exists a constant $c>1$ such that \eqref{eq:bosh-crit-katai-2} is not satisfied. 
Remembering that $g(ct)-g(t)$ belongs to a Hardy field, this means that there exist $t_0\in(0,\infty)$ and $M>0$ such that
$$
|g(ct)-g(t)|\leq M\log(t),\qquad\forall t\in[t_0,\infty).
$$
Define $a:=|g(t_0)|$ and $b:=M\log(ct_0)$.
It follows that
\begin{eqnarray*}
|g(c^n t_0)|
&=& \left| g(t_0) + \sum_{j=1}^{n} \left(g(c^jt_0)-g(c^{j-1}t_0)\right)\right|
\\
&\leq& a + \sum_{j=1}^{n} |g(c^jt_0)-g(c^{j-1}t_0)|
\\
&\leq& a + M\sum_{j=1}^{n} \log(c^{j-1}t_0)
\\
&\leq& a+ bn^2.
\end{eqnarray*}
However, $|g(t)|\succ \log^2(t)$ and hence $|g(c^n t_0)|\succ \log^2(c^n t_0)\geq b'n^2$ for some constant $b'$. This is a contradiction.
\end{proof}

\begin{proof}[Proof of \cref{thm:uniform-distribution-2}]
Let $E=\{n_1<n_2<\ldots\}$ be a set that belongs to either $\Dthree{\infty}$ or $\Dfour$ and assume $d(E)$ exists and is positive. Let $\Hardy$ be a Hardy field, let $h\in\Hardy$ and suppose $h$ has polynomial growth and satisfies $|h(t)- r(t)|\succ \log^2(t)$ for all polynomials $r\in\Q[t]$. We want to show that the sequence $\big(h(n_j)\big)_{j\in\N}$ is uniformly distributed mod~$1$.

In light of Weyl's criterion it suffices to show that for all
$k\in\Z\setminus\{0\}$ the averages
$$
\frac{1}{N}\sum_{j=1}^{N}e(k h(n_j))
$$
converge to $0$ as $N\to\infty$.
Since $d(E)$ exits and is positive, this is equivalent to
\begin{equation}
\label{eq:wc-hardy-1}
\lim_{N\to\infty}\frac{1}{N}\sum_{n=1}^{N} \1_E(n) e(k h(n))=0,\qquad\forall k\in\Z\setminus\{0\}.
\end{equation}
In view of \cref{thm:Katai-multipliactive-fibers-2}, to prove \eqref{eq:wc-hardy-1} it suffices to show that
\begin{equation}
\label{eq:wc-hardy-2}
\lim_{N\to\infty}\frac{1}{N}\sum_{n=1}^{N} e(k(h(pn)-h(qn)))=0,
\end{equation}
for all primes $p\neq q$.

We claim that the sequence $(h(pn)-h(qn))_{n\in\N}$ is uniformly distributed mod $1$. Once we have verified this claim, \eqref{eq:wc-hardy-2} follows immediately, because $\int_0^1 e(kx)\, dx=0$.

Note that $h(pt)-h(qt)$ belongs itself to a Hardy field. According to \cref{thm:Boshernitzan94-thm1.3}, $(h(pn)-h(qn))_{n\in\N}$ is uniformly distributed mod $1$ if and only if for all $r\in\Q[t]$,
\begin{equation}
\label{eq:wc-hardy-3}
|h(pt)-h(qt)- r(t)|\succ \log(t).
\end{equation}
Let $r(t)=c_kt^k+\ldots+c_1t+c_0 \in\Q[t]$ be arbitrary. Note that the value of $c_0$ has no influence on \eqref{eq:wc-hardy-3} and we can assume that $c_0=0$. Define a new polynomial $s(t):=b_kt^k+\ldots+b_1t$, where $b_i:=\tfrac{c_i}{p^i-q^i}$, $1\leq i\leq k$. A simple calculation shows that $r(t)=s(pt)-s(qt)$. Define $g(t):=h(t)-s(t)$. Then \eqref{eq:wc-hardy-3} can be written as
\begin{equation}
\label{eq:wc-hardy-4}
|g(pt)-g(qt)|\succ \log(t).
\end{equation}
However, since $s(t)\in Q[t]$, we have that $|g(t)|=|h(t)-s(t)|\succ \log^2(t)$ by our assumption. Therefore \eqref{eq:wc-hardy-4} follows directly \cref{lem:hardy-hilfslemma}. This completes the proof.
\end{proof}

% =========================================================SECTION
\section{Applications to Ergodic Theory and proofs of \cref{cor:M-sets-are-tot.erg.sequences} and \cref{thm:ergodic-sequence}}
\label{sec:single-rec}
\label{sec:ET}
% ================================================================

We start by recalling the following well-known characterizations of ergodic and totally ergodic sequences (see Definitions \ref{def:ergodic-sequence} and \ref{def:tot-erg-sequence}).

%[Blum, J. R.; Cogburn, R. On ergodic sequences of measures.]

\begin{Theorem}\label{thm_erg-seq-char}
Let $(n_j)_{j\in\N}$ be a sequence in $\N$.
\begin{enumerate}
[label=$\text{(\alph{enumi})}$, ref=$\text{(\alph{enumi})}$, leftmargin=*]
\item\label{itm:erg-seq-char}
The sequence $(n_j)_{j\in\N}$ is ergodic if and only if for all $\alpha\in\R\setminus\Z$,
$$
\lim_{N\to\infty}\frac{1}{N}\sum_{j=1}^N e(n_j\alpha) =0.
$$
\item\label{itm:tot-erg-seq-char}
The sequence $(n_j)_{j\in\N}$ is totally ergodic if and only if for all $\alpha\in\R\setminus\Q$,
$$
\lim_{N\to\infty}\frac{1}{N}\sum_{j=1}^N e(n_j\alpha) =0.
$$
\end{enumerate}
\end{Theorem}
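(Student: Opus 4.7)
The plan is to prove both parts of \cref{thm_erg-seq-char} via the spectral theorem. For any measure preserving system $\xbmt$ and any $f\in L^2\xbm$ with $\int f\,d\mu=0$, the spectral theorem yields a finite positive Borel measure $\sigma_f$ on $\T$ such that
$$
\left\|\frac{1}{N}\sum_{j=1}^N T^{n_j}f\right\|_{L^2}^2 = \int_{\T}\left|\frac{1}{N}\sum_{j=1}^N e(n_j\alpha)\right|^2 d\sigma_f(\alpha).
$$
Two spectral facts are needed. First, $T$ is ergodic if and only if $1$ is a simple eigenvalue of the Koopman operator (with eigenspace equal to the constants); equivalently, for every zero-mean $f$ one has $\sigma_f(\{0\})=0$. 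Second, $T$ is totally ergodic if and only if no non-trivial root of unity is an eigenvalue; equivalently, for every zero-mean $f$ one has $\sigma_f(\Q/\Z)=0$.

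For the $(\Leftarrow)$ direction in both parts, I will assume the exponential sum hypothesis and show the averages $\frac{1}{N}\sum_{j=1}^N T^{n_j}f$ converge in $L^2$ to $\int f\,d\mu$. After subtracting the mean, the displayed spectral identity reduces the claim to the pointwise statement that $\frac{1}{N}\sum_{j=1}^N e(n_j\alpha)\to 0$ for $\sigma_f$-almost every $\alpha$; this follows from the hypothesis because $\sigma_f$ is supported off $\{0\}$ in case (a) and off $\Q/\Z$ in case (b). Since the integrand is uniformly bounded by $1$, dominated convergence concludes the argument.

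For the $(\Rightarrow)$ direction I will test the (totally) ergodic sequence property against carefully chosen rotation systems. For irrational $\alpha$, I take $X=\T$ with $Tx=x+\alpha$ (Lebesgue measure) and $f(x)=e(x)$; this system is totally ergodic, $\int f\,d\mu=0$, and $T^n f = e(n\alpha)f$, so the defining property of an (totally) ergodic sequence directly yields $\frac{1}{N}\sum_{j=1}^N e(n_j\alpha)\to 0$. This handles irrational $\alpha$ in both parts. For part (a), it remains to treat rational $\alpha=p/q\in\R\setminus\Z$ with $\gcd(p,q)=1$ and $q\ge2$; here I use the cyclic rotation $X=\Z/q\Z$, $T(k)=k+1$, with normalized counting measure and $f(k)=e(kp/q)$. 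This system is ergodic, $f$ has zero mean, and $T^n f=e(n\alpha)f$, so the same reduction applies.

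The main obstacle is conceptual rather than computational: one must ensure that the spectral-null set of $\sigma_f$ covers exactly the set of $\alpha$'s excluded by the hypothesis, and one must pick test systems that are ergodic (respectively totally ergodic) while realising the desired character $n\mapsto e(n\alpha)$ as a single eigenvalue acting on a zero-mean eigenfunction. Beyond this bookkeeping, no new estimates are required.
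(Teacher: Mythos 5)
Your proof is correct and follows exactly the route the paper indicates: the paper offers no written proof beyond the remark that both parts "follow immediately from the spectral theorem," and your argument (spectral measure identity plus dominated convergence for the sufficiency, and rotations on $\T$ and on $\Z/q\Z$ as test systems for the necessity) is the standard fleshing-out of that remark.
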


(It is not hard to see that both parts of \cref{thm_erg-seq-char} follow immediately from the spectral theorem.)

\cref{thm_erg-seq-char} allows us to derive the following corollary from \cref{thm:Katai-multipliactive-fibers-2}.

\begin{Corollary}
\label{cor:M-sets-are-tot.erg.sequences-d3d4}
Let $E=\{n_1<n_2<\ldots\}$ be a set that belongs to either $\Dthree{\infty}$ or $\Dfour$ and suppose $d(E)$ exists and is positive. Then $(n_j)_{j\in\N}$ is a totally ergodic sequence.
\end{Corollary}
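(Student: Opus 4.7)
The plan is to reduce the claim to a direct application of the generalized Kátai criterion (\cref{thm:Katai-multipliactive-fibers-2}) via the spectral characterization of total ergodicity given in \cref{thm_erg-seq-char}\ref{itm:tot-erg-seq-char}. Concretely, it suffices to verify that for every irrational $\alpha$,
\begin{equation*}
\lim_{N\to\infty}\frac{1}{N}\sum_{j=1}^N e(n_j\alpha)=0.
\end{equation*}

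First, I would define $a(n):=e(n\alpha)$, which is a bounded sequence, and check that it satisfies the Kátai orthogonality hypothesis \eqref{eq:KOC-a}. For distinct primes $p,q$ we have $a(pn)\overline{a(qn)}=e\big((p-q)\alpha\,n\big)$, and since $\alpha\notin\Q$ and $p\neq q$, the number $(p-q)\alpha$ is irrational; hence by Weyl's equidistribution theorem
\begin{equation*}
\sum_{n\leq x}a(pn)\overline{a(qn)}=\sum_{n\leq x}e((p-q)\alpha\,n)=\oh(x).
\end{equation*}

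Next, since $E$ belongs to $\Dthree{\infty}$ or $\Dfour$, \cref{thm:Katai-multipliactive-fibers-2} applies and yields
\begin{equation*}
\sum_{n\leq x}\1_E(n)e(n\alpha)=\oh(x).
\end{equation*}
Finally I would pass from this weighted sum over all integers back to the Cesàro average along $(n_j)$. Because $d(E)$ exists and is positive, we have $n_N\sim N/d(E)$, and for $x=n_N$,
\begin{equation*}
\frac{1}{N}\sum_{j=1}^N e(n_j\alpha)=\frac{1}{N}\sum_{n\leq n_N}\1_E(n)e(n\alpha)=\frac{n_N}{N}\cdot\frac{1}{n_N}\sum_{n\leq n_N}\1_E(n)e(n\alpha)\xrightarrow{N\to\infty}\frac{1}{d(E)}\cdot 0=0.
\end{equation*}
This establishes the Weyl-type condition for every irrational $\alpha$, and \cref{thm_erg-seq-char}\ref{itm:tot-erg-seq-char} then gives total ergodicity of $(n_j)_{j\in\N}$.

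There is no real obstacle here: the work has already been done in \cref{thm:Katai-multipliactive-fibers-2}, and the only nontrivial input is verifying the Kátai hypothesis for $e(n\alpha)$, which is a one-line Weyl computation. The positivity of $d(E)$ is used only to convert the density statement $\sum_{n\leq x}\1_E(n)e(n\alpha)=\oh(x)$ into a statement about averages along $(n_j)$; without positivity the Cesàro average would be of the indeterminate form $0/0$.
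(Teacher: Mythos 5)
Your proposal is correct and follows essentially the same route as the paper: reduce via the spectral characterization of total ergodicity to the Weyl condition for irrational $\alpha$, verify the K\'atai hypothesis for $e(n\alpha)$, apply \cref{thm:Katai-multipliactive-fibers-2}, and use positivity of $d(E)$ to pass between the weighted sum and the average along $(n_j)$. The paper's proof is just a terser version of the same argument.
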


\begin{proof}
It follows from part \ref{itm:tot-erg-seq-char} of \cref{thm_erg-seq-char} that it suffices to show that
\begin{equation}
\label{eqn:tot-erg-proof-1}
\lim_{N\to\infty}\frac{1}{N}\sum_{j=1}^N e(n_j\alpha) =0
\end{equation}
for all irrational $\alpha$. Since $d(E)$ exists and is positive, 
equation \eqref{eqn:tot-erg-proof-1} is equivalent to
\begin{equation}
\label{eqn:tot-erg-proof-2}
\lim_{N\to\infty}\frac{1}{N}\sum_{n=1}^N \1_E(n) e(n\alpha)=0.
\end{equation}
However, \eqref{eqn:tot-erg-proof-2} follows from \cref{thm:Katai-multipliactive-fibers-2} because for any irrational $\alpha$ the sequence $e(n\alpha)$ satisfies \eqref{eq:KOC-a}.
\end{proof}

Note that in view of Propositions \ref{prop:e-sets-1} and \ref{prop:e-sets-2}, \cref{cor:M-sets-are-tot.erg.sequences} follows directly from \cref{cor:M-sets-are-tot.erg.sequences-d3d4}.
We also have the following generalization of \cref{thm:ergodic-sequence}.

\begin{Theorem}
\label{thm:ergodic-sequence-2}
Let $E=\{n_1<n_2<\ldots\}$ be a set that belongs to either $\Dthree{\infty}$ or $\Dfour$. Suppose $h\colon (0,\infty)\to\R$ belongs to a Hardy field $\Hardy$, has polynomial growth and satisfies either $\log^2 t\prec h(t)\prec t$ or $t^k\prec h(t)\prec t^{k+1}$ for some $k\in\N$.
If $d(E)$ exists and is positive then $\big(\lfloor h(n_j)\rfloor\big)_{j\in\N}$ is an ergodic sequence.
\end{Theorem}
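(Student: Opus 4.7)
My starting point is the Weyl-type criterion in part~(a) of \cref{thm_erg-seq-char}: to prove $(\lfloor h(n_j)\rfloor)_{j\in\N}$ is an ergodic sequence it suffices to show that
\begin{equation*}
\frac{1}{N}\sum_{j=1}^{N} e\bigl(\alpha\lfloor h(n_j)\rfloor\bigr) \xrightarrow{N\to\infty} 0\qquad\text{for every }\alpha\in\R\setminus\Z.
\end{equation*}
The key manipulation is the identity $\lfloor x\rfloor = x-\{x\}$, which lets me factor the summand as
$e(\alpha\lfloor h(n_j)\rfloor) = e(\alpha h(n_j))\,G(h(n_j))$,
where $G(x):=e(-\alpha\{x\})$ is a bounded, $1$-periodic, Riemann integrable function on $\T$ with a single jump discontinuity at~$0$ (here I use $\alpha\notin\Z$). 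The plan is to expand $G$ into a truncated Fourier series so that the resulting exponentials combine with $e(\alpha h(n_j))$ into a finite linear combination of averages of the form $\frac{1}{N}\sum_j e(c\,h(n_j))$, which will be controlled by the uniform distribution results already established.

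First I would verify that the hypothesis on $h$, namely $\log^2 t\prec h(t)\prec t$ or $t^k\prec h(t)\prec t^{k+1}$ for some $k\in\N$, is preserved upon multiplication by any nonzero real constant $c$: the function $c\,h$ still lies in the same Hardy field, has polynomial growth, and (splitting by the degree of $r$) satisfies $|c\,h(t)-r(t)|\succ\log^2 t$ for every $r\in\Q[t]$. Hence \cref{thm:uniform-distribution-2} applies to $c\,h$ and yields that $(c\,h(n_j))_{j\in\N}$ is uniformly distributed mod~$1$. By Weyl's criterion this gives
\begin{equation*}
\frac{1}{N}\sum_{j=1}^{N} e(c\,h(n_j))\xrightarrow{N\to\infty}0\qquad\text{for every }c\ne 0,
\end{equation*}
which I will invoke with $c=\alpha+m$ for $m\in\Z$ (note $\alpha+m\ne 0$ since $\alpha\notin\Z$). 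Specializing to $c=1$ also shows that $(h(n_j))_j$ is equidistributed mod~$1$, so $\frac{1}{N}\sum_j F(h(n_j))\to\int_0^1 F(x)\,dx$ for every bounded Riemann integrable $1$-periodic $F$.

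Given $\epsilon>0$, let $\sigma_M G$ denote the $M$-th Ces\`aro--Fej\'er mean of the Fourier series of $G$ and pick $M$ with $\|G-\sigma_M G\|_{L^1(\T)}<\epsilon$ (possible by Fej\'er's theorem in $L^1$, since $G\in L^\infty(\T)\subset L^1(\T)$). Writing $\sigma_M G(x)=\sum_{|m|\le M}d_m\,e(mx)$, the function $|G-\sigma_M G|$ is bounded and Riemann integrable, so equidistribution of $(h(n_j))$ yields
\begin{equation*}
\limsup_{N\to\infty}\frac{1}{N}\sum_{j=1}^{N}\bigl|G(h(n_j))-\sigma_M G(h(n_j))\bigr| = \|G-\sigma_M G\|_{L^1(\T)} < \epsilon.
\end{equation*}
Combining with the factorization gives
\begin{equation*}
\frac{1}{N}\sum_{j=1}^{N} e\bigl(\alpha\lfloor h(n_j)\rfloor\bigr) = \sum_{|m|\le M}d_m\cdot\frac{1}{N}\sum_{j=1}^{N} e\bigl((\alpha+m)h(n_j)\bigr) + \Oh(\epsilon),
\end{equation*}
and each of the finitely many inner averages tends to $0$ as $N\to\infty$ by the previous paragraph. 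Letting $\epsilon\to 0$ finishes the proof.

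The main subtlety I anticipate is the order of the two limits: one must first fix the Fej\'er truncation $M$ large enough that $\|G-\sigma_M G\|_{L^1}<\epsilon$ and only then send $N\to\infty$. This interchange is legitimate precisely because $|G-\sigma_M G|$ is Riemann integrable and $(h(n_j))$ is equidistributed mod~$1$, which allows the $L^1$-approximation on $\T$ to transfer to a Ces\`aro estimate along the subsequence. The only other point requiring comment is the stability of the hypotheses of \cref{thm:uniform-distribution-2} under scaling by any nonzero real constant; this is immediate from the growth dichotomy imposed on $h$, but must be noted in order to invoke equidistribution for each perturbation $(\alpha+m)h$ rather than only for $h$ itself.
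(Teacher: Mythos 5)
Your proof is correct, and it takes a somewhat different middle route from the paper's. Both arguments start from part (a) of \cref{thm_erg-seq-char}, both use $\lfloor x\rfloor=x-\{x\}$, and both ultimately reduce everything to the Weyl sums $\frac{1}{N}\sum_{j\le N}e\bigl((\alpha+m)h(n_j)\bigr)$ controlled by \cref{thm:uniform-distribution-2}; where you diverge is in how the sawtooth factor is handled. The paper packages $e(\alpha\lfloor h(n)\rfloor)$ as a Riemann integrable function $g(x,y)=e(x-\alpha\{y\})$ on $\T^2$, forms the closed subgroup $H=\overline{\{(\alpha t,t)\}}$, checks $\int g\,d\mu_H=0$ (distinguishing rational from irrational $\alpha$), and proves equidistribution of $\bigl(\alpha h(n_j),h(n_j)\bigr)$ in $H$ by verifying the Weyl criterion for all characters $(\ell,m)$ with $\alpha\ell+m\ne 0$. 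You instead stay on the one-dimensional torus, approximating $G(x)=e(-\alpha\{x\})$ by its Fej\'er means in $L^1(\T)$ and transferring the $L^1$ error to a Ces\`aro error along $(h(n_j))$ via equidistribution of $h(n_j)$ mod $1$ applied to the Riemann integrable function $|G-\sigma_M G|$. Your version is more economical: it needs only the frequencies $\alpha+m$ (i.e.\ $\ell=1$) rather than the full character group of $H$, avoids the case split on the rationality of $\alpha$, and makes the limit interchange explicit, whereas the paper's two-torus formulation hides the same Riemann-integrability issue in the sentence ``since $\int g\,d\mu_H=0$ and $g$ is Riemann integrable, it suffices to show\dots''. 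You correctly flag the two points that genuinely need checking: that $\|G-\sigma_M G\|_{L^1}<\epsilon$ can be fixed before letting $N\to\infty$, and that the growth dichotomy on $h$ forces $|c\,h(t)-r(t)|\succ\log^2 t$ for every nonzero real $c$ and every $r\in\Q[t]$, so that \cref{thm:uniform-distribution-2} applies to each $(\alpha+m)h$ (strictly, $c\,h$ lies in the Hardy field generated by $\Hardy$ and the constant $c$, the same harmless extension the paper makes implicitly).
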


\begin{proof}[Proof of \cref{thm:ergodic-sequence-2} (cf.\ {\cite[Lemma 5.12]{BK09}})]
In view of \cref{thm_erg-seq-char}, part \ref{itm:erg-seq-char}, it suffices to show that for every $\alpha\in \R\setminus\Z$ we have
$$
\lim_{N\to\infty}\frac{1}{N}\sum_{j=1}^N e\big(\lfloor h(n_j)\rfloor\alpha\big)=0.
$$
We have $\lfloor h(n)\rfloor=h(n)-\{h(n)\}$.
Therefore $e\big(\lfloor h(n)\rfloor\alpha\big)=g\big(\alpha h(n_j),h(n_j)\big)$, where $g\colon \R^2\to\C$ is the function $g(x,y)=e\big(x-\alpha\{y\}\big)$. Note that $g$ is $1$-periodic and hence can be viewed as a function from $\T^2$ to $\C$. It thus suffices to show that
\begin{equation}
\label{eqn:mos-2}
\lim_{N\to\infty}\frac{1}{N}\sum_{j=1}^N g\big(\alpha h(n_j),h(n_j)\big)=0.
\end{equation}

Let $H:=\overline{\{(\alpha t \bmod 1, t\bmod 1): t\in\R\}}$. Note that $H$ is a closed subgroup of $\T^2$ and one has $H=\T^2$ if $\alpha$ is irrational and $H\subsetneq \T^2$ if $\alpha$ is rational.

Let $\mu_H$ denote the (normalized) Haar measure on $H$.
We claim that  $\int g\, d \mu_H=0$.
If $H=\T^2$ then $\int g\, d \mu_H=\int \left(\int g(x,y)\, dx\right)\, dy=\int 0\, dy=0$. If $H\subsetneq \T^2$, then $\alpha$ must be rational and hence
$$
\overline{\{(\alpha t \bmod 1, t\bmod 1): t\in\R\}}=\{(\alpha t \bmod 1, t\bmod 1): t\in\R\}.
$$
Therefore,
\begin{equation}
\label{eqn:mos-1}
\lim_{T\to\infty}\frac{1}{T}\int_{0}^T f(\alpha t,t)\, dt =\int f\, d\mu_H, 
\end{equation}
for all continuous $f\colon H\to\C$. (Indeed, the left hand side of \eqref{eqn:mos-1} describes an invariant probability measure on $H$ and any invariant probability measure must coincide with $\mu_H$, by uniqueness of Haar measures.)
Thus, we have
\begin{eqnarray*}
\int g\, d\mu_H
&=& \lim_{T\to\infty}\frac{1}{T}\int_{0}^T e(\alpha t-\alpha\{t\})\, dt
\\
&=&
\lim_{T\to\infty}\frac{1}{T}\int_{0}^T e(\alpha \lfloor t\rfloor)\, dt~=~0.
\end{eqnarray*}
Since $\int g\, d\mu_H=0$ and $g$ is Riemann integrable, to show \eqref{eqn:mos-2} it suffices to show that the sequence $\big(\alpha h(n_j),h(n_j)\big)_{j\in\N}$ is uniformly distributed in $H$.
Since any group character of $H$ comes from a character on $\T^2$ and the non-trivial characters of $H$ are described by $\{(x,y)\mapsto e(\ell x+my):\ell,m\in\Z,~\alpha \ell+m\neq 0\}$, it follows from Weyl's equdistribution criterion that $\big(\alpha h(n_j),h(n_j)\big)_{j\in\N}$ is uniformly distributed in $H$ if and only if for all $(\ell,m)\in\Z^2$ that satisfy $\alpha \ell+m\neq 0$ one has
\begin{equation}\label{eq_proof_uniformdistribution}
\lim_{N\to\infty}\frac{1}{N}\sum_{j=1}^N e\big((\ell\alpha+m)h(n_j)\big)=0.
\end{equation}
Since $h\in\Hardy$ has polynomial growth and satisfies $n^{k-1}\prec h(t)\prec n^k$, we conclude that $(\ell\alpha+m)h(n)$ also belongs to $\Hardy$, has polynomial growth and satisfies $|(\ell\alpha+m)h(t)- r(t)|\succ \log^2(t)$ for all $r\in\Q[t]$.
It follows from \cref{thm:uniform-distribution-2} that the sequence $\big((\ell\alpha+m)h(n_j)\big)_{j\in\N}$ is uniformly distributed mod $1$. 
This implies that
$$
\lim_{N\to\infty}\frac{1}{N}\sum_{j=1}^N e\big((\ell\alpha+m)h(n_j)\big)=\int_{0}^1 e(x)\, dx=0
$$
and we conclude that \eqref{eq_proof_uniformdistribution} holds.
\end{proof}

% ====================================================BIBLIOGRAPHY
\bibliographystyle{siam}

\providecommand{\noopsort}[1]{} %necessary for the bib file to work

\allowdisplaybreaks
\small
\bibliography{BibMF}
%\begin{thebibliography}{9}
%\bibitem{MO-question}
%\end{thebibliography}
% ================================================================

% ====================================================AFFILIATION
\bigskip
\footnotesize
\noindent
Vitaly Bergelson\\
\textsc{Department of Mathematics, Ohio State University, Columbus, OH 43210, USA}\par\nopagebreak
\noindent
\textit{E-mail address:}
\href{mailto:vitaly@math.ohio-state.edu}
{\texttt{vitaly@math.ohio-state.edu}}

\medskip

\noindent
J.\ Ku\l aga-Przymus\\
\textsc{Aix-Marseille Universit\'e, Centrale Marseille, CNRS, Institut de Math\'ematiques de Marseille, UMR7373, 39 Rue F.\ Joliot Curie 13453, Marseille, France}\\
\textsc{Faculty of Mathematics and Computer Science, Nicolaus Copernicus University, Chopina 12/18, 87-100 Toru\'{n}, Poland}\par\nopagebreak
\noindent
\textit{E-mail address:}
\href{mailto:joanna.kulaga@gmail.com}
{\texttt{joanna.kulaga@gmail.com}}

\medskip

\noindent
Mariusz Lema\'nczyk\\
\textsc{Faculty of Mathematics and Computer Science, Nicolaus Copernicus University, Chopina 12/18, 87-100 Toru\'{n}, Poland}\par\nopagebreak
\noindent
\textit{E-mail address:} 
\href{mailto:mlem@mat.umk.pl}
{\texttt{mlem@mat.umk.pl}}

\medskip

\noindent
Florian K.\ Richter\\
\textsc{Department of Mathematics, Ohio State University, Columbus, OH 43210, USA}\par\nopagebreak
\noindent
\textit{E-mail address:}
\href{mailto:richter.109@osu.edu}
{\texttt{richter.109@osu.edu}}
% ================================================================

% ==================================================END DOCUMENT
\end{document}